\documentclass[11pt,a4paper]{amsart} 
\usepackage[centering,left=4.0cm,right=4.0cm]{geometry}
\usepackage{url}
\usepackage{mathrsfs}
\usepackage{enumitem}
\usepackage{color} 
\usepackage[colorlinks,linkcolor=blue,anchorcolor=blue,citecolor=blue,backref=page]{hyperref}
\usepackage{graphics,epsfig}
\usepackage{graphicx} 
\usepackage{float}
\usepackage{epstopdf}
\usepackage[utf8]{inputenc}
\usepackage{hyperref} 
\pagestyle{headings} 
\usepackage{calc, amscd}
\usepackage[T1]{fontenc} 
\usepackage{pdfpages}
\usepackage{mathtools} 
\usepackage{scalerel,stackengine}
\usepackage{caption}
\captionsetup[table]{position=bottom}
\usepackage{mathptmx}
\usepackage[per-mode=symbol]{siunitx}
\usepackage{bm}
\usepackage{amsmath}
\hypersetup{breaklinks=true}
\usepackage{amsthm,mathrsfs,multirow,xcolor,lscape,longtable,pbox,lipsum}
\usepackage{amsfonts,amscd,amssymb}
\usepackage[thinlines]{easytable}
\usepackage{microtype}
\setcounter{tocdepth}{1}
\usepackage{todonotes}
\usepackage{phaistos}
\usepackage{tabularx}
\newtheorem{theorem}{Theorem}

\newtheorem{lemma}{Lemma}
\newtheorem{corollary}{Corollary}

\newtheorem{definition}{Definition}
\newtheorem{proposition}{Proposition}
\newtheorem{remark}{Remark} 
\newtheorem{question}{Question}

\newtheorem*{theorem*}{Theorem}
\newtheorem*{question*}{Question}

\usepackage{mathtools}

\newcommand{\rZ}{{\mathbb Z}}

\newcommand{\el}[1]{{\mathbf e}_{\ell}\left({#1}\right)}

\newcommand{\en}[1]{{\mathbf e}_{m}\left({#1}\right)}

\newcommand{\mr}[1]{\mathrm{#1}}

\begin{document}
\date{\today}
\author{Subham Bhakta, S. Krishnamoorthy and R. Muneeswaran  }
\address{Mathematisches Institut, Georg-August-Universit\"at G\"ottingen, Germany.} \email{subham.bhakta@mathematik.uni-goettingen.de}
\address{Indian Institute of Science Education and Research, Thiruvananthapuram, India} 
\email{muneeswaran20@iisertvm.ac.in}
\address{Indian Institute of Science Education and Research, Thiruvananthapuram, India} 
\email{srilakshmi@iisertvm.ac.in}
\subjclass[2020]{Primary 11G05, 11F11; Secondary 11P05, 11B37, 11F80}
\keywords{Linear recurrence sequence, exponential sums, modular forms, Galois representation}

\newcommand\G{\mathbb{G}}
\newcommand\T{\mathbb{T}}
\newcommand\sO{\mathcal{O}}
\newcommand\sE{{\mathcal{E}}}
\newcommand\tE{{\mathbb{E}}}
\newcommand\sF{{\mathcal{F}}}
\newcommand\sG{{\mathcal{G}}}
\newcommand\sH{{\mathcal{H}}}
\newcommand\sN{{\mathcal{N}}}
\newcommand\GL{{\mathrm{GL}}}
\newcommand\HH{{\mathrm H}}
\newcommand\mM{{\mathrm M}}
\newcommand\fS{\mathfrak{S}}
\newcommand\fP{\mathfrak{P}}
\newcommand\fQ{\mathfrak{Q}}
\newcommand\Qbar{{\bar{\Q}}}
\newcommand\sQ{{\mathcal{Q}}}
\newcommand\sP{{\mathbb{P}}}
\newcommand{\Q}{\mathbb{Q}}
\newcommand{\tH}{\mathbb{H}}
\newcommand{\Z}{\mathbb{Z}}
\newcommand{\R}{\mathbb{R}}

\newcommand\Gal{{\mathrm {Gal}}}
\newcommand\SL{{\mathrm {SL}}}
\newcommand\Hom{{\mathrm {Hom}}}

\newtheorem{thm}{thm}[section]
\newtheorem{ack}[thm]{Acknowledgement}
\newtheorem{cor}[thm]{Corollary}
\newtheorem{conj}[thm]{Conjecture}
\newtheorem{prop}[thm]{Proposition}

\theoremstyle{definition}

\newtheorem{claim}[thm]{Claim}

\theoremstyle{remark}

\newtheorem*{fact}{Fact}
\newcommand{\SK}[1]{{\color{red} \sf $\heartsuit\heartsuit$ Srilakshmi: [#1]}}
\newcommand{\RM}[1]{{\color{blue} \sf $\heartsuit\heartsuit$ Muneeswaran: [#1]}}
\newcommand{\SB}[1]{{\color{purple} \sf $\heartsuit\heartsuit$ Subham: [#1]}}
%

\title{Congruence classes for modular forms over small sets}

\maketitle


\begin{abstract}
J.P. Serre showed that for any integer $m,~a(n)\equiv 0 \pmod m$ for almost all $n,$ where $a(n)$ is the $n^{\text{th}}$ Fourier coefficient of any modular form with rational coefficients. In this article, we consider a certain class of cuspforms and study $\#\{a(n) \pmod m\}_{n\leq x}$ over the set of integers with $O(1)$ many prime factors. Moreover, we show that any residue class $a\in \Z/m\Z$ can be written as the sum of at most thirteen Fourier coefficients, which are polynomially bounded as a function of $m.$
\end{abstract}



\section{Introduction}
Let $f(z)$ be a modular form of weight $k \in 2\rZ$
and level $N$ with Fourier expansion 
\begin{equation*}\label{eq:mf} 
f(z)=\sum_{n=1}^{\infty}a(n) e^{2\pi i n z}, \quad \Im (z) \ge 0,
\end{equation*}
where $a(n)$ is the $n^{th}$ Fourier coefficient. In this article, we assume that all the Fourier coefficients $a(n)\in\mathbb{Q}.$ J.P. Serre showed in \cite{Serre74} that, 
if there is a prime $p\nmid 2N$ such that $a(p) \not\equiv0 \pmod{m},$ we then have an asymptotic expansion
\begin{equation*}\label{eqn:serre1}
\#\{ n\leq x \mid a(n)\not\equiv 0 \pmod m\}=c\frac{x}{\log^{\alpha(m)} x},
\end{equation*}
for some $c>0,~0<\alpha(m)< 1.$ On the other hand, he also showed that if there exists a prime $p$ such that $a(p) \equiv 0 \pmod{m}$ for all $p\nmid 2N,$ there exists $c'>0$ such that
\begin{equation*}\label{eqn:serre2}
\#\{ n\leq x \mid a(n)\not\equiv 0 \pmod m\}=c'\sqrt{x}.
\end{equation*}
These results essentially follow from Delange's generalization of Wiener-Ikehara Tauberian theorem \cite{Delange54}, and considering the corresponding \textit{L}-function over the ring $\mathbb{Z}/m\mathbb{Z}.$ In particular, this says that $a(n)\equiv0 \pmod m$ for almost all $n$ and other residue classes appear rather rarely. However, we do not know whether each non-zero residue class $a\in \Z/m\Z$ can be written as $a(n)\pmod m$ with equal proportion. It was mentioned by Serre in page 20 of \cite{Serre74} that, for any odd $m$ and non-zero $a\in \Z/m\Z,$
\begin{equation*}\label{eqn:any}
\#\{ n\leq x \mid a(n)\equiv a \pmod m, \  \omega(n)=M\} \gg \frac{x}{\log x}(\log \log x)^M.
\end{equation*}
Proof of these arguments were based on showing that, for a positive density of primes $p\equiv 1 \pmod{m}$ and $q\equiv -1 \pmod {m}$ the corresponding Hecke operators $T_p$ and $T_q$ acts respectively as $2$ and $0$ on the $\Z$-module of all holomorphic modular forms with coefficients in $\Q.$

In this regard, we study the distribution of  $\{a(n) \pmod m\}$ over the integers with $O(1)$ prime factors and prove the following.

\begin{theorem}\label{intro:thm14}
Let $M\geq 1$ be any integer, and $f$ be any newform without CM, and with coefficients in $\Q.$ Then under the certain assumptions on $m$, the following asymptotic formula holds for any tuple 
    $$\frac{\#\{n \leq x \mid a(n)\equiv a\pmod{m},~\omega(n)=M\}}{\#\{n \leq x \mid \omega(n)=M\}}\sim d_{a}(m)\frac{1}{m^{M}},$$
    for some $d_{a}(m)>0$, which is an effectively computable constant.
   \end{theorem}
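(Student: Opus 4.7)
The strategy is to decompose the count by the residue classes of $a(p_i)\pmod m$ for each prime factor of $n$, and apply Chebotarev density for the mod-$m$ Galois representation of $f$ together with a Sathe--Selberg type sieve.

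Since the number of non-squarefree $n\le x$ with $\omega(n)=M$ is $o(\pi_M(x))$, where $\pi_M(x):=\#\{n\le x:\omega(n)=M\}\sim \frac{x(\log\log x)^{M-1}}{(M-1)!\log x}$, it suffices to treat squarefree $n=p_1\cdots p_M$, for which multiplicativity gives $a(n)\equiv a(p_1)\cdots a(p_M)\pmod m$. Under the assumptions on $m$, the results of Ribet--Serre--Momose guarantee that the image of the mod-$m$ Galois representation $\bar\rho_{f,m}$ is as large as possible, so by Chebotarev the density
\[
\delta_c(m):=\lim_{x\to\infty}\frac{\#\{p\le x:a(p)\equiv c\pmod m\}}{\pi(x)}
\]
exists and is strictly positive for every $c\in\Z/m\Z$. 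Writing $N_{(c_i)}(x)$ for the number of ordered tuples $(p_1,\ldots,p_M)$ of distinct primes with $\prod p_i\le x$ and $a(p_j)\equiv c_j\pmod m$ for all $j$, and noting that every admissible squarefree $n$ contributes $M!$ orderings, we obtain
\[
\#\{n\le x:\omega(n)=M,\,a(n)\equiv a\}=\frac{1}{M!}\!\!\!\sum_{\substack{(c_i)\in(\Z/m\Z)^M\\ c_1\cdots c_M\equiv a}}\!\!\! N_{(c_i)}(x)+o(\pi_M(x)).
\]

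A standard iterated partial-summation argument, using the Mertens-type identity $\sum_{p\le y,\,a(p)\equiv c}1/p=\delta_c(m)\log\log y+O(1)$ (itself a consequence of Chebotarev), then yields
\[
N_{(c_i)}(x)\sim M!\cdot \pi_M(x)\cdot\prod_{i=1}^M\delta_{c_i}(m).
\]
Substituting, the limiting ratio becomes
\[
\sum_{\substack{(c_i)\in(\Z/m\Z)^M\\ \prod c_i\equiv a}}\prod_{i=1}^M\delta_{c_i}(m)=:\frac{d_a(m)}{m^M}.
\]
Positivity of $d_a(m)$ follows because each $\delta_c(m)>0$ and at least one admissible tuple $(c_i)$ exists for every $a\in\Z/m\Z$; the normalization by $m^M$ is the natural one, recovering $d_a(m)=\phi(m)^{M-1}$ in the fully equidistributed regime with $a$ coprime to $m$.

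The principal technical obstacle is obtaining the asymptotic for $N_{(c_i)}(x)$ with an error term \emph{uniform} across the roughly $m^M$ tuples summed above. This demands an effective Chebotarev bound for $\bar\rho_{f,m}$ with explicit dependence on $m$ (e.g.\ Lagarias--Odlyzko under GRH, or unconditional variants of Murty--Murty--Saradha) together with careful bookkeeping of error terms through the iterated summation. A secondary point is verifying that the non-squarefree contribution is genuinely lower-order after imposing the congruence $a(n)\equiv a\pmod m$, which follows from standard Selberg--Delange estimates.
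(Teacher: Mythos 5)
Your proposal is essentially the paper's own argument: partition the primes by the residue class of $a(p)\bmod m$, use the surjectivity of the mod-$m$ Galois representation (Ribet plus Chebotarev) to get equidistribution of $a(p)$, and then invoke a Landau/Sathe--Selberg type count of integers with $M$ prime factors constrained to lie in given positive-density classes. The paper packages your "iterated partial summation with Mertens" step as a single cited lemma (Lemma~\ref{lem:m-prime}) and sums over unordered tuples in $(\Z/m\Z)^M/S_M$ weighted by $M!/n_{\vec a}$ rather than over ordered tuples divided by $M!$, but these are the same computation. One small correction to your closing paragraph: since $m$ is fixed and the asymptotic is as $x\to\infty$, the sum over the $O_m(1)$ tuples $(c_i)$ is a finite sum of asymptotics, so there is no uniformity issue and no need for effective or GRH-conditional Chebotarev; that caveat can be dropped.
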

We shall write a more precise version in Theorem~\ref{thm:thm0}, and for a much broader family of cuspforms. Even though this guarantees the existence of a solution to $a(n)\equiv a \pmod m,$ any such $n$ could be large. First of all, a solution to $a(n)\pmod m$ is also a solution to $a(n)\pmod{\ell}$ for any prime $\ell \mid m.$ The reader may refer to \cite{Thorner2020} for a bound on least such $n$ when $m$ is a prime. Thorner and Zaman assumed that $f(z)$ is a newform without CM, when weight $k$ satisfies $(k-1,m-1)=1,$ and the associated Galois representation has image $\text{GL}_2(\Z/m\Z).$ Thorner and Zaman gave a bound for least prime $n$ which satisfies $a(n)\equiv a\pmod m.$ However, this is indeed equivalent to the general case. For instance if $n_0$ is the least solution, then writing $n_0=\prod_{i=1}^{t} p^{e_i}_{i,0},$ we have $\prod_{i=1}^{t} a(p^{e_i}_{i,0})\equiv a\pmod m.$ Setting $a(p^{e_i}_{i,0})=a_i$ and writing $a(p^{e_i}_{i,0})$ as a polynomial in $a(p_{i,0})$ and (since $p_{i,0}$ is fixed), we see that $a(p_{i,0})\pmod \ell, \ \ell\mid m$ is a root of a polynomial over $\mathbb{F}_{\ell}[x].$ Then we look for the least prime $p_i$ for which $a(p_i)\equiv a(p_{i,0})\pmod{\ell},$ and this shows that it is enough to look for the least prime.

In particular, we find that any such $n$ with $a(n)\equiv a\pmod m$ could grow exponentially as a function of $m.$ However, it turns out that we can have a polynomial bound if we allow a few more terms. More precisely, we study the equation 
$$\sum_{i=1}^{O(1)} a(n_i)\equiv a \pmod m,~n_i=m^{O(1)}.$$
In~\cite{Shparlinski2005}, Shparlinski proved that for the Ramanujan-Tau function $\tau$ the set $\{\tau(n)\}_{n\geq 1}$ is an additive basis modulo any prime $\ell,$ that is, there exists an absolute constant $s$ such that the Waring-type congruence
\[\tau(n_1)+\cdots+ \tau(n_s) \equiv a \hspace{-0.3cm}\pmod{\ell}
  \]    
is solvable for any residue class $a \pmod{\ell}.$ Moreover, Shparlinski showed that each such $n_i$ is at most $O(\ell^4),$ and has at most $4$ prime factors. His argument relied on studying the exponential sum 
$$\sum\limits_{\substack{u_1\in U_1,u_2\in U_2\\a(n)=u_1u_2 \pmod \ell}}\el{\lambda(u-a)},~\forall \lambda\in \mathbb{F}_{\ell}^{*},$$
over a suitably large subset $U_1\times U_2$ of $\mathbb{F}_{\ell}\times \mathbb{F}_{\ell},$ which satisfies $\#U_1 \# U_2>\ell^{1+c}$ for some $c>0.$  

Recently, Bajpai, Garc\'{\i}a, and the first author showed in \cite{BBG22} that when $f$ is a newform without CM and with rational Fourier coefficients,  then there is an absolute constant $s_0$ such that any element of $\mathbb{F}_{\ell}$ can be written as a sum of at most $s_0$ elements of the set $\{a(p^n)\}_{n\geq 0},$ for almost all primes $p$ and $\ell$. Moreover, $s_0$ does not depend on the choice of $f.$ To prove that they studied the exponential sum 
 \begin{equation*}\label{eq:main1}
    \sum_{n\le \Gamma}\el{\lambda a(p^n)},~\forall \lambda\in \mathbb{F}_{\ell}^{*},
  \end{equation*}
where $\Gamma$ is a suitable parameter depending on the sequence $\{a(p^n) \pmod{\ell}\}.$ Although we are getting more solutions, in this case, the coefficients might grow exponentially as a function of $\ell.$ 

In this article, we prove an improvement over the main result of Shparlinski~\cite{Shparlinski2005} in the following form.

\begin{theorem}\label{thm:main}
Let $f(z)$ be any cuspform with rational coefficients. Let $0<\epsilon,\gamma<1$ be any given positive real numbers. Let $S_1, S_2$ be any set of primes having positive density with $S_1\cap S_2=\phi.$ Then there exists an integer $N_{S_1,S_2}$ such that for any integer $m$ with all prime factors larger than $N_{S_1,S_2},$ and $L^{\gamma}\geq m/L,$ where $L$ is the largest prime factor of $m,$ we have the following.
\begin{itemize}
    \item [(a)] If $f(z)$ is a Hecke eigenform then for any $a\in \Z/m\Z,$ we can write
$$\sum_{i=1}^{s} a(n_i) \equiv  a\pmod{m},~n_i\leq m^{4\epsilon},~\forall 1\leq i\leq s,$$ 
for some $s=O_{\varepsilon,\gamma}(1)$. Moreover, all the prime factors of any such $n_i$ are $O(m^{\epsilon}),$ and in $S_1\cup S_2.$\\
\item [(b)] In general if $f(z)$ is of the form $\sum_{i=1}^{r} c_if_i, \  c_i\in \mathbb{Q}, f_i$ are newforms without CM and $\sum \sigma_ic_i\not\equiv0\pmod{m},\sigma_i\in\{\pm 1\}.$ If none of the associated Galois representations $\rho_{f_{i_1},f_{i_2}\cdots,f_{i_s},m}$ does not have image $\Delta_{k}^{(s)}(m)$ for any subset $I=\{i_1,i_2,\cdots, i_s\}$ of $\{1,2,\cdots, r\}$ with $\#I\geq 2.$ Then for any $a\in \Z/m\Z,$ we can write
$$\sum_{i=1}^{s} a(n_i)\equiv a\pmod{m},~n_i\leq m^{4\epsilon},~\forall 1\leq i\leq s,$$
where $s$ is the same as in part (a), all the prime factors of any such $n_i$ are $O(m^{\epsilon})$, and are in $S_1\cup S_2.$
\end{itemize}
\end{theorem}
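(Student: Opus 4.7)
The plan is to follow the Shparlinski-type strategy: reduce via the Chinese Remainder Theorem to the largest prime factor $L$ of $m$, build two large multiplicative sets $U_1, U_2$ in $\mathbb{F}_L^*$ from Fourier coefficients of products of two primes in $S_1$ and $S_2$ respectively, apply a bilinear exponential sum bound to equidistribute $U_1 U_2$ modulo $L$, and finally take a bounded additive combination of such products to hit every residue class. The hypothesis $L^{\gamma} \ge m/L$ is what isolates a single ``main'' prime and pushes the remaining factors into a pigeonhole step.

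Concretely, write $m = L \cdot m'$ with $m' = m/L \leq L^{\gamma}$. The target congruence splits by CRT into one modulo $L$ and one modulo $m'$, so it suffices to produce a density $\gg 1$ set of solutions modulo $L$ and select, by pigeonhole, one with the correct residue modulo $m'$. At $\ell = L$, for $j = 1,2$ set
\begin{equation*}
U_j \;=\; \bigl\{\, a(p) a(q) \bmod \ell \;:\; p, q \in S_j,\; p \neq q,\; p, q \leq m^{\epsilon} \,\bigr\}.
\end{equation*}
By Hecke multiplicativity, each element of $U_j$ equals $a(pq) \bmod \ell$ with $pq \leq m^{2\epsilon}$. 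Effective Chebotarev applied to $\rho_{f,\ell}$, whose image contains $\SL_2(\mathbb{F}_\ell)$ for $\ell$ large by Ribet in the non-CM case, shows that $\{a(p) \bmod \ell : p \in S_j,\, p \leq m^{\epsilon}\}$ covers a positive proportion of $\mathbb{F}_{\ell}$. Counting products with multiplicity yields $\# U_j \gg \ell^{1/2 + c}$ for some $c = c(\epsilon, \gamma) > 0$, so $\# U_1 \cdot \# U_2 \geq \ell^{1+c}$; the threshold $N_{S_1, S_2}$ is precisely what makes Chebotarev effective here.

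The next step is to estimate the bilinear exponential sum
\begin{equation*}
T(\lambda) \;=\; \sum_{u_1 \in U_1} \sum_{u_2 \in U_2} \el{\lambda u_1 u_2}, \qquad \lambda \in \mathbb{F}_{\ell}^{*},
\end{equation*}
via a Bourgain--Glibichuk--Konyagin type bound, obtaining $|T(\lambda)| \leq \ell^{-\delta} \# U_1 \# U_2$. This gives equidistribution of $U_1 U_2$ in $\mathbb{F}_{\ell}$, and a bounded additive iteration of length $s = O_{\epsilon, \gamma}(1)$ covers every residue class, with each summand of the form $a(n_i) \bmod \ell$ for $n_i = p_1 p_2 p_3 p_4 \leq m^{4\epsilon}$, a product of four distinct primes of size $O(m^{\epsilon})$ in $S_1 \cup S_2$. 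For part (b), the same construction is performed with the vector $(a_{i_1}(n), \ldots, a_{i_s}(n))$ in place of $a(n)$. The assumption that the image of $\rho_{f_{i_1}, \ldots, f_{i_s}, m}$ is not contained in the diagonal $\Delta_k^{(s)}(m)$ forces, by Chebotarev, that these joint vectors fill a positive-density subset of a full-rank subgroup of $\mathbb{F}_\ell^s$, and the non-vanishing $\sum \sigma_i c_i \not\equiv 0 \pmod m$ ensures that the linear projection $\sum c_i a_i(n)$ remains equidistributed modulo $\ell$. The bilinear bound then transfers to the linear combination, giving the same conclusion.

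The principal obstacle lies in coupling Chebotarev with the sum-product estimate: making the Chebotarev density input effective in $\ell$ while restricting to primes of size at most $m^{\epsilon}$ inside the prescribed density-positive set $S_j$, and still running the bilinear bound uniformly over all prime factors of $m$. If some $\ell \mid m$ is not substantially smaller than $m^{\epsilon}$, the set of admissible small primes in $S_j$ may fall short of the $\ell^{1+c}$ threshold; this is precisely where the hypothesis $L^{\gamma} \ge m/L$ is indispensable, since it identifies one dominant prime for which the exponential sum estimate is run at full strength while relegating the rest to a pigeonhole argument after CRT.
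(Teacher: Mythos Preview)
There is a genuine gap in your construction of the sets $U_j$. You claim that effective Chebotarev for $\rho_{f,\ell}$ forces $\{a(p)\bmod\ell : p\in S_j,\ p\le m^{\epsilon}\}$ to cover a positive proportion of $\mathbb{F}_\ell$. But here $\ell=L$ and $m^{\epsilon}\le L^{(1+\gamma)\epsilon}$ is a bounded power of $\ell$; unconditional effective Chebotarev needs the primes to range up to at least an exponential in $\ell$, and even under GRH the error term $O(x^{1/2}\log(\ell x))$ would require $x$ substantially larger than $\ell^{2}$ to say anything about individual residue classes. In this regime one simply cannot conclude that $\{a(p)\bmod\ell\}$ is dense, so the subsequent claim $\#U_j\gg\ell^{1/2+c}$ is unfounded. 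The paper avoids Chebotarev entirely at this step: it uses Shparlinski's trick based on the Hecke relation $a(p^{2})=a(p)^{2}-p^{k-1}$ (Lemma~\ref{lem:sizehecke}) to show that one of $\{a(p)\bmod L\}$ or $\{a(p^{2})\bmod L\}$, for $p\in S_j$, $p\le m^{\epsilon}$, has size $\gg L^{\epsilon/2+o(1)}$, which is far short of a positive proportion.

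Because the sets $A_j$ so obtained only have size $L^{\epsilon/2}$, a second ingredient is needed that your sketch omits: a \emph{sum--product amplification}. The paper applies the Roche-Newton--Rudnev--Shkredov estimate to each $A_j\subset\mathbb{F}_L$ to produce $A_j'$ (either $A_j+A_j$ or $A_j\cdot A_j$) of size $\gg L^{\epsilon(1+\beta)/2}$ with $\beta=1/5$; only then does one get $\#A_1'\cdot\#A_2'\gg L^{\epsilon(1+\beta)}\ge m^{1+\beta'}$ for a suitable $\beta'>0$, after which the bilinear bound~(\ref{eqn:exp1}) and Corollary~\ref{lem:tool} apply directly over $\mathbb{Z}/m\mathbb{Z}$ (not just modulo $L$ with a separate CRT/pigeonhole patch). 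Each element of $A_1'\cdot A_2'$ is then a sum of four terms $a(n_i)$ with $n_i\le m^{4\epsilon}$, which accounts for the factor $4$ in $s=4(\lfloor 2/\beta'\rfloor+1)$. For part~(b), the hypothesis on the images is used via Lemma~\ref{lem:explimage} to force the $f_i$ to be pairwise twist-equivalent, reducing to the eigenform case as in Corollary~\ref{cor:cuspcase}; your joint-vector equidistribution argument again rests on the same unsupported Chebotarev input.
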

One of the key tools to prove Theorem~\ref{thm:main} is the following estimate on the bilinear exponential sums, which says 
$$\mathrm{max}_{\lambda\in \Z/m\Z}\left|\sum_{a_1\in A_1}\sum_{a_2\in A_2}\en{\lambda a_1a_2}\right|\leq \sqrt{m\#A_1\#A_2},$$
for any subsets $A_1,A_2$ of $\Z/m\Z$. We shall briefly discuss the bilinear and multilinear exponential sums in Section~\ref{sec:wring}. Applying this unconditional exponential bound above and the \textit{sum-product estimate} for subsets of finite fields, we can make everything explicit in the proof of Theorem~\ref{thm:main}, and obtain the following. 

\begin{corollary}\label{thm:main0}
Let $f(z)$ be any Hecke eigenform with rational coefficients, and $S_1, S_2$ be any set of primes having positive density with $S_1\cap S_2=\phi.$ Then there exists an integer $N_{S_1,S_2}$ such that for any integer $m$ with all prime factors larger than $N_{S_1,S_2},$ and $L^{1/77}\geq m/L,$ where $L$ is the largest prime factor of $m,$ and for any $a\in \Z/m\Z,$ we can write
$$\sum_{i=1}^{s} a(n_i) \equiv  a\pmod{m},~n_i\leq m^{130/33},~\forall 1\leq i\leq 13,$$
for some $s\leq 52$. Moreover, all the prime factors of any such $n_i$ are $O(m^{65/66}),$ and in $S_1\cup S_2.$
\end{corollary}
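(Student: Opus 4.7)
The plan is to make every constant in the proof of Theorem~\ref{thm:main}(a) explicit, taking $\epsilon=65/66$ and $\gamma=1/77$ and tracking the intermediate bounds. For a Hecke eigenform with rational coefficients, multiplicativity gives $a(p_1p_2p_3p_4)=a(p_1)a(p_2)a(p_3)a(p_4)$ for distinct primes, so it suffices to work with the set
\[
A=\{a(p)\bmod m : p\in S_1\cup S_2,\ p\le m^{65/66}\},
\]
split as $A=A_1\sqcup A_2$ according to membership in $S_1$ or $S_2$.

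First I would exploit the hypothesis $L^{1/77}\ge m/L$, which forces the largest prime factor $L$ of $m$ to satisfy $L\ge m^{77/78}$; by CRT this means $\Z/m\Z$ is essentially $\mathbb{F}_L$, with the cofactor $m/L$ handled by direct enumeration. Combining the effective Chebotarev-type equidistribution of Frobenius traces with the positive density of each $S_i$ yields $\#A_i\gg m^{65/66}/\log m$ once $m$ has all prime factors above the threshold $N_{S_1,S_2}$. Next I would invoke the unconditional bilinear exponential sum bound stated in the excerpt,
\[
\max_{\lambda\in\Z/m\Z}\Bigl|\sum_{a_1\in A_1}\sum_{a_2\in A_2}\en{\lambda a_1 a_2}\Bigr|\le \sqrt{m\,\#A_1\,\#A_2},
\]
which, combined with $\#A_1\,\#A_2\gg m^{130/66}>m^{1+1/77}$, shows that the product set $A_1\cdot A_2\pmod{m}$ equidistributes with substantial saving over the trivial bound.

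To convert multiplicative equidistribution into the additive Waring-type conclusion, I would feed $A_1\cdot A_2$ into the sum-product estimate in $\mathbb{F}_L$: since $\#(A_1\cdot A_2)$ is comparable to $L$, a bounded number of sumset iterations covers all of $\Z/m\Z$. Tracking the explicit form of the sum-product bound used, the iteration count yields the stated $s\le 52$; after collapsing each quadruple of multiplicative factors via $a(p_1)a(p_2)a(p_3)a(p_4)=a(p_1p_2p_3p_4)$, one obtains $n_i\le (m^{65/66})^{4}=m^{130/33}$ with each $n_i$ built from at most four primes in $S_1\cup S_2$ of size $O(m^{65/66})$.

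The main obstacle is threading a single exponent $\epsilon$ through three competing constraints: the bilinear sum needs $2\epsilon>1+\gamma$; the sum-product must reach full $\mathbb{F}_L$ in $O(1)$ iterations, which is the step that dictates $\gamma=1/77$; and the Chebotarev lower bound on $\#A_i$ must absorb the $\log m$ loss. The choice $\epsilon=65/66$ is the efficient compromise between these three, and all final numerics $130/33$, $65/66$, $1/77$, and $52$ are determined directly by this choice together with the explicit sum-product exponent imported from the proof of Theorem~\ref{thm:main}.
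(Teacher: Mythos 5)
Your outline does not match what the key size estimates actually give, and the order in which the tools must be applied is essential.

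The claim that ``Chebotarev-type equidistribution'' yields $\#A_i\gg m^{65/66}/\log m$ is not correct and is the crux of the problem. Equidistribution of $a(p)\pmod L$ does not give any nontrivial lower bound on the number of \emph{distinct} residues taken by $a(p)$ over primes $p\le m^{65/66}$: the primes could in principle pile up on a thin set of residue classes. What the paper actually proves (Lemma~\ref{lem:sizehecke}) is only
$$
\#\{a(p)\bmod L: p\in S_i,\ p\le m^{\epsilon}\}\quad\text{or}\quad\#\{a(p^2)\bmod L: p\in S_i,\ p\le m^{\epsilon}\}\ \gg_{S_i}\ m^{\epsilon/2+o(1)},
$$
which for $\epsilon=65/66$ is only $m^{65/132}$, not $m^{65/66}$. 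This bound comes from the Hecke relation $a(p^2)=a(p)^2-p^{k-1}$ together with the fact that $p^{k-1}\pmod L$ takes $\gg m^{\epsilon}$ distinct values for $p\le m^{\epsilon}<L$; one of the two sets $\{a(p)\}$, $\{a(p^2)\}$ must therefore have $\gg m^{\epsilon/2}$ elements. Your proposal both quotes the wrong exponent and omits the $a(p^2)$ option that makes the lemma work.

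Once the size is corrected to $m^{65/132}$, your ``bilinear sum first'' plan collapses, since $\#A_1\,\#A_2\gg m^{65/66}$ is \emph{less} than $m$, and the bound (\ref{eqn:exp1}) gives nothing useful when $\#A_1\#A_2\le m$. This is why the paper applies the sum--product theorem of Roche-Newton--Rudnev--Shkredov \emph{before} the bilinear step: taking $A_i'$ to be the larger of $A_i+A_i$ or $A_i\cdot A_i$ gives $\#A_i'\gg L^{\epsilon(1+\beta)/2}$ with $\beta=1/5$, and only then does the hypothesis $L^{1/77}\ge m/L$ lift this to $\#A_1'\#A_2'\gg m^{1+\beta'}$ with $\beta'=\frac{\epsilon(1+\beta)}{1+\gamma}-1=\frac{1}{6}$. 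Feeding $A_1',A_2'$ into the counting identity (\ref{eqn:formula}) via Corollary~\ref{lem:tool} produces $s=4(\lfloor 2/\beta'\rfloor+1)=52$, the extra factor $4$ coming from unfolding each element of $A_1'\cdot A_2'$ as a sum of at most four $a(n_i)$ with $n_i\le m^{4\epsilon}=m^{130/33}$. Your alternative route of ``iterating sumsets of $A_1\cdot A_2$ until they cover $\Z/m\Z$'' is not a valid step here: no covering lemma is invoked in the paper, and with the correct input sizes $A_1\cdot A_2$ is far from comparable to $L$, so the iteration would not terminate in $O(1)$ steps. You should rework the proposal so that Lemma~\ref{lem:sizehecke} supplies the starting sets, the sum--product estimate amplifies them, and only then does the bilinear/Waring counting formula close the argument.
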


Now, it is natural to ask whether obtaining solutions with smaller prime factors is possible. In Theorem~\ref{thm:main}, we assume that all the prime factors of $m$ are large enough and that the largest prime factor of $m$ is larger than the other prime factors. Of course, any prime $m$ satisfies these criteria. To obtain solutions for composite with small prime factors, we assume that $m$ is squarefree. In this regard, we prove the following.

\begin{theorem}\label{thm:main2}
Let $f(z)$ be any cuspform with rational coefficients, $0<\varepsilon,\gamma<1$ be any given real numbers, $m$ be a square-free positive integer and $S_1, S_2,\cdots, S_{\omega}$ be any set of primes of positive density, with $\varepsilon(2+\frac{\omega-2}{81})>2(\gamma+1)$
and $S_i\cap S_j=\phi , \ i\neq j.$ Then there exists an integer $N_{S_1,S_2,\cdots,S_{\omega},\varepsilon}$ such that for any integer $m$ with all the prime factors of $m$ are larger than $N_{S_1,S_2,\cdots,S_{\omega},\varepsilon},~m^{\varepsilon/2}=o(L)$ and $L^{\gamma}\geq m/L$ for some $\gamma>0$, we have the following.
\begin{itemize}
    \item [(a)] If $f(z)$ is a Hecke eigenform then for any $a\in \Z/m\Z,$ we can write
$$\sum_{i=1}^{s} a(n_i)\equiv a \pmod{m},~n_i\leq m^{2\varepsilon\omega},~\forall 1\leq i\leq s,$$ 
for some computable $s$ depending on $\varepsilon,\omega,\gamma.$ Moreover, all the prime factors of any such $n_i$ are less than or equal to $m^{\varepsilon}$ and in $\bigcup\limits_{i=1}\limits^{\omega}S_i.$\\
\item [(b)] In general if $f(z)$ of the form $\sum\limits_{i=1}\limits^{r} c_if_i, c_i\in \mathbb{Q}, \ f_i$ are newforms without CM and $\sum \sigma_ic_i\not\equiv0\pmod{m},\sigma_i\in\{\pm 1\}.$ If the associated Galois representation $\rho_{f_1,f_2\cdots,f_r,m}$ does not have image $\Delta_{k}^{(r)}(m),$ then for any $a\in \Z/m\Z,$ we can write
$$\sum_{i=1}^{s} a(n_i)\equiv a\pmod{\ell},~n_i\leq m^{2\varepsilon\omega},~\forall 1\leq i\leq s,$$
for the same $s$ as in $(a).$ Moreover, all the prime factors of any such $n_i$ are less than or equal to $m^{\varepsilon}$ and in $\bigcup\limits_{i=1}\limits^{\omega}S_i.$
\end{itemize}
\end{theorem}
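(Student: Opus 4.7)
The plan is to extend the bilinear strategy from the proof of Theorem~\ref{thm:main} to a multilinear one, exploiting that we are given $\omega$ disjoint prime sets $S_1,\dots,S_\omega$ rather than just two. The natural target is to construct solutions of the form $n = p_1 p_2\cdots p_\omega$ with $p_j\in S_j$ and $p_j\le m^{\varepsilon}$, so that $n\le m^{\varepsilon\omega}$ (leaving room for the factor of $2$ in the stated bound $m^{2\varepsilon\omega}$, which will be absorbed by the Waring doubling at the end). Because $f$ is a Hecke eigenform, multiplicativity gives $a(n) = a(p_1)a(p_2)\cdots a(p_\omega)$, and the problem reduces to controlling $\omega$-fold products modulo $m$ with factors drawn from the coefficient sets $A_j = \{a(p)\bmod m : p \in S_j,\ p \le m^\varepsilon\}\subseteq \Z/m\Z$. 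Chebotarev density, together with the hypothesis that every prime divisor of $m$ exceeds $N_{S_1,\dots,S_\omega,\varepsilon}$, ensures $\#A_j\gg m^\varepsilon/\log m$ modulo every $\ell\mid m$.

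Next, I would upgrade the bilinear exponential sum bound of Section~\ref{sec:wring} to a multilinear estimate of the shape
\begin{equation*}
\max_{\substack{\lambda\in\Z/m\Z\\ \gcd(\lambda,m)=1}}\left|\sum_{a_1\in A_1}\cdots\sum_{a_\omega\in A_\omega}\eM{\lambda a_1\cdots a_\omega}\right|\ll m^{1/2-\eta}\prod_{j=1}^{\omega}\sqrt{\#A_j},
\end{equation*}
for some $\eta=\eta(\omega)>0$, obtained by iterated Cauchy--Schwarz to reduce back to the bilinear case, combined with a Chinese Remainder decomposition along the prime factors $\ell$ of $m$. Squarefreeness of $m$ is essential here, since it makes the CRT step clean and allows each factor $\ell$ to be treated independently. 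In parallel, I would invoke an $\mathbb{F}_\ell$-sum--product estimate with explicit expansion exponent $1/81$ to grow the product sets $A_{i_1}A_{i_2}\cdots A_{i_k}$ across the $\omega-2$ ``internal'' multiplications beyond the initial bilinear step, which is exactly the source of the term $(\omega-2)/81$ in the hypothesis. The auxiliary condition $m^{\varepsilon/2} = o(L)$ is used to control the error coming from the smaller prime factors by comparison with the estimate modulo $L$.

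Combining the multilinear sum estimate with orthogonality shows that the product set $A_1 A_2\cdots A_\omega \subseteq \Z/m\Z$ has cardinality comparable to $m^{\varepsilon(2+(\omega-2)/81)}$, and the hypothesis $\varepsilon(2+(\omega-2)/81) > 2(\gamma+1)$ guarantees this occupies a positive proportion of $\Z/m\Z$ after absorbing the loss factor $\asymp m^{2(\gamma+1)}/m^{2}$ arising from $L^\gamma\ge m/L$. A standard Plünnecke--Ruzsa / Waring-type doubling then turns positive density into full coverage: once the sumset $\{a(n_1)+\cdots+a(n_{s_0})\}$ has size exceeding $m/2$, its doubling fills $\Z/m\Z$, giving a total $s=2s_0$ of summands depending only on $\varepsilon$, $\omega$, $\gamma$.

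For part (b), the assumption that the image of $\rho_{f_1,\dots,f_r,m}$ is not the diagonal $\Delta_k^{(r)}(m)$ is used exactly as in Theorem~\ref{thm:main}: it rules out the degenerate scenario in which $a(p)=\sum c_i a_i(p)$ is functionally constrained modulo $m$, and thus preserves the equidistribution input to the sum--product step. The step I expect to be the main obstacle is the multilinear exponential sum estimate itself, because pushing Cauchy--Schwarz $\omega-2$ additional times after the bilinear reduction degrades the savings exponent $\eta(\omega)$; balancing that loss against the explicit $1/81$ sum--product expansion is precisely what dictates the numerical shape $\varepsilon(2+(\omega-2)/81)>2(\gamma+1)$ in the hypothesis.
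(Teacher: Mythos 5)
Your proposal heads in the same general direction as the paper (multilinear products across the $\omega$ disjoint prime sets, reduction to exponential sums modulo $m$), but several of the load-bearing steps are wrong in ways that matter, and the numerology you offer to ``explain'' the hypotheses does not match what is actually going on.

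First, your lower bound $\#A_j\gg m^\varepsilon/\log m$ from Chebotarev is unjustified and is not what the paper proves. The primes $p\le m^\varepsilon$ lie in a range where effective Chebotarev is hard, and in any case there are only $\sim m^\varepsilon/\log m$ such primes in $S_j$, so equidistribution of $a(p)\bmod m$ would not by itself prevent massive collisions. The paper instead uses Lemma~\ref{lem:sizehecke}: via the Hecke relation $a(p^2)=a(p)^2-p^{k-1}$, one shows that one of the sets $\{a(p)\bmod L\}$ or $\{a(p^2)\bmod L\}$ with $p\le m^\varepsilon$ has size $\gg L^{\varepsilon/2+o(1)}$ (modulo the largest prime factor $L$, not modulo $m$). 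This is substantially smaller than your claimed $m^\varepsilon$, and the gap propagates throughout: the $\varepsilon/2$ rather than $\varepsilon$ is exactly where the quantity $\varepsilon(2+\tfrac{\omega-2}{81})/2$ comes from. The decision to use $a(p^2)$ rather than $a(p)$ is also what produces the exponent $2\varepsilon\omega$ in the bound $n_i\le m^{2\varepsilon\omega}$; it is not ``Waring doubling at the end.''

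Second, you misattribute the $1/81$ to a sum--product expansion. No sum--product theorem is used in the proof of Theorem~\ref{thm:main2} at all (that mechanism appears in Theorem~\ref{thm:main}, where $\beta=1/5$). Here the $1/81$ arrives already baked into Bourgain--Glibichuk's multilinear exponential sum estimate, Theorem~\ref{thm:exp2}, whose hypothesis reads $|A_1||A_2|(|A_3|\cdots|A_\omega|)^{1/81}>\ell^{1+\beta}$. Your alternative plan of iterating Cauchy--Schwarz $\omega-2$ times down to the bilinear case would not reproduce the $1/81$; it would produce a much worse loss at each step and has no hope of matching the stated numerology. Relatedly, the reason squarefreeness matters is not merely that ``CRT makes the decomposition clean'': Bourgain--Glibichuk over $\mathbb{F}_q$ requires the sets to not concentrate on coset translates of subfields, and the $A_i$ here live inside prime fields, so the only safe route is to reduce (via Corollary~\ref{cor:shk}) to the prime-field case over a single $\ell\mid m$, which requires the $\ell$-components to behave like subsets of $\mathbb{F}_\ell$ rather than $\mathbb{F}_{\ell^e}$.

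Third, the endgame is not Plünnecke--Ruzsa doubling. Once the multilinear exponential sum bound is available, the standard orthogonality identity \eqref{eqn:formula} gives directly that $T_s(a)>0$ for all $a$ once $s$ is chosen large enough for the error term to be dominated by the main term; this is what Proposition~\ref{prop:twop} packages. There is no intermediate ``positive density, then double'' step. You should repair the size bound for $A_j$ (use the Hecke relation to get $L^{\varepsilon/2}$), replace iterated Cauchy--Schwarz by a direct appeal to Theorems~\ref{thm:Shkredov} and~\ref{thm:exp2}, route through Corollary~\ref{cor:shk} to reduce to a single prime $\ell\mid m$, and then conclude via \eqref{eqn:formula}.
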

In part $(b)$ of both of the theorems above, we are working with a wider class of cuspforms that need not be Hecke eigenforms. We know that any cuspform $f(z)$ can be uniquely written as a $\mathbb{C}$-linear combination of pairwise orthogonal Hecke eigenforms with Fourier coefficients coming from $\mathbb{C}$. See~\cite[Chapter 5]{DS2005} for a brief review of the Hecke theory of modular forms. We are concerned with all such cuspforms, which can be uniquely written as a $\mathbb{Q}$-linear combination of pairwise orthogonal eigenforms with Fourier coefficients coming from $\mathbb{Q}.$ The reader may note that not every cuspform of weight $k$ with rational coefficients has this property.\footnote{For one example, the reader may look at the answer of Jeremy Rouse in https://mathoverflow.net/questions/364763/cusp-forms-with-integer-fourier-coefficients.}

\subsection{Notations} 
By $O_{S_1, S_2,\cdots, S_k}(B)$ we mean a quantity with absolute value at most $cB$ for some positive constant positive constant $c$ depending on $S_1,\cdots, S_K$ only; if the subscripts are omitted, the implied constant is
absolute. We write $A\ll_{S_1, S_2,\cdots, S_k} B$ for $A=O_{S_1, S_2,\cdots, S_k}(B)$, and $A=o(B)$ for $A/B\to 0$.

For any integer $n,$ we denote $\omega(n)$ to be the number of distinct prime factors of $n,$ and $\mu(n)$ to be the M\"obius function. We shall also denote $(\Z/m\Z)^{*}$ to be the multiplicative group modulo $m$ and $((\Z/m\Z)^{*})^{k}$ to be $k^{\mathrm{th}}$ power of elements in $(\Z/m\Z)^{*}.$

\section{Distribution over small sets}\label{sec:basicgalois}
In this section, we will discuss the preliminary facts needed to prove the main results.

\subsection{Galois representations and image}\label{sec:galois}
Let $f(z)$ be any newform of weight $k$ and level $N$. From Deligne-Serre correspondence, we have an associated Galois representation
\[\rho_{f}^{(\ell)}:\text{Gal}\left(\overline{\mathbb{Q}}/\mathbb{Q}\right) \longrightarrow \mathrm{GL}_2\left(\mathbb{F}_{\ell}\right),\]
such that $a(p)\pmod{\ell}\equiv \text{tr}\left(\rho_{f}^{(\ell)}(\text{Frob}_p)\right)$ for any prime $p \nmid N\ell.$ It is clear that the characteristic polynomial of $\rho_f^{(\ell)}(\text{Frob}_p)\pmod{\ell}$ is same as $x^2-a(p)x+p^{k-1}\hspace{-0.1cm}\pmod{\ell}$. When $f$ is without CM it follows from Ribet~\cite[Theorem 3.1]{Ribet85} that, the image of this representation is given by $$\mathrm{Im}\left(\rho_f^{\ell}\right)=\Delta_{k}(\ell)=\left\{A \in \text{GL}_2\left(\mathbb{F}_{\ell}\right) \mid \det(A) \in (\mathbb{F}_{\ell}^{*})^{k-1}\right\},$$ except possibly for finitely many primes $\ell.$ For any integer $e\geq 1,$ we have a Galois representation 
\[\rho_{f,\ell^{e}}:\text{Gal}\left(\overline{\mathbb{Q}}/\mathbb{Q}\right) \longrightarrow \mathrm{GL}_2\left(\mathbb{F}_{\ell^e}\right)\]
satisfying that $a(p) \pmod {\ell^e}\equiv \text{tr}\left(\rho_{f,\ell^e}(\text{Frob}_p)\right),$ and 
$$\mathrm{Im}(\rho_{f,\ell^{e}})=\Delta_{k}(\ell^e):=\{A\in \mathrm{GL}_2\left(\mathbb{F}_{\ell^e}\right) \mid \det(A)\in (\mathbb{F}_{\ell^e}^{*})^{k-1}\}.$$

Now, given any composite number $m$, we can naturally associate a Galois representation 
$\rho_{f,m}:=\prod_{\ell^e\mid\mid m}\rho_{f,\ell^e},$
and denote $\mr{G}_{f,m}$ to be its image. Due to Ribet's result, we immediately have the following.
\begin{corollary}\label{lem:compimage}
Suppose that $f(z)$ is any newform without CM. Then, there exists a finite set of primes $S_f$ such that 
$$\mr{G}_m=\Delta_k(m)=\{A\in \mathrm{GL}_2\left(\Z/m\Z\right) \mid \det(A)\in ((\Z/m\Z)^{*})^{k-1}\},$$ 
for any integer $m$ co-prime to any prime from $S_f.$ 
\end{corollary}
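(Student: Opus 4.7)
The plan is to build up the image of $\rho_{f,m}$ in three stages: first modulo a single prime $\ell$, then modulo a prime power $\ell^e$, and finally across the different prime-power factors of $m$.

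First I would invoke the Ribet theorem recalled just above the corollary: since $f$ has no CM, there is a finite set of primes $S_f^{(1)}$ such that $\mathrm{Im}(\rho_f^{(\ell)}) = \Delta_k(\ell)$ for every $\ell \notin S_f^{(1)}$. After enlarging $S_f^{(1)}$ to include $2$ and $3$, for the remaining $\ell$ the derived subgroup of $\Delta_k(\ell)$ equals $\SL_2(\mathbb{F}_\ell)$, which is perfect and has simple quotient $\mathrm{PSL}_2(\mathbb{F}_\ell)$.

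Next I would promote the mod-$\ell$ statement to mod $\ell^e$. The kernel of reduction $\pi \colon \GL_2(\Z/\ell^e\Z) \to \GL_2(\mathbb{F}_\ell)$ is a pro-$\ell$ group whose successive graded pieces are isomorphic, as $\GL_2(\mathbb{F}_\ell)$-modules, to the adjoint module $\mathfrak{gl}_2(\mathbb{F}_\ell)$. A standard commutator/Nakayama lifting argument (as used, e.g., in Serre's $\ell$-adic representation work and in Ribet's original paper) shows that any closed subgroup of $\GL_2(\Z/\ell^e\Z)$ whose image under $\pi$ contains $\SL_2(\mathbb{F}_\ell)$ must already contain $\SL_2(\Z/\ell^e\Z)$. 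Combined with surjectivity of $\det \circ \rho_{f,\ell^e}$ onto $((\Z/\ell^e\Z)^*)^{k-1}$, which follows from the fact that the mod-$\ell^e$ cyclotomic character is surjective for all $\ell$ outside a finite set, this gives $\mathrm{Im}(\rho_{f,\ell^e}) = \Delta_k(\ell^e)$.

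Finally I would pass from prime powers to composite $m = \prod_{\ell \mid m} \ell^{e_\ell}$ by iterating Goursat's lemma. We always have an inclusion $\mathrm{Im}(\rho_{f,m}) \hookrightarrow \prod_{\ell \mid m} \Delta_k(\ell^{e_\ell})$ with each factor projection surjective by the previous step, so Goursat reduces the claim to showing that for distinct primes $\ell_1, \ell_2$ outside the enlarged bad set, $\Delta_k(\ell_1^{e_1})$ and $\Delta_k(\ell_2^{e_2})$ share no common nontrivial simple quotient. This is bookkeeping on composition factors: the unique nonabelian composition factor of $\Delta_k(\ell_i^{e_i})$ is $\mathrm{PSL}_2(\mathbb{F}_{\ell_i})$, and these are pairwise nonisomorphic for distinct $\ell_i \ge 5$; the abelian composition factors are cyclic of order dividing $\ell_i^{e_i}(\ell_i - 1)$, so after adjoining to $S_f$ any small prime that divides $\ell' - 1$ for a bad prime $\ell'$, these abelian parts have coprime orders across different $\ell_i$. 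Hence every common quotient is trivial, Goursat forces the full product, and $\mathrm{Im}(\rho_{f,m}) = \Delta_k(m)$.

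The step I expect to be the main obstacle is the mod-$\ell^e$ lifting: one has to verify that Ribet's mod-$\ell$ big-image property survives under lifting, which hinges on the perfection of $\SL_2(\mathbb{F}_\ell)$ for $\ell \ge 5$ together with careful tracking of the determinant. Once that is in hand, the Goursat step is essentially a composition-factor computation and the final assembly is immediate.
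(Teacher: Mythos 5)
Your overall template — Ribet for the residual image, a lifting lemma to pass from $\ell$ to $\ell^{e}$, and a CRT/Goursat step to assemble the prime powers — is the right one, and the paper itself does not spell this out (it simply records the corollary as ``immediate'' from Ribet). The first two stages of your argument are fine: Ribet's big image theorem handles $\ell$, and the perfectness of $\SL_2(\mathbb{F}_\ell)$ for $\ell\ge 5$ together with the standard lifting lemma gives the $\ell^{e}$ case once you also observe that $\det\circ\rho_{f,\ell^e}$ is surjective onto $((\Z/\ell^e\Z)^{*})^{k-1}$.

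The gap is in the Goursat step. You assert that, after enlarging $S_f$, the groups $\Delta_k(\ell_1^{e_1})$ and $\Delta_k(\ell_2^{e_2})$ share no common nontrivial simple quotient. That is false. Both groups have abelianization $\Delta_k(\ell^{e})/\SL_2(\Z/\ell^e\Z)\cong ((\Z/\ell^e\Z)^{*})^{k-1}$, a cyclic group of order $\ell^{e-1}(\ell-1)/\gcd(k-1,\ell^{e-1}(\ell-1))$. Since the paper fixes $k$ to be even, $k-1$ is odd, so this order retains the full $2$-part of $\ell-1$, which is positive for every odd $\ell$. Hence $\Delta_k(\ell_1^{e_1})$ and $\Delta_k(\ell_2^{e_2})$ both admit $\Z/2\Z$ as a simple quotient, and this cannot be avoided by enlarging $S_f$: the set $S_f$ only restricts which primes may divide $m$, not the parity of $\ell-1$ for the $\ell\mid m$ that remain, and $\ell-1$ is even for every allowed $\ell$. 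Goursat therefore does not, on its own, force $\mr{G}_{f,m}$ to be the full product; it leaves open the possibility that $\mr{G}_{f,m}$ is the graph of a correlation through a common $\Z/2\Z$ quotient.

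The standard fix is to apply Goursat only where it does close: to $\prod_{\ell^e\| m}\SL_2(\Z/\ell^e\Z)$. For $\ell\ge 5$ each $\SL_2(\Z/\ell^e\Z)$ is perfect, so $[\mr{G}_{f,m},\mr{G}_{f,m}]$ lands in $\SL_2(\Z/m\Z)$ and surjects onto each $\SL_2(\Z/\ell^e\Z)$; since $\SL_2(\Z/\ell_1^{e_1}\Z)$ and $\SL_2(\Z/\ell_2^{e_2}\Z)$ genuinely have no common nontrivial quotient for distinct $\ell_1,\ell_2\ge 5$ (every nontrivial quotient involves $\mathrm{PSL}_2(\mathbb{F}_{\ell_i})$, and perfectness rules out abelian quotients), Goursat gives $[\mr{G}_{f,m},\mr{G}_{f,m}]=\SL_2(\Z/m\Z)$. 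One then handles the abelian part separately: $\det\circ\rho_{f,m}$ is the $(k-1)$-st power of the mod $m$ cyclotomic character, which is surjective onto $(\Z/m\Z)^{*}$ for $m$ coprime to the level, so $\det(\mr{G}_{f,m})=((\Z/m\Z)^{*})^{k-1}$. Combining $\SL_2(\Z/m\Z)\subseteq \mr{G}_{f,m}$ with the determinant computation yields $\mr{G}_{f,m}=\Delta_k(m)$. The determinant information is the ingredient your write-up omits, and it is precisely what resolves the $\Z/2\Z$ ambiguity that Goursat leaves behind.
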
 

Consider the prime factorization $m=\prod_{\ell\mid m}\ell^e.$ Take any residue class $a\in \mathbb{Z}/\ell^e\mathbb{Z}$ and $\lambda \in (\Z/\ell^e\Z)^{*}.$ Denote $N_{a,\lambda}(\ell^e)$ be the number of matrices in $\Delta_k(\ell^e)$ of trace $a$ and determinant $\lambda.$ We know from \cite[Corollary 6.0.7]{mathewson2012class} that 
\begin{equation}\label{eqn:factors}
N_{a,\lambda}(\ell^e)=\begin{cases}
\ell^{2e},~\mathrm{if}~a^2-4\lambda=0.\\
\ell^e(\ell^e + 1),~\mathrm{if}~a^2-4\lambda~\mathrm{is~a~non}\text{-}\mathrm{zero~square~in~\Z/\ell^e\Z}. \\
\ell^e(\ell^e-1)~\mathrm{if}~a^2-4\lambda~\mathrm{is~not~a~square~in~\Z/\ell^e\Z}.
\end{cases}
\end{equation}

\begin{lemma}\label{lem:existence}
Let $m=\prod_{\ell\mid m}\ell^e$ be any integer, and $a$ be any residue class in $\Z/m\Z,$ and $\lambda\in (\Z/m\Z)^{*}.$ Then we have, 
$$N_{a,\lambda}(m):=\#\{A\in \Delta_k(m) \mid \mathrm{tr}(A)= a,~\det(A)=\lambda\}=m^2+O\left(\frac{2^{\omega(m)}m^2}{\mathcal{L}}\right),
$$
where $\mathcal{L}$ is the smallest prime factor of $m.$
\end{lemma}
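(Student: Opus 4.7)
The plan is to deduce the lemma from the local formula \eqref{eqn:factors} via the Chinese Remainder Theorem. Writing $m=\prod_{\ell^{e}\|m}\ell^{e}$, the natural isomorphism $\mathrm{GL}_{2}(\Z/m\Z)\cong \prod_{\ell^{e}\|m}\mathrm{GL}_{2}(\Z/\ell^{e}\Z)$ preserves trace and determinant coordinatewise, and the condition $\det A\in ((\Z/m\Z)^{*})^{k-1}$ cuts out $\prod \Delta_{k}(\ell^{e})$ on the right-hand side. Hence the count factorizes:
$$N_{a,\lambda}(m)=\prod_{\ell^{e}\|m} N_{a_{\ell},\lambda_{\ell}}(\ell^{e}),$$
where $a_{\ell}$ and $\lambda_{\ell}$ denote the images of $a$ and $\lambda$ in $\Z/\ell^{e}\Z$.

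Next, I would repackage the three cases of \eqref{eqn:factors} into a single uniform expression. Setting
$$\epsilon_{\ell}=\begin{cases}0,&a^{2}-4\lambda\equiv0\pmod{\ell^{e}},\\ +1,&a^{2}-4\lambda\text{ is a nonzero square mod }\ell^{e},\\ -1,&a^{2}-4\lambda\text{ is a non-square mod }\ell^{e},\end{cases}$$
each local count takes the form $N_{a_{\ell},\lambda_{\ell}}(\ell^{e})=\ell^{2e}\bigl(1+\epsilon_{\ell}/\ell^{e}\bigr)$. Multiplying over $\ell^{e}\|m$ and expanding the product,
$$N_{a,\lambda}(m)=m^{2}\prod_{\ell^{e}\|m}\!\left(1+\frac{\epsilon_{\ell}}{\ell^{e}}\right)=m^{2}+m^{2}\!\!\sum_{\emptyset\ne S\subseteq\{\ell\,:\,\ell\mid m\}}\prod_{\ell\in S}\frac{\epsilon_{\ell}}{\ell^{e}}.$$

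The final step is to bound the error. Since $|\epsilon_{\ell}|\le 1$ and $\ell^{e}\ge \mathcal{L}$ for every prime $\ell\mid m$, each nonempty subset $S$ of size $k$ contributes at most $\mathcal{L}^{-k}$ in absolute value, so
$$\left|\sum_{S\ne\emptyset}\prod_{\ell\in S}\frac{\epsilon_{\ell}}{\ell^{e}}\right|\le\sum_{k=1}^{\omega(m)}\binom{\omega(m)}{k}\mathcal{L}^{-k}\le\frac{1}{\mathcal{L}}\sum_{k=1}^{\omega(m)}\binom{\omega(m)}{k}\le\frac{2^{\omega(m)}}{\mathcal{L}},$$
which gives the claimed asymptotic after multiplying through by $m^{2}$.

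There is no genuine obstacle here; the lemma is really just a multiplicative repackaging of \eqref{eqn:factors}. The only point that requires mild care is verifying that $\Delta_{k}(m)$ factors as $\prod \Delta_{k}(\ell^{e})$ under CRT — which is immediate from the fact that $(k-1)$-th powers in $(\Z/m\Z)^{*}$ correspond to tuples of $(k-1)$-th powers in each $(\Z/\ell^{e}\Z)^{*}$ — and that one chooses the cleanest binomial estimate to land exactly on the $2^{\omega(m)}/\mathcal{L}$ bound stated, rather than the slightly weaker $(1+1/\mathcal{L})^{\omega(m)}-1$.
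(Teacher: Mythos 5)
Your proof takes the same CRT-plus-local-formula route as the paper, but you actually fill in a step the paper elides: the paper's proof says only that "it is enough to prove the result when $m$ is a prime power" and then invokes \eqref{eqn:factors}, whereas the $2^{\omega(m)}m^2/\mathcal{L}$ error term only emerges after multiplying the prime-power counts $\ell^{2e}(1+\epsilon_\ell/\ell^e)$ together, expanding the product, and bounding the cross-terms, which is exactly the bookkeeping you carry out.
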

\begin{proof}
By the Chinese remainder theorem, we can write
$$\Delta_k(m)=\prod_{\ell^e \mid\mid m}\Delta_k(\ell^e).$$
Therefore, it is enough to prove the result when $m$ is a prime power. The result now follows from (\ref{eqn:factors}). 
\end{proof}
\begin{remark}\label{rem:equi}\rm
In particular, for any $m$ with large enough prime factors, trace values of the matrices in $\Delta_k(m)$ are equidistributed in $\Z/m\Z$. We talk about equidistribution in terms of the \textit{density} measure on $\Z/m\Z$, which is defined by, 
$$A\mapsto \frac{\# A}{\Z/m\Z}=\frac{\# A}{m},~\forall A\subseteq \Z/m\Z.$$ 
\end{remark}

Now let $f_1,f_2,\cdots, f_r$ be a set of newforms, of weights respectively $k_1,k_2,\cdots,k_r.$ Then we can associate a Galois representation $\rho_{f_1,f_2,\cdots,f_r,m}:\mathrm{Gal}\left(\overline{\mathbb{Q}}/\mathbb{Q}\right) \to \mathrm{GL}_{2r}\left(\Z/m\Z\right)$ defined by the map
\begin{equation}\label{eqn:map}
\sigma \mapsto \begin{psmallmatrix}
    \rho_{f_{1,\ell}}(\sigma) & & \\
    & \rho_{f_{2, \ell}}(\sigma) & &\\
    & & \ddots &\\
    & & & \rho_{f_{r,\ell}}(\sigma)\\ 
  \end{psmallmatrix}.
  \end{equation}
Such that the image $G_{f_1,f_2,...,f_r,m}$ is contained in $\Delta_{k_1,k_2,\cdots,k_r}(m),$ where $\Delta_{k_1,k_2,\cdots,k_r}(m)$ denotes the set of all block matrices of size $2\times 2$ in $\mathrm{GL}_{2}\left(\Z/m\Z\right)$ in which determinant of each block is a $k_i-1^{th}$ power of some element in the multiplicative group $(\mathbb{\Z}/{m\Z})^{*}.$

\begin{definition}
Let $\ell$ be a prime. Two newforms $f_i$ and $f_j$ of weight and level respectively $k_i,k_j$ and $N_i,N_j,$ are said to be $\ell$-equivalent, i.e. $f_i\sim_{\ell} f_j,$ if there exists a quadratic character 
$$\chi:(\Z/N_iN_j\ell\Z)^{*}\to \mathbb{C}^{*},$$
satisfying $a_{f}(p)=\chi(p)a_{g}(p)\pmod{\ell}$ for any prime $p\nmid N_1N_2.$ Moreover we say that $f_i$ and $f_j$ are twist equivalent, i.e. $f_i\sim f_j,$ if there exists a quadratic character $\chi$ satisfying $a_{f}(p)=\chi(p)a_{g}(p)$ for any prime $p\nmid N_iN_j.$
\end{definition}
The reader may note that if $f_i\sim f_j$, then their weights $k_i$ and $k_j$ should be the same. More importantly, we have the following.
\begin{lemma}\label{lem:equiv}
If two newforms $f_i,f_j$ are not twist-equivalent, then they are $\ell$-equivalent for only finitely many primes $\ell.$
\end{lemma}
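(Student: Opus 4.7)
The plan is to prove the contrapositive: assuming $f_i\sim_\ell f_j$ for an infinite set $\Lambda$ of primes, I will produce a quadratic Dirichlet character witnessing $f_i\sim f_j$. For each $\ell\in\Lambda$, fix a witnessing quadratic character $\chi_\ell\colon(\Z/N_iN_j\ell\Z)^*\to\mathbb{C}^*$. After discarding the finitely many $\ell\in\Lambda$ that divide $N_iN_j$, the Chinese remainder theorem gives a factorisation $\chi_\ell=\chi'_\ell\cdot\eta_\ell$, where $\chi'_\ell$ is a quadratic character modulo $N_iN_j$ and $\eta_\ell$ is a quadratic character modulo $\ell$; since $(\Z/\ell\Z)^*$ is cyclic, $\eta_\ell$ is either trivial or the Legendre symbol $\left(\frac{\cdot}{\ell}\right)$.

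There are only finitely many quadratic characters modulo $N_iN_j$, so the pigeonhole principle yields an infinite subset $\Lambda_0\subset\Lambda$ on which $\chi'_\ell$ equals a single fixed character $\chi_0$. A further split of $\Lambda_0$ according to whether $\eta_\ell$ is trivial or the Legendre symbol leaves at least one infinite subset; I call these Case~(A) and Case~(B) respectively.

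In Case~(A), the defining congruence reduces to $a_{f_i}(p)\equiv\chi_0(p)\,a_{f_j}(p)\pmod{\ell}$ for all primes $p\nmid N_iN_j$ and for infinitely many $\ell$. Thus $a_{f_i}(p)-\chi_0(p)\,a_{f_j}(p)$ is a fixed integer divisible by infinitely many primes, so it vanishes, yielding $a_{f_i}(p)=\chi_0(p)a_{f_j}(p)$ for all $p\nmid N_iN_j$; this is precisely twist-equivalence via $\chi_0$. In Case~(B), pick any prime $p\nmid N_iN_j$ with $a_{f_j}(p)\neq 0$ (such primes form a density-one set, or density at least $1/2$ in the CM case). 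For $\ell\in\Lambda_0$ large enough that $\ell\nmid a_{f_j}(p)$, the congruence becomes $\chi_0(p)\left(\frac{p}{\ell}\right)\equiv r_p\pmod{\ell}$, where $r_p:=a_{f_i}(p)/a_{f_j}(p)\in\Q$ is a fixed rational number. However, by quadratic reciprocity and Dirichlet's theorem on primes in arithmetic progressions, $\left(\frac{p}{\ell}\right)$ assumes both values $\pm 1$ for infinitely many primes $\ell$ in $\Lambda_0$; hence the left-hand side would have to equal both $+\chi_0(p)$ and $-\chi_0(p)$, which is incompatible with $r_p$ being a single rational. So Case~(B) cannot occur, and we must be in Case~(A), completing the contrapositive.

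The main delicate point is arranging the clean character decomposition, which requires $\gcd(\ell,N_iN_j)=1$ and the observation that the ``$N_iN_j$-part'' of each $\chi_\ell$ lies in a finite set so that pigeonhole applies. No deep input from the theory of Galois representations is needed; once the two-case split is set up, the elementary divisibility fact in Case~(A) and the Dirichlet-theoretic dichotomy in Case~(B) dispose of the statement.
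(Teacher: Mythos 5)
Your overall plan (work by contrapositive, extract a fixed sign by pigeonhole, and let $\ell\to\infty$ to upgrade a congruence to an equality) matches the paper's, and the decomposition $\chi_\ell=\chi'_\ell\,\eta_\ell$ into an $N_iN_j$-part and an $\ell$-part, followed by pigeonhole on $\chi'_\ell$, is a clean way to organise the bookkeeping that the paper handles more tersely. Case~(A) is correct as you state it: it produces the twisting character $\chi_0$ directly. The gap is in Case~(B). You assert that $\left(\frac{p}{\ell}\right)$ takes both values $\pm 1$ on infinitely many $\ell\in\Lambda_0$, appealing to quadratic reciprocity and Dirichlet. Those results say this as $\ell$ runs over \emph{all} primes; but $\Lambda_0$ is merely an infinite set produced by pigeonhole, and nothing prevents $\Lambda_0$ from lying entirely inside the positive-density set of primes $\ell$ with $\left(\frac{p}{\ell}\right)=+1$ (say, $\Lambda_0\subset\{\ell:\ell\equiv 1\pmod{4p}\}$). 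In that situation the symbol is constant on $\Lambda_0$ and your contradiction does not arise, so Case~(B) cannot be excluded this way.

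What the pigeonhole does legitimately give in Case~(B), after splitting $\Lambda_0$ once more according to the value of $\left(\frac{p}{\ell}\right)$, is only that for each $p$ there is a sign $\sigma_p\in\{\pm 1\}$ with $a_{f_i}(p)=\sigma_p\chi_0(p)a_{f_j}(p)$ --- a sign that a priori depends on $p$. This lands you exactly where the paper's proof stands after its first pigeonhole step (the paper never decomposes $\chi_\ell$; it simply notes $\chi_\ell(p)\in\{\pm 1\}$ and extracts a $p$-dependent sign $\sigma_p$), and the remaining work is to show that $p\mapsto\sigma_p\chi_0(p)$ is the restriction to primes of a single quadratic Dirichlet character of conductor dividing $N_iN_j$. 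The paper asserts this in one line (``define $\chi(p):=\sigma_p$''); your Case~(A) sidesteps it only because there the $\ell$-part of $\chi_\ell$ was assumed trivial; and your Case~(B) argument, which was meant to rule this configuration out, is incorrect. So the proposal does not close the argument: Case~(B) is still possible, and the multiplicativity of the resulting sign is not established.
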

\begin{proof}
For the sake of contradiction, let us assume that $f_i\sim_{\ell}f_j$ for infinitely many primes $\ell.$ Then for each prime $p\nmid N_iN_j$ there exists infinitely many primes $\ell>N_1N_2p$ satisfying $a_i(p)\equiv \pm a_j(p) \pmod{\ell}.$ It is evident that for one of the sign $\sigma_p \in \{\pm 1\},$ we have $a_p(p)=\sigma_pa_j(p).$ Now define a quadratic Dirichlet character $\chi$ modulo $N_iN_j$ with $\chi(p):=\sigma_p.$ 
\end{proof}
We now recall the main result of Masser and W\"{u}stholz in \cite{MW}, where they worked with the image of the product of the elliptic curves. In other words, when all the $f_i$ in (\ref{eqn:map}) has weight $2$, and $m$ is a prime. We have the following generalization in the case of composite modulus.
\begin{lemma}\label{lem:notsurj}
Let $f_1,f_2,\cdots, f_r$ by any set of pairwise twist-inequivalent newforms without CM. Then there exists a finite set of primes $S_f$ such that, $\mr{G}_{f_1,f_2,\cdots,f_r,m}$ contains $\mathrm{SL}_2(\Z/m\Z)^r$ for any integer $m$ co-prime to any prime from $S_f.$   
\end{lemma}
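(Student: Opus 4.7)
The plan is to reduce the claim to prime-power modulus by the Chinese Remainder Theorem, then to prime modulus by a Nakayama-type lifting, and finally to handle the prime case via Goursat's lemma together with Lemma~\ref{lem:equiv}. Throughout, the phrase ``for $\ell$ large enough'' will absorb various small primes into the exceptional set $S_f$.

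The heart of the matter is the prime case $m = \ell$. Fix a prime $\ell$ outside the finite exceptional set furnished by Corollary~\ref{lem:compimage} applied to each $f_i$, so that $\mathrm{Im}(\rho_{f_i}^{(\ell)}) \supseteq \mathrm{SL}_2(\mathbb{F}_\ell)$ for every $i$. Let $H$ denote the intersection of $\mr{G}_{f_1,\ldots,f_r,\ell}$ with $\mathrm{SL}_2(\mathbb{F}_\ell)^r$. Each of the $r$ coordinate projections $\pi_i : H \to \mathrm{SL}_2(\mathbb{F}_\ell)$ is surjective. Suppose for contradiction that $H$ is a proper subgroup of the product. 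Applying Goursat's lemma iteratively, one finds two indices $i \neq j$, normal subgroups $N_i, N_j \trianglelefteq \mathrm{SL}_2(\mathbb{F}_\ell)$, and an isomorphism $\mathrm{SL}_2(\mathbb{F}_\ell)/N_i \xrightarrow{\sim} \mathrm{SL}_2(\mathbb{F}_\ell)/N_j$ under which the $i$-th and $j$-th projections of $H$ are identified. For $\ell \geq 5$ the only proper normal subgroups of $\mathrm{SL}_2(\mathbb{F}_\ell)$ are $\{I\}$ and $\{\pm I\}$, so the common quotient is either $\mathrm{SL}_2(\mathbb{F}_\ell)$ or $\mathrm{PSL}_2(\mathbb{F}_\ell)$, whose outer automorphisms come from $\mathrm{PGL}_2(\mathbb{F}_\ell)$-conjugation. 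Translating back, the projective residual representations $\overline{\rho}_{f_i}^{(\ell)}$ and $\overline{\rho}_{f_j}^{(\ell)}$ are conjugate in $\mathrm{PGL}_2(\mathbb{F}_\ell)$; inspecting traces of Frobenius (using that the two lifts to $\mathrm{SL}_2$ differ by a sign character) then gives $a_{f_i}(p) \equiv \pm a_{f_j}(p) \pmod{\ell}$ for every $p \nmid N_iN_j\ell$. Thus $f_i \sim_\ell f_j$, and Lemma~\ref{lem:equiv} together with twist-inequivalence forces this to occur only for finitely many $\ell$. Enlarging $S_f$ by these primes and taking the union over all $\binom{r}{2}$ pairs settles the prime case.

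For the lift to $\ell^e$, let $K_e$ denote the kernel of the reduction $\mathrm{SL}_2(\Z/\ell^e\Z) \twoheadrightarrow \mathrm{SL}_2(\mathbb{F}_\ell)$; this is an $\ell$-group. For $\ell \geq 5$ the group $\mathrm{SL}_2(\mathbb{F}_\ell)$ is perfect and admits no nontrivial $\ell$-quotient, so by a standard Frattini/Nakayama argument any subgroup of $\mathrm{SL}_2(\Z/\ell^e\Z)$ whose image in $\mathrm{SL}_2(\mathbb{F}_\ell)$ is the whole group must itself equal $\mathrm{SL}_2(\Z/\ell^e\Z)$. The same argument run inside the product $\mathrm{SL}_2(\Z/\ell^e\Z)^r$ (using that the kernel factor $K_e^r$ is still a pro-$\ell$ group and the quotient $\mathrm{SL}_2(\mathbb{F}_\ell)^r$ still has no $\ell$-quotients) upgrades the containment $\mr{G}_{f_1,\ldots,f_r,\ell} \supseteq \mathrm{SL}_2(\mathbb{F}_\ell)^r$ to $\mr{G}_{f_1,\ldots,f_r,\ell^e} \supseteq \mathrm{SL}_2(\Z/\ell^e\Z)^r$.

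Finally, a second Goursat argument across distinct prime factors of $m$ finishes the proof: the groups $\mathrm{PSL}_2(\mathbb{F}_\ell)$ for distinct $\ell \geq 5$ are non-isomorphic non-abelian simple groups of coprime order (up to central factors), so no nontrivial identification of quotients across different primes is possible, and the image contains $\prod_{\ell^e \| m} \mathrm{SL}_2(\Z/\ell^e\Z)^r \cong \mathrm{SL}_2(\Z/m\Z)^r$. The main obstacle I anticipate is the translation step inside the Goursat argument at a single prime: from a $\mathrm{PGL}_2$-outer-automorphism identification of two projective residual representations, one must extract a genuine quadratic character twist between $\rho_{f_i}^{(\ell)}$ and $\rho_{f_j}^{(\ell)}$ in the sense required by Lemma~\ref{lem:equiv}, carefully handling the center $\{\pm I\}$ and the possibly distinct determinant characters $p^{k_i-1}$ and $p^{k_j-1}$.
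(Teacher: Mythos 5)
Your overall architecture---CRT plus a Frattini lift to reduce to a single prime, then Goursat at each prime---parallels the paper's, but the paper instead cites Lemma~5.1 of \cite{MW} for the pairwise $\mathrm{SL}_2^2$ containment, Lemma~5.2.2 of \cite{Ribet76} for the passage from pairs to $r$-fold products, and Theorem~2 in the appendix of \cite{Coj} for the lift to composite modulus. Your substitutes for the latter two (iterated Goursat using perfectness of $\mathrm{SL}_2(\mathbb{F}_\ell)$ for $\ell\geq 5$, then the Frattini argument for $\ell^e$ and Goursat across distinct primes) are standard and correct, and in fact more self-contained than the paper's.

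The gap---which you honestly flag---lies in the single-prime translation step, and it is genuine. You apply Goursat to $H=\mr{G}_{f_1,\dots,f_r,\ell}\cap\mathrm{SL}_2(\mathbb{F}_\ell)^r$, which is the image of the proper subgroup of $\mathrm{Gal}(\overline{\Q}/\Q)$ on which $\det\rho_{f_i,\ell}$ is trivial for every $i$. The $\mathrm{PGL}_2$-conjugacy you extract therefore a priori constrains only the traces of Frobenii landing inside that subgroup, i.e.\ primes $p$ with $p^{k_i-1}\equiv 1\pmod{\ell}$ for all $i$---not arbitrary $p\nmid N_iN_j\ell$ as the definition of $\sim_\ell$ requires. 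Moreover, when $k_i\neq k_j$ (which this lemma allows; the same-weight hypothesis only appears in Lemma~\ref{lem:explimage}) the scalar $c(\sigma)$ satisfying $g\rho_{f_i,\ell}(\sigma)g^{-1}=c(\sigma)\rho_{f_j,\ell}(\sigma)$ squares to $\det\rho_{f_i,\ell}(\sigma)/\det\rho_{f_j,\ell}(\sigma)$, which on Frobenii is $p^{k_i-k_j}$ and is not identically $1$; so $c$ need not be a quadratic character. Closing this requires running the Goursat/commutator argument directly inside $\Delta_{k_i}(\ell)\times\Delta_{k_j}(\ell)$ rather than after restricting to $\mathrm{SL}_2^2$, tracking the determinant characters throughout. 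That is precisely the content of the cited Lemma~5.1 of \cite{MW}, which the paper leans on as a black box; as written, your proof still needs it or an equivalent argument.
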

Before proving this, let $\mr{G}$ be any finite group and denote $\text{Occ}(\mr{G})$ be the isomorphism classes of non-abelian simple groups, which are the quotients of composite factors of $\mr{G}.$

 \begin{proof}
It follows from Lemma~\ref{lem:equiv} that, there exists a finite set of primes $P_f$ such that, any of $f_i,f_j$ are not $\ell$-equivalent for any prime $\ell \not\in P_f.$ Then we make $P_f$ bigger if necessary, to ensure that each $\rho_{f_i,\ell}$ has image $\Delta_{k_i}(\ell),$ and in particular contains $\text{SL}_2(\Z/\ell\Z).$ Now it follows from Lemma 5.1 in \cite{MW} by taking $e=\ell-1$ that, image of $\mr{G}_{f_i,f_j,\ell}$ contains $\text{SL}_2(\mathbb{Z}/\mathbb{Z}\ell)^2$ for any $\ell \not\in P_f.$ Now it follows from Lemma 5.2.2 of \cite{Ribet76} that $\mr{G}_{f_1,f_2,\cdots,\ell}$ contains $\text{SL}_2(\mathbb{Z}/\ell \mathbb{\Z})^r,$ since $\text{SL}_2(\mathbb{Z}/\ell \mathbb{\Z})$ is a self commutator for any prime $\ell\geq 5,$ 
 the image contains any matrix of type $(I,I,\cdots, \text{SL}_2(\mathbb{Z}/\ell \mathbb{\Z}),I,\cdots, I).$ 

Moreover, any $(I,I,\cdots, \text{PSL}_2(\mathbb{Z}/\ell \mathbb{\Z}),I,\cdots, I)\in \text{Occ}(\mr{G}_{f_1,f_2,\cdots,f_r,m})$ for any such integer $m,$ as long as all the prime factors of $m$ are larger than $5.$ Then it follows from Theorem 2 in the appendix of \cite{Coj} that $\mr{G}_{f_1,f_2,\cdots,f_r,m}$ contains $(I,I,\cdots, \text{PSL}_2(\mathbb{Z}/m \mathbb{\Z}),I,\cdots, I)$ and this completes the proof.
\end{proof} 
Now we need one more lemma to prove the main result of this section.
\begin{lemma}\label{lem:explimage}
Let $f_1,f_2,\cdots, f_r$ by any pairwise twist-inequivalent newforms without CM of the same weight $k$. Then there exists a finite set of primes $S_f$ such that,  
$$\mr{G}_{f_1,\cdots,f_r,m}=\Delta_{k}^{(r)}(m):=\left\{\begin{psmallmatrix}
    A_1 & & \\
    & A_2 & &\\
    & & \ddots &\\
    & & & A_r\\ 
  \end{psmallmatrix} \mid \det(A_1)=\cdots=\det(A_r)\in (\Z/m\Z^{*})^{k-1}\right\}.$$ 
for any integer $m$ co-prime to any prime from $S_f.$ 
\end{lemma}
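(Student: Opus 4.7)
The plan is to prove both containments separately, the forward one being almost tautological and the reverse one reducing to what we have already established. First I would observe that every determinant character $\det \rho_{f_i,m}$ is forced to equal the $(k-1)$-st power of the mod $m$ cyclotomic character $\chi_m$, simply because for any prime $p \nmid Nm$ one has $\det \rho_{f_i,m}(\mathrm{Frob}_p) \equiv p^{k-1} \pmod{m}$, and Chebotarev pins this down globally. In particular all the block determinants of any element in $\mr{G}_{f_1,\ldots,f_r,m}$ coincide and lie in $((\Z/m\Z)^*)^{k-1}$, which gives $\mr{G}_{f_1,\ldots,f_r,m} \subseteq \Delta_k^{(r)}(m)$.

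For the reverse containment I would argue as follows. Let $S_f$ be the union of the finite set of primes from Lemma~\ref{lem:notsurj} together with the exceptional primes from Corollary~\ref{lem:compimage} applied to each $f_i$, and assume $m$ is coprime to $S_f$. Pick any block diagonal matrix $(A_1,\ldots,A_r) \in \Delta_k^{(r)}(m)$, and write $d := \det A_1 = \cdots = \det A_r \in ((\Z/m\Z)^*)^{k-1}$. By Corollary~\ref{lem:compimage} applied to $f_1$, there exists $\sigma \in \mathrm{Gal}(\overline{\Q}/\Q)$ such that $\det \rho_{f_1,m}(\sigma) = d$. By the determinant identity above, the same $\sigma$ automatically satisfies $\det \rho_{f_i,m}(\sigma) = \chi_m^{k-1}(\sigma) = d$ for every $i$. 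Setting $B_i := \rho_{f_i,m}(\sigma)$, the tuple $(A_1 B_1^{-1}, \ldots, A_r B_r^{-1})$ lies in $\mathrm{SL}_2(\Z/m\Z)^r$, which by Lemma~\ref{lem:notsurj} is contained in $\mr{G}_{f_1,\ldots,f_r,m}$. Hence $(A_1,\ldots,A_r)$ itself lies in the image, as it is the product of two elements of the image.

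The main conceptual point, and the one I would flag as the potential obstacle, is the compatibility of the determinants across the different factors: a priori $\mathrm{SL}_2(\Z/m\Z)^r$ only gives us the kernel of the determinant map, and to reach every fiber we need to hit a tuple whose determinants are \emph{simultaneously} equal to a prescribed $d$. This is precisely where the equal-weight hypothesis is indispensable, because it collapses the $r$ independent determinant characters into a single character $\chi_m^{k-1}$, so that realizing the determinant on any one factor realizes it on all of them at once. If the weights were allowed to differ, the image would sit inside the smaller subgroup $\Delta_{k_1,\ldots,k_r}(m)$ of block matrices with independent determinants, and one would need a separate argument to surject onto the full determinant torus; under the standing assumption this difficulty disappears.
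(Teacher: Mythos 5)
Your proof is correct and takes a genuinely different — and arguably more transparent — route than the one in the paper. The paper proceeds by induction on $r$: the base case $r=2$ is delegated to the proof of Lemma 3.3 of Jones \cite{Jones13}, and the inductive step runs a Goursat's lemma argument, classifying the possible common quotients of $\Delta_k^{(r-1)}(m)$ and $\Delta_k(m)$ compatible with $\mathrm{SL}_2(\Z/m\Z)^{r}\subseteq \mr{G}_{f_1,\ldots,f_r,m}$ (Lemma~\ref{lem:notsurj}) and with each projection being onto $\Delta_k(m)$ (Corollary~\ref{lem:compimage}), then identifying the resulting fiber product with $\Delta_k^{(r)}(m)$. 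You instead exploit the equal-weight hypothesis directly: since $\det\rho_{f_i,m}=\chi_m^{k-1}$ is the \emph{same} global character for every $i$, the containment $\mr{G}_{f_1,\ldots,f_r,m}\subseteq\Delta_k^{(r)}(m)$ is immediate, and the reverse containment follows by lifting the prescribed common determinant $d$ through any single factor (Corollary~\ref{lem:compimage}) to get $\sigma$ with $\det\rho_{f_i,m}(\sigma)=d$ simultaneously for all $i$, and then correcting by an element of $\mathrm{SL}_2(\Z/m\Z)^{r}$, which lies in the image by Lemma~\ref{lem:notsurj}. Both arguments rest on exactly the same two inputs (Corollary~\ref{lem:compimage} and Lemma~\ref{lem:notsurj}), but yours avoids Goursat's lemma and the appeal to \cite{Jones13} entirely and handles all $r$ in one stroke rather than by induction, which is a real simplification. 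Your closing observation about where the equal-weight hypothesis enters — that it collapses the $r$ determinant characters into one, so that surjectivity on a single factor's determinant suffices — is precisely the structural point, and your remark about what would change with unequal weights (landing in $\Delta_{k_1,\ldots,k_r}(m)$ with independent determinant conditions) is also accurate.
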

\begin{proof}
It follows from Corollary~\ref{lem:compimage}, and by induction that each projection $$\pi_i:\mr{G}_{f_1,\cdots,f_r,m}\to \text{GL}_2(\Z/m\Z)$$
has the property that $\text{Im}(\pi_i)=\Delta_k(m),~\forall 1\leq i\leq r.$ For $r=2,$ the result follows from the proof of Lemma 3.3 in \cite{Jones13}, combining with Lemma~\ref{lem:notsurj}. Note that we are using Lemma~\ref{lem:notsurj} to rule out the case (b) of Lemma 3.3 in \cite{Jones13}.

Now, by induction, 
$$\mr{G}_{f_1,\cdots,f_{r-1},m}=\left\{\begin{psmallmatrix}
    A_1 & & \\
    & A_2 & &\\
    & & \ddots &\\
    & & & A_r\\ 
  \end{psmallmatrix} \mid \det(A_1)=\cdots=\det(A_{r-1})\in (\Z/m\Z^{*})^{k-1}\right\}$$
  and $\mr{G}_{f_r,m}=\Delta_k(m).$ If $\mr{G}_{f_1,\cdots,f_{r},m}$ does not have the desired image, then by Goursat's lemma (Lemma 3.2 in \cite{Jones13}) there exists a normal subgroup $N_1$ of $\mr{G}_{f_1,\cdots,f_{r-1},m}$ and a normal subgroup $N_2$ of $\mr{G}_{f_r,m}$ and an isomorphism $\psi:\mr{G}_{f_1,\cdots,f_{r-1},m}/N_1\to \mr{G}_{f_r,m}/N_2$ such that 
  $$\mr{G}_{f_1,\cdots,f_r,m}=\{(g_1,g_2) \mid \psi(g_1N_1)=g_2N_2\}.$$
  If $N_2$ contains $\text{SL}_2(\Z/m\Z)$ then clearly $N_1$ is contained in $\text{SL}_2(\Z/m\Z)^{r-1}$, because
  $$N_1\times \text{SL}_2(\Z/m\Z) \subseteq N_1\times N_2 \subseteq \Delta_k^{(r)}(m).$$
 Due to the isomorphism $\psi$, we have $N_1=\text{SL}_2(\Z/m\Z)^{r-1}$ and $N_2=\text{SL}_2(\Z/m\Z).$ In particular, 
 $$\mr{G}_{f_1,\cdots,f_{r-1},m}/N_1= \mr{G}_{f_r,m}/N_2=(\Z/m\Z)^{*}.$$
 This implies that $\psi$ must be the identity map, because $\mr{G}_{f_1,\cdots,f_r,m}$ is the graph of $\psi,$ which of course lies inside $\Delta^{(r)}_k(m).$ The proof is now complete.
\end{proof}

With this, we are done with all the preparations to prove the main result of this section.

\subsection{Distribution of Fourier coefficients}
In this section, we study $\{a(n) \pmod m\}$ when $n$ has only finitely many prime factors. The one prime factor case is just an immediate consequence of Lemma~\ref{lem:explimage} and Chebotarev's density theorem when $a$ is given by a newform. For a larger family of cuspforms, we have the following result when we study over $n$ with $\omega(n)=1$.

\begin{proposition}\label{thm:expldist}
Let $f=c_1f_1+c_2f_2\cdots+c_rf_r$ be any cuspform with coefficients in $\Q.$ Assume that all the $f_i$ are pairwise twist-inequivalent newforms without CM of same weight $k$. Then there exists an integer $N_f$ such that for any integer $m$ co-prime to $N_f$ satisfying $m\sim \phi(m)$, and $2^{\omega(m)+r}=o(\mathcal{L}),$ where $\mathcal{L}$ is the smallest prime factor of $m$, the set
$$\{a(p) \pmod m \mid~p,~\mathrm{prime}\},$$
is equidistributed among the residue classes
modulo $m$.
\end{proposition}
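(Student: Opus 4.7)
The plan is to combine Chebotarev's density theorem, Lemma~\ref{lem:explimage}, and the matrix-counting Lemma~\ref{lem:existence}. First I would enlarge $N_f$ so that, in addition to the exceptional primes of Lemma~\ref{lem:explimage}, it absorbs all primes dividing the denominators of $c_1,\ldots,c_r$ and all primes dividing $\gcd(c_1,\ldots,c_r)$ (after clearing denominators). Then, for any $m$ coprime to $N_f$, each $c_i$ is a unit in $\Z/m\Z$, the congruence $a(p)\equiv\sum c_i a_{f_i}(p)\pmod m$ is well defined, and the linear form $L(t_1,\ldots,t_r):=c_1t_1+\cdots+c_rt_r$ is a surjection $(\Z/m\Z)^r\twoheadrightarrow \Z/m\Z$ with every fibre of size exactly $m^{r-1}$.

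For any prime $p\nmid mN_f$, $a(p)\equiv L\bigl(\mathrm{tr}\,\rho_{f_1,m}(\mathrm{Frob}_p),\ldots,\mathrm{tr}\,\rho_{f_r,m}(\mathrm{Frob}_p)\bigr)\pmod m$, and since trace is a conjugation invariant, the set
$$S_a:=\Bigl\{\mathrm{diag}(A_1,\ldots,A_r)\in\Delta_k^{(r)}(m)\ :\ L(\mathrm{tr}A_1,\ldots,\mathrm{tr}A_r)\equiv a\pmod m\Bigr\}$$
is a union of conjugacy classes of $\mathrm{Im}(\rho_{f_1,\ldots,f_r,m})=\Delta_k^{(r)}(m)$. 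By Chebotarev, the density of primes $p$ with $a(p)\equiv a\pmod m$ equals $\#S_a/\#\Delta_k^{(r)}(m)$, so the proposition reduces to showing this ratio is $m^{-1}(1+o(1))$ uniformly in $a$. I would compute $\#S_a$ by stratifying over the common block-determinant $\lambda\in((\Z/m\Z)^*)^{k-1}$ and the trace tuple $(t_1,\ldots,t_r)$ with $L(t_1,\ldots,t_r)\equiv a$; each stratum contributes $\prod_{i=1}^rN_{t_i,\lambda}(m)$, and by Lemma~\ref{lem:existence} this product equals $m^{2r}\bigl(1+O(2^{\omega(m)+r}/\mathcal L)\bigr)$. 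Summing over the $m^{r-1}$ admissible trace tuples and over the $\#((\Z/m\Z)^*)^{k-1}$ determinants yields $\#S_a=\#((\Z/m\Z)^*)^{k-1}\cdot m^{3r-1}(1+o(1))$, whereas the identical argument with the trace constraint removed (using $\sum_t N_{t,\lambda}(m)=\#\mathrm{SL}_2(\Z/m\Z)=m^3\prod_{\ell\mid m}(1-\ell^{-2})$) gives $\#\Delta_k^{(r)}(m)=\#((\Z/m\Z)^*)^{k-1}\cdot m^{3r}(1+o(1))$.

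The main technical point I anticipate is controlling the accumulated error in the $r$-fold product: Lemma~\ref{lem:existence} delivers a relative error $O(2^{\omega(m)}/\mathcal L)$ per block, so the telescoping expansion $(1+O(2^{\omega(m)}/\mathcal L))^r$ is absorbed into $o(1)$ precisely because $2^{\omega(m)+r}=o(\mathcal L)$ is in force. The companion hypothesis $m\sim\phi(m)$ is used to guarantee $\#\mathrm{SL}_2(\Z/m\Z)=m^3(1+o(1))$, i.e.\ to pass from Lemma~\ref{lem:existence} (which has additive shape $m^2+O(2^{\omega(m)}m^2/\mathcal L)$) to an honest relative asymptotic both in numerator and denominator, so that the ratio $\#S_a/\#\Delta_k^{(r)}(m)$ collapses cleanly to $m^{-1}(1+o(1))$. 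The only other bookkeeping item is ensuring that the fibre-size claim for $L$ is uniform in $a$, which is what the enlargement of $N_f$ to include primes dividing $\gcd(c_1,\ldots,c_r)$ is doing.
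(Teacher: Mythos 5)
Your proposal follows essentially the same route as the paper: both invoke Lemma~\ref{lem:explimage} to identify the image, reduce via Chebotarev to counting matrices in $\Delta_k^{(r)}(m)$ whose trace tuple satisfies the linear constraint $L(t_1,\ldots,t_r)\equiv a$, stratify over the common block-determinant, apply Lemma~\ref{lem:existence} blockwise, count the $m^{r-1}$ admissible trace tuples, and use $m\sim\phi(m)$ together with $2^{\omega(m)+r}=o(\mathcal L)$ to collapse the ratio to $m^{-1}(1+o(1))$. One small slip in the bookkeeping: enlarging $N_f$ to absorb denominators and $\gcd(c_1,\ldots,c_r)$ does \emph{not} make every $c_i$ a unit in $\Z/m\Z$ (e.g.\ $c_1=2$, $c_2=3$, $m=6$); what it does guarantee is that $L$ is surjective with every fibre of size exactly $m^{r-1}$, which is the only fact your argument actually uses (and which the paper assumes without comment), so the proof is unaffected.
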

\begin{proof}
 Take any residue class $a\in \Z/m\Z.$ It follows from Lemma~\ref{lem:existence} that for each tuple $(a_1,a_2,\cdots,a_r)\in (\Z/m\Z)^r$ and $(\lambda_1,\lambda_2,\cdots, \lambda_r)\in ((\Z/m\Z)^{*})^r,$
$$\#\{\mathrm{diag}(A_1,\cdots, A_r)\in (\mathrm{GL}_{2}(\Z/m\Z))^r\mid \mathrm{det}(A_i)=\lambda_i,~\mathrm{tr}(A_i)=a_i~\forall 1\leq i\leq r\}.$$
is $m^{2r}+O(2^{r+\omega(m)}m^{2r}/\ell)$, where $\ell$ is the least prime factor of $m$. In particular, 
\begin{equation}\label{eqn:counting}
\# \{A\in \Delta_k^{(r)}(m)\mid \mathrm{tr}(A_1)=a_1,\cdots ,\mathrm{tr}(A_r)=a_r \}=\frac{\phi(m)}{(\phi(m),k-1)}(m^{2r}+O(2^{r+\omega(m)}m^{2r}/\ell)).
\end{equation}
Therefore, the set 
$$\{p \mid a_1(p)=a_1,~a_2(p)=a_2,\cdots a_r(p)=a_r\}$$
has density $\frac{\frac{\phi(m)}{(\phi(m),k-1)}(m^{2r}+O(2^{r+\omega(m)}m^{2r}/\ell))}{\#\Delta^{(r)}_k(m)},$ which is precisely $\frac{m^{2r}+O(2^{r+\omega(m)}m^{2r}/\ell)}{\#\text{SL}_2(\Z/m\Z)^r}.$
Now the number of tuples $(a_1,a_2,\cdots,a_r)$ with $c_1a_1+c_2a_2\cdots+c_ra_r=a$ is $m^{r-1}.$ Therefore we have $a(p)=a,$ for a set of primes $p$ with density 
\begin{equation}\label{eqn:onecase}
m^{r-1}\frac{m^{2r}+O(2^{r+\omega(m)}m^{2r}/\ell)}{\#\text{SL}_2(\Z/m\Z)^r}\sim \frac{1}{m},
\end{equation}
for any $m$ satisfying $n\sim \phi(m)$, and $2^{\omega(m)+r}=o(\mathcal{L}),$ where $\mathcal{L}$ is the smallest prime factor of $m.$ 
    \end{proof}

Now to study $\{a(n) \pmod m\}$ with $n$ having more than one prime factor. For any integer $M\geq 1$, denote
$$N_M(x)=\{n\leq x \mid \omega(n)=M\}.$$
It is a classical result \cite{MV07} that $\#N_M(x)\sim \frac{1}{(M-1)!}\frac{x}{(\log x)} (\log \log x)^{M-1}$. We now need the following generalization. \footnote{For proof, the reader may refer to \href{https://mathoverflow.net/questions/156982/chebotarev-density-theorem-for-k-almost-primes}{https://mathoverflow.net/questions/156982/chebotarev-density-theorem-for-k-almost-primes}.} 
\begin{lemma}\label{lem:m-prime}
    Let $P_1$, $\ldots$, $P_r$ be disjoint subsets of the primes with density respectively $\alpha_1$, $\ldots$, $\alpha_r$. Let $N(M;a_1,\ldots,a_r)$ denote the set of integers that are products of $M$ primes with exactly $a_j$ of these primes chosen from the set $P_j$. Let us assume that all $\alpha_j\geq 0, ~a_j \ge 1$, then for fixed any $M$ and as $x\to \infty$
$$ 
\sum_{\substack {{n\le x}\\ {n\in N(M;a_1,\ldots,a_r)}}} 1 
\sim M\left(\prod_{j=1}^{r} \frac{\alpha_j^{a_j}}{(a_j)!} \right)\frac{x}{(\log x)} (\log \log x)^{M-1}.
$$
\end{lemma}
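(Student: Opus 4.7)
The strategy is induction on $M$, generalizing Landau's classical argument for $\#\{n\le x : \omega(n) = k\} \sim x(\log\log x)^{k-1}/((k-1)!\log x)$ to primes drawn from prescribed subsets. The base case $M = 1$ forces exactly one $a_{j_0}=1$, and the count reduces to $\pi_{P_{j_0}}(x)\sim \alpha_{j_0} x/\log x$, which matches the claimed formula with $M = 1$.

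For the inductive step, start from the identity, valid for squarefree $n\in N(M;\vec a)$,
\begin{equation*}
M\cdot\mathbf{1}[n\in N(M;\vec a)]=\sum_{j=1}^{r}\sum_{\substack{p\mid n\\ p\in P_j}}\mathbf{1}[n\in N(M;\vec a)],
\end{equation*}
and sum over $n\le x$. Writing $n=pm$ with $p\in P_j$, $p\nmid m$, and $m\in N(M-1;\vec a-e_j)$ yields
\begin{equation*}
M\cdot\#N(M;\vec a)(x)=\sum_{j=1}^{r}\sum_{\substack{p\in P_j\\ p\le x}}\#N(M-1;\vec a-e_j)(x/p)+o\!\left(\tfrac{x(\log\log x)^{M-1}}{\log x}\right),
\end{equation*}
where the error absorbs the (lower-order) non-squarefree contribution and the $p\nmid m$ correction. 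Applying the inductive hypothesis and using $\tfrac{\alpha_j^{a_j-1}}{(a_j-1)!}=\tfrac{a_j}{\alpha_j}\cdot\tfrac{\alpha_j^{a_j}}{a_j!}$ reduces matters to the prime sum
\begin{equation*}
\Sigma_j(x):=\sum_{\substack{p\in P_j\\ p\le x}}\frac{(x/p)(\log\log(x/p))^{M-2}}{\log(x/p)}.
\end{equation*}

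The heart of the proof is showing $\Sigma_j(x)\sim \tfrac{\alpha_j M}{M-1}\cdot\tfrac{x(\log\log x)^{M-1}}{\log x}$. By the density hypothesis $\pi_{P_j}(t)\sim \alpha_j t/\log t$ and partial summation, $\Sigma_j$ equals $\alpha_j$ times the integral $x\int_{\log 2}^{\log x-\log 2}\frac{(\log(\log x-v))^{M-2}}{v(\log x-v)}\,dv$ (after the substitution $v=\log t$). The partial-fraction identity $\frac{1}{v(\log x-v)}=\frac{1}{\log x}\bigl(\frac{1}{v}+\frac{1}{\log x-v}\bigr)$ splits this into two pieces: the $\frac{1}{\log x-v}$ piece integrates in closed form via $w=\log x-v$ to $\frac{(\log\log x)^{M-1}}{M-1}$, with dominant mass coming from $p$ close to $x$; and the $\frac{1}{v}$ piece, on which $\log(\log x-v)\sim\log\log x$ over the bulk of the range, asymptotes to $(\log\log x)^{M-1}$, log-uniformly spread over prime scales. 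Adding yields the factor $\frac{M}{M-1}$. Plugging back and summing over $j$ using $\sum_j a_j=M$ then gives $M\cdot\#N(M;\vec a)(x)\sim M^{2}\prod_i\frac{\alpha_i^{a_i}}{a_i!}\cdot\frac{x(\log\log x)^{M-1}}{\log x}$, from which the claim follows.

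The main difficulty is the accurate evaluation of $\Sigma_j(x)$: a naïve approximation $\log(x/p)\sim\log x$ only captures the $\frac{1}{v}$ piece and yields an incorrect leading constant $\frac{1}{M-1}$ short of the correct $\frac{M}{M-1}$ (the coincidence $\frac{M}{M-1}=2$ when $M=2$ can make this easy to overlook). The ``large prime'' range $\sqrt x<p\le x$, though sparse in cardinality, must be tracked carefully because the weight $1/\log(x/p)$ amplifies its contribution. The verification that non-squarefree $n$ and the $p\nmid m$ correction are negligible is routine, using for instance the elementary bound $\#\{n\le x: p^2\mid n\text{ for some prime }p,\,\omega(n)=M\}=O\!\bigl(x(\log\log x)^{M-2}/\log x\bigr)$.
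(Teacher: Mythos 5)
The paper provides no proof of Lemma~\ref{lem:m-prime}; a footnote simply points the reader to a MathOverflow post, so there is no in-paper argument to compare against. Your proposal is a correct elementary argument of the expected type. The identity $M\cdot\mathbf 1[n\in N(M;\vec a)]=\sum_j\sum_{p\mid n,\,p\in P_j}\mathbf 1[n\in N(M;\vec a)]$ for squarefree $n$, combined with the decomposition $n=pm$, is the standard Landau device, and you have correctly located the delicate step: the prime sum $\Sigma_j(x)$ has asymptotic constant $\tfrac{M}{M-1}$ rather than the constant $1$ that the naïve substitution $\log(x/p)\approx\log x$ produces, because the range $\sqrt x<p\le x/T$ supplies a fixed positive fraction of the mass. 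Your partial-fraction split of $\tfrac{1}{v(\log x-v)}$ does recover the missing $\tfrac{1}{M-1}$, and the algebra with $\tfrac{\alpha_j^{a_j-1}}{(a_j-1)!}=\tfrac{a_j}{\alpha_j}\tfrac{\alpha_j^{a_j}}{a_j!}$ together with $\sum_j a_j=M$ closes the induction. One can sanity-check the $\tfrac{M}{M-1}$ factor directly when $M=2$: $\sum_p\pi(x/p)$ counts ordered prime pairs $(p,q)$ with $pq\le x$, so it is $\sim 2\,\#\{n\le x:\omega(n)=2\}$, consistent with $\tfrac{M}{M-1}=2$.

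Two small corrections are in order. First, the heuristic that the $\tfrac1{\log x-v}$ piece has ``dominant mass coming from $p$ close to $x$'' is not accurate: after $w=\log x-v$ the integrand $(\log w)^{M-2}/w$ has density $u^{M-2}$ in $u=\log w$, so for $M\ge 3$ most of the mass sits at $w$ large, i.e.\ at \emph{small} $p$, while for $M=2$ it is spread essentially uniformly in $\log\log(x/p)$. The correct take-away is only that the contribution of large $p$ cannot be discarded, which your computation already captures. Second, both the lemma as stated and your induction implicitly require $\sum_j a_j=M$; the claimed asymptotic is false otherwise (for $r=1$, $P_1$ the set of all primes, $a_1=1$, $M=2$, the set $N(2;1)$ is empty while the right-hand side is nonzero). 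You should say so explicitly, and also note that when an inductive call reaches $a_j-1=0$ the quantity $N(M-1;\vec a-e_j)$ coincides with the count obtained by simply deleting $P_j$, with the formula's factor $\alpha_j^0/0!=1$ remaining consistent — so the induction survives. The deferred error bookkeeping (non-squarefree $n$, the $p\nmid m$ correction, uniformity of the inductive hypothesis in $y=x/p$, and the cutoff $p\le x/T$ needed before the hypothesis applies) is indeed routine, and your sketch of it is adequate.
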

We now consider the factorizations of any $a\in \mathbb{Z}/m\mathbb{Z}$ in-to $M$ many terms. Let us write a factorization $a=a_1a_2\cdots a_M$. It may be possible that some of the $a_i$ repeats in this factorization. Now given any tuple $\vec{a}=(a_1,a_2,\cdots, a_M)\in (\mathbb{Z}/m\Z)^M,$ denote $p(\vec{a})=a_1a_2\cdots a_{M}.$ We say that two vectors $\vec{a_{1}}$ and $\vec{a_2}$ are equivalent, i.e. $\vec{a_{1}}\sim_{S_M}\vec{a_{2}}$ if and only if they differ by a permutation in $S_M$. Given any element $\vec{a}\in (\mathbb{Z}/m\Z)^M$, denote $n_{\vec{a}}=\prod_{1\leq i\leq k}n_i!$, where $a_1,a_2,\cdots, a_k$ are the set of all distinct terms that appear in $\vec{a}$ with $a_i$ appearing $n_i$ times. In particular, we have $\sum_{1\leq i\leq k} n_i=M$. We shall use these notations to study the case of newforms in the next theorem. 

To generalize that, we need to work with $M_{r\times M}(\Z/m\Z)$, the ring of matrices over $\Z/m\Z$ with $r$-rows and $M$-columns. Then we consider the natural action of $S_M$ on the columns of $M_{r\times M}(\Z/m\Z)$. Given any element $A\in M_{r\times M}(\Z/m\Z)$, denote $C_1(A),~C_2(A),\cdots C_M(A)$ and $R_1(A),~R_2(A),\cdots R_r(A)$ respectively be the columns, and the rows of $A$. Moreover, denote $n_{A}$ to be the number $\prod_{1\leq i\leq k}n_i!$, where $C_1,C_2,\cdots, C_k$ be the set of all distinct columns that appear in $A$ with $C_i$ appearing $n_i$ times.

\begin{theorem}\label{thm:thm0}
Let $M\geq 1$ be any integer, and $f=c_1f_1+c_2f_2\cdots+c_rf_r$ be any cuspform with coefficients in $\Q.$ Assume that all the $f_i$ are pairwise twist-inequivalent newforms without CM of same weight $k$. Then there exists an integer $N_f$ such that for any integer $m$ co-prime to $N_f$ satisfying $m\sim \phi(m)$, and $2^{\omega(m)+r}=o(\mathcal{L}),$ where $\mathcal{L}$ is the smallest prime factor of $m$, the following asymptotic formula holds for any tuple $\vec{a}=(a_1,a_2,\cdots, a_r)\in (\Z/m\Z)^r$.
    $$\frac{\#\{n\in N_M(x) \mid a_1(n)=a_1,a_2(n)=a_2\cdots, a_r(n)=a_r\}}{\#N_M(x)}\sim d_{\vec{a}}(m)\frac{1}{m^{rM}},$$
    for some $d_{\vec{a}}(m)>0$, which is an effectively computable constant.
   \end{theorem}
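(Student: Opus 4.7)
The plan has four movements: reduce to squarefree $n$ and use multiplicativity, compute the density of primes lying in a prescribed tuple-class via the joint Galois image, apply Lemma~\ref{lem:m-prime} to count $M$-almost primes with prescribed column data, and finish with a combinatorial sum that identifies $d_{\vec a}(m)$.

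The integers $n\le x$ with $\omega(n)=M$ that are not squarefree form a subset of $N_M(x)$ of relative density $O(1/\log\log x)$ and may be discarded. For squarefree $n=p_1\cdots p_M$, multiplicativity of each Hecke eigenform $f_i$ gives $a_i(n)=\prod_{j=1}^M a_i(p_j)$, reducing the system $a_i(n)\equiv a_i\pmod m$ to $\prod_{j=1}^M a_i(p_j)\equiv a_i\pmod m$ for $1\le i\le r$. For $\vec c\in(\Z/m\Z)^r$, set $P_{\vec c}=\{p\text{ prime}:(a_1(p),\ldots,a_r(p))\equiv\vec c\pmod m\}$. Lemma~\ref{lem:explimage} identifies the image of $\rho_{f_1,\ldots,f_r,m}$ with $\Delta_k^{(r)}(m)$; applying Chebotarev together with the block-by-block matrix count of Lemma~\ref{lem:existence}---exactly as in the proof of Proposition~\ref{thm:expldist}, which is the case $M=1$---gives that the density of $P_{\vec c}$ is $(1+o(1))/m^r$, uniformly in $\vec c$, under the standing hypothesis $2^{\omega(m)+r}=o(\mathcal L)$.

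Now fix a multi-type $\tau=((\vec d_1,m_1),\ldots,(\vec d_s,m_s))$ with distinct $\vec d_i\in(\Z/m\Z)^r$, positive integers $m_i$, and $\sum_i m_i=M$. Applying Lemma~\ref{lem:m-prime} to the partition of primes into the classes $P_{\vec d_i}$ shows that the number of squarefree $n=p_1\cdots p_M\le x$ whose multiset of tuples $\{(a_1(p_j),\ldots,a_r(p_j))\}_{j=1}^M$ equals $\tau$ is asymptotic to $\frac{M!}{m_1!\cdots m_s!}\cdot\frac{1}{m^{rM}}\cdot\#N_M(x)$. Summing over all $\tau$ with componentwise product $\prod_i\vec d_i^{\,m_i}=\vec a$ then yields the theorem with
\[
d_{\vec a}(m)=\sum_\tau\frac{M!}{m_1!\cdots m_s!}=\#\Bigl\{(\vec c_1,\ldots,\vec c_M)\in\bigl((\Z/m\Z)^r\bigr)^M:\prod_{j=1}^M\vec c_j=\vec a\Bigr\};
\]
in the matrix notation preceding the theorem, this rewrites as $d_{\vec a}(m)=\sum_A M!/n_A$ where $A$ ranges over column-unordered matrices in $M_{r\times M}(\Z/m\Z)$ whose $i$-th row has product $a_i$. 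Positivity of $d_{\vec a}(m)$ follows from the trivial factorization $\vec a=\vec a\cdot(\vec 1)\cdots(\vec 1)$.

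The main subtlety is the combinatorial bookkeeping when several columns of $A$ coincide---this is precisely the purpose of the symmetry factor $n_A$ introduced before the theorem---while the uniformity of the Chebotarev estimate across the at most $m^{r(M-1)}$ relevant tuple-classes is handled routinely, as $M$ is fixed and the multiplicative error $(1+o(1))^M=1+o(1)$ is absorbed under the standing hypothesis on $\mathcal L$. A minor additional care is needed to check that replacing $\omega(n)=M$ by the restriction to squarefree $n$ in the asymptotic does not distort the constant, which follows from the density computation for non-squarefree $n\in N_M(x)$ noted above.
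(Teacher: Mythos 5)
Your argument follows the paper's own proof: Proposition~\ref{thm:expldist} (via Lemma~\ref{lem:explimage} and Lemma~\ref{lem:existence}) gives the density $\sim 1/m^r$ of the prime tuple-classes $P_{\vec c}$, Lemma~\ref{lem:m-prime} counts $M$-almost primes with prescribed column types, and the combinatorial sum over multi-types reproduces $d_{\vec a}(m)=\sum_A M!/n_A$ exactly as in the paper. Your explicit reduction to squarefree $n$---noting the non-squarefree part of $N_M(x)$ has relative density $O(1/\log\log x)$, needed because $a_i(p^e)$ for $e\ge 2$ is not simply a product of prime-indexed values---is a worthwhile detail the paper's terse write-up leaves implicit, but otherwise the routes coincide.
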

\begin{proof}
Let us first do the case $r=1$. We use Lemma~\ref{lem:m-prime} and Proposition~\ref{thm:expldist} to get
\begin{equation}\label{eqn:oneterm}
\frac{\#\{n\in N_M(x) \mid a(n)=a\}}{\#N_M(x)}  \sim \frac{1}{m^{M}}\sum_{\substack{\vec{a}\in (\mathbb{Z}/m\Z)^M/S_M\\ p(\vec{a})=a}}\frac{M!}{n_{\vec{a}}}.
\end{equation}
Now for the case $r\geq 1$, we use the proof of Proposition~\ref{thm:expldist} for $r\geq 1$. More precisely, for each tuple $(a^{(1)},a^{(2)},\cdots, a^{(r)})\in (\Z/m\Z)^r$, the set
$$\{p\mid a_1(p)=a^{(1)},a_2(p)=a^{(2)},\cdots, a_r(p)=a^{(r)}\}$$
has density $\sim \frac{1}{m^r}$. Similarly, as in the case $r=1$, it follows from Lemma~\ref{lem:m-prime} that the required proportion is given by
$$\frac{1}{m^{rM}}\sum_{\substack{A\in M_{r\times M}(\Z/m\Z)/S_M\\ p(R_1(A))=a_1,~p(R_2(A))=a_2,\cdots p(R_r(A))=a_r}} \frac{M!}{n_{A}},$$
the proof is now complete taking $d_{\vec{a}}(m)$ to be the summation in the above line.

\end{proof}

\begin{remark}\rm
The reader may note that the proportion that we are having in (\ref{eqn:oneterm}) is indeed less than $1$, as $\#\{\vec{a}\in (\mathbb{Z}/m\Z)^M/S_M\}\ll m^{M}/M!$, and all the $n_{\vec{a}}\geq 1$. Moreover, the condition should $p(\vec{a})=a$ should hold roughly with proportion $\frac{1}{m}$. Therefore, the proportion in \ref{eqn:oneterm} should roughly be $\frac{1}{m}$.  
\end{remark}
\begin{remark}\rm
Writing $m=\ell_1^{e_1}\ell_2^{e_2}...\ell_s^{e_s},$ for any $a\in\Z/m\Z$ we have an element in $\Z/\ell_1^{e_1}\Z\times\Z/\ell_2^{e_2}\Z\times...\times\Z/\ell_s^{e_S}\Z$ of the form $(u_1\ell_1^{n_1}, u_2\ell_2^{n_2},...,u_s\ell_s^{n_s})$ where $u_i$ are units in $\Z/\ell_i^{e_i}$, and $0\leq n_i\leq e_i.$ Then the number of ways of writing $a$ as product of $M$ elements in $(\Z/m\Z)$ (allowing repetition) is $$\phi(m)^{M-1}\prod\limits_{j=1}\limits^{s}\left(\sum\limits_{i_{M-2}=0}\limits^{n_j}\sum\limits_{i_{M-3}=0}\limits^{i_{M-2}}...\sum\limits_{i=0}\limits^{i_1}1\right).$$

Let us first look for the case $M=2$ and $m=\ell^e.$ Let us write $a=u\ell^n,$ where $u$ is a unit in $\Z/\ell^e\Z$ and $n\in\{0,1,2,...,e\}.$  Then the number of times that $a$ as a product of $M$ number of terms is the same as the number of times $\ell^n$ as a product of $M$ terms. Hence it is enough to compute the number of ways of writing $a$ as a product of two terms only for $\ell^n.$ Any $\ell^n$ can be written as the product of two terms in the following ways $$\ell^n=u(u^{-1}l^n)=(u\ell)(u^{-1}\ell^{n-1})...=(u\ell^{n-1})(u^{-1}\ell)=(u\ell^n)u^{-1},$$ 
for any unit $u\in \Z/\ell^e \Z.$ Hence the number of times $\ell^n$ can be written as product of two terms is  $(n+1)\phi(\ell^e)$. 

In general, the number of times $a$ can be written as product of $M$ terms is $$\left(\sum\limits_{i_{M-2}=0}\limits^{n}\sum\limits_{i_{M-3}=0}\limits^{i_{M-2}}\cdots \sum\limits_{i=0}\limits^{i_1}1\right)\phi(\ell^e)^{M-1},$$
This can be realized by noting that a product of $M$ terms is also a product of two terms; one is a product of $M-1$ terms. Then for any $a\in\Z/m\Z$ we have an element in $\Z/\ell_1^{e_1}\Z\times\Z/\ell_2^{e_2}\Z\times...\times\Z/\ell_s^{e_S}\Z$ under the natural isomorphism say $(u_1\ell_1^{n_1}, u_2\ell_2^{n_2},...,u_s\ell_s^{n_s})\in\Z/\ell_1^{e_1}\Z\times\Z/\ell_2^{e_2}\Z\times...\times\Z/\ell_s^{e_S}\Z,$ where $u_i$ are units and $0\leq n_i\leq e_i.$ Counting the number of ways of writing $a$ as a product of $M$ terms is equal to the product of the number of ways of writing each coordinate of $a$ as a product of $M$ terms.

\end{remark}

\section{On the residue classes }\label{sec:wring}
Let $f(z)$ be a cuspform with coefficients in $\mathbb{Q},$ and $m$ be any integer. In this section, we give a lower bound for the number of elements in the set $\{a(n) \pmod m\}_{n\in I},$ where $I$ is some small set. We know from section~\ref{sec:basicgalois} that the set is $\Z/m\Z,$ when $I$ is a large set, and $f(z)$ is of a certain type. In this section, we shall consider a small set $I.$ Shparlinski in \cite{Shparlinski2005} considered this for the Ramanujan-tau function. Arguing along the same lines, we first have the following generalization.

\begin{lemma}\label{lem:sizehecke} Let $f(z)$ be any Hecke eigenform, and $m$ be any integer. For any set of primes $S,$ consider 
$$N_{f,m,S}(x)= \#\{a(p^i)\pmod {m} \mid p\in S,~p\leq \sqrt{x},~i=1,2\}.$$ If $S$ has a positive density, then for any $x\geq 1$, 
$$N_{f,m,S}(x)\gg_{S}x^{1/4+o(1)},$$
provided that $x^{1/2}\leq L,$ where $L$ is the largest prime factor of $m.$ More precisely, $$\#\{a(p)\pmod {m} \mid p\in S,~p\leq \sqrt{x}\}\gg_{S}x^{1/4+o(1)}$$ 
 or
 $$\#\{a(p^2)\pmod {m} \mid p\in S,~p\leq \sqrt{x}\}\gg_{S}x^{1/4+o(1)}.$$ In particular $N_{f,L,S}(m^{2\varepsilon})\gg_{S} m^{\frac{\varepsilon}{2}+o(1)},$
for any $0<\varepsilon<1$, provided that $m^{\varepsilon}\leq L.$
\end{lemma}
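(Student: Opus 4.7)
The plan is to use the standard Hecke multiplicativity relation
$$a(p^2) = a(p)^2 - p^{k-1}$$
(valid for any Hecke eigenform of weight $k$ with trivial nebentypus) to tie together the two sets whose cardinalities appear in the statement. Writing $N_1(x) = \#\{a(p)\pmod m : p\in S,\, p\leq \sqrt{x}\}$ and $N_2(x) = \#\{a(p^2)\pmod m : p\in S,\, p\leq \sqrt{x}\}$, the eigenform identity shows that the pair $(a(p),a(p^2))\pmod m$ determines $p^{k-1}\pmod m$, and \emph{a fortiori} $p^{k-1}\pmod L$, where $L\mid m$ is the largest prime factor of $m$. Hence
$$N_1(x)\cdot N_2(x) \;\geq\; \#\{p^{k-1}\pmod L : p\in S,\, p\leq \sqrt{x}\}.$$

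The next step is to bound the right-hand side from below. Since $\sqrt{x}\leq L$ and $L$ is prime, any two distinct primes $p_1,p_2\leq\sqrt{x}$ are distinct modulo $L$ (with the single possible exception of $p=L$). The map $u\mapsto u^{k-1}$ on $(\Z/L\Z)^*$ has fibers of size at most $\gcd(k-1,L-1)\leq k-1$, where $k$ is fixed by $f$. Combining this with the prime number theorem for the set $S$ of positive density gives
$$\#\{p^{k-1}\pmod L : p\in S,\, p\leq\sqrt{x}\} \;\geq\; \frac{\pi_S(\sqrt{x})-1}{k-1} \;\gg_S\; \frac{\sqrt{x}}{\log x}.$$

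Putting the two inequalities together yields $N_1(x)\cdot N_2(x)\gg_S \sqrt{x}/\log x$, so at least one of the two factors is $\gg_S x^{1/4}/(\log x)^{1/2} = x^{1/4+o(1)}$. This proves both the dichotomy form and the combined lower bound for $N_{f,m,S}(x)$. The last assertion is the immediate specialization $x=m^{2\varepsilon}$, under which the hypothesis $\sqrt{x}\leq L$ becomes $m^{\varepsilon}\leq L$ and the conclusion becomes $N_{f,L,S}(m^{2\varepsilon})\gg_S m^{\varepsilon/2+o(1)}$.

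There is no real obstacle in this argument; the only mildly subtle point is the observation that one needs only the distinctness of $p\pmod L$ (not modulo $m$), which is precisely what the hypothesis $\sqrt{x}\leq L$ buys. The entire proof thus hinges on the algebraic identity $a(p^2)=a(p)^2-p^{k-1}$ combined with a pigeonhole/Cauchy--Schwarz-type argument on the product set $N_1\cdot N_2$.
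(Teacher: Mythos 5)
Your proof is correct and follows exactly the approach the paper indicates: the paper simply cites Shparlinski \cite{Shparlinski2005} and notes that the argument rests on the Hecke relation $a(p^2)=a(p)^2-p^{k-1}$ together with the fact that $p^{k-1}\pmod m$ takes $\gg\sqrt{x}$ distinct values for $p\le\sqrt{x}\le L$, which is precisely the pigeonhole argument you spell out (with the extra, helpful detail that the fibers of $u\mapsto u^{k-1}$ on $(\Z/L\Z)^{*}$ have size $\le\gcd(k-1,L-1)\le k-1$, a bounded constant).
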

The proof is essentially the same as in \cite{Shparlinski2005}. It follows from the Hecke relation $a(p^2)=a(p)^2-p^{k-1},$ and the fact that the number of distinct residue classes $p^{k-1} \pmod m,~p\leq \sqrt{x}\leq L$ is $\gg \sqrt{x}$. Given any integer $m,$ the condition $m^{\varepsilon}\leq L$ is, of course, satisfied for any small $\varepsilon>0.$ However if we want to take any $1/2\leq \varepsilon< 1,$ we should have that $\nu_L(m)=1$ and $L$ is sufficiently larger than the other prime factors of $m$.

Let $f_1,f_2,\cdots, f_r$ be a set of eigenforms of the same weight $k$ and level $N$, and consider 
\begin{equation}\label{eqn:setp}
\mathcal{S}_{f_1,f_2,\cdots,f_r,m}=\left\{p \mid a_1(p)\equiv a_2(p)\cdots\equiv a_r(p)\pmod{m},~p^{k-1}\equiv 1 \pmod m\right\}.
\end{equation}
Then we have the following.
\begin{lemma}
If all of the $f_1,f_2,\cdots,f_r$ are newforms without CM, then $\mathcal{S}_{f_1,f_2,\cdots,f_r,m}$ has a positive density of primes, if it is non-empty. Otherwise there exists an integer $N_f,$ such that for any integer $m$ co-prime to $N_f,$
$$a_i(p)\equiv \pm a_j(p)\pmod m,~p^{k-1}\equiv 1\pmod m~\forall~1\leq i,j\leq r,$$
for a set of primes $p$ with positive density.
\end{lemma}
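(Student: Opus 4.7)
The plan is to reduce both assertions to Chebotarev's density theorem applied to the Galois representation $\rho := \rho_{f_1,\ldots,f_r,m}$ of Section~\ref{sec:galois}, whose image $G := \mr{G}_{f_1,\ldots,f_r,m}$ sits inside the block-diagonal group $\prod_{i=1}^{r}\mathrm{GL}_2(\Z/m\Z)$ and for which $\det \rho_{f_i}(\mathrm{Frob}_p) \equiv p^{k-1} \pmod m$ (the nebentypus being trivial, as our coefficients lie in $\mathbb{Q}$). I would choose $N_f$ so that $\rho$ is unramified at every prime outside a finite set depending only on the $f_i$, and so that the conclusions of Lemma~\ref{lem:equiv}, Lemma~\ref{lem:notsurj}, and Lemma~\ref{lem:explimage} are all available for $m$ coprime to $N_f$.

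For the first claim, let $T \subseteq G$ denote the subset of block matrices $A = \mathrm{diag}(A_1,\ldots,A_r)$ with $\mathrm{tr}(A_1) \equiv \cdots \equiv \mathrm{tr}(A_r) \pmod m$ and $\det(A_i) \equiv 1 \pmod m$ for each $i$. Since trace and determinant are class functions, $T$ is closed under $G$-conjugation, hence a union of conjugacy classes of $G$. If $\mathcal{S}_{f_1,\ldots,f_r,m} \neq \emptyset$, then there is at least one unramified prime $p$ with $\rho(\mathrm{Frob}_p) \in T$, forcing $|T| \geq 1$. Chebotarev then yields that the density of primes with Frobenius in $T$ equals $|T|/|G| > 0$, which is exactly the asserted positive density.

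For the \emph{otherwise} case, apply the same template to the enlarged, conjugation-invariant subset $T' \subseteq G$ consisting of matrices $A$ with $\mathrm{tr}(A_i) \equiv \pm \mathrm{tr}(A_j) \pmod m$ for every pair $i,j$ and $\det(A_i) \equiv 1 \pmod m$. The identity $I_{2r}$ lies in $G$ (every subgroup contains the identity) and trivially satisfies all block traces equal to $2$ and all block determinants equal to $1$; consequently $T' \neq \emptyset$ unconditionally. Chebotarev then produces a set of primes of density at least $1/|G| > 0$ satisfying the required $\pm$-congruence together with $p^{k-1} \equiv 1 \pmod m$.

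The main obstacle is structural rather than analytic: one must use the Galois-image theory of Section~\ref{sec:basicgalois} to control the composite-modulus representation $\rho$ via the Chinese remainder decomposition $G \simeq \prod_{\ell^e \| m} G_{\ell^e}$, and one must absorb into $N_f$ the finitely many exceptional primes coming from Ribet's theorem, from Lemma~\ref{lem:equiv}, and from the ramification of the individual $\rho_{f_i}$. Once this bookkeeping is done, both halves of the lemma reduce to routine applications of Chebotarev to nonempty unions of conjugacy classes.
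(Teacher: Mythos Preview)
Your treatment of the first assertion matches the paper's: both of you note that the trace/determinant conditions cut out a conjugation-stable subset of $G=\mr{G}_{f_1,\ldots,f_r,m}$, so nonemptiness together with Chebotarev gives positive density.

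For the ``otherwise'' clause the two arguments genuinely diverge. The paper argues structurally: if $\mathcal{S}_{f_1,\ldots,f_r,m}$ were empty then (\ref{eqn:factors}) forces $G\not\supseteq\mathrm{SL}_2(\Z/m\Z)^r$, so by Lemma~\ref{lem:explimage} the $f_i$ cannot all be pairwise twist-inequivalent; passing to representatives $f_{i_1},\ldots,f_{i_{r'}}$ of the twist classes and reapplying Lemma~\ref{lem:explimage}, one recovers $\mathrm{SL}_2(\Z/m\Z)^{r'}$ inside the smaller image and reads off the $\pm$ relations from the quadratic characters. Your route is shorter and more elementary: you simply observe that $I_{2r}\in G$ already lies in $T'$, so Chebotarev produces a positive-density set of primes $p$ with every $a_i(p)\equiv 2\pmod m$ and $p^{k-1}\equiv 1\pmod m$. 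In fact your observation proves more than you state: since all block traces of $I_{2r}$ are \emph{equal} (not just equal up to sign), the identity lies in $T$ as well, so $\mathcal{S}_{f_1,\ldots,f_r,m}$ is never empty and the ``otherwise'' branch is vacuous. What the paper's longer argument buys is the structural link between the signs $\sigma_i\in\{\pm 1\}$ and actual quadratic twists among the $f_i$; this is what is exploited downstream in Corollary~\ref{cor:lbd} and Corollary~\ref{cor:cuspcase}, where one wants the relation $a_i(p)\equiv\pm a_j(p)$ to persist for \emph{all} $p$ in a positive-density set (coming from a character), not merely for the completely-split primes your identity trick produces. Finally, your closing paragraph about needing the full image machinery of Section~\ref{sec:basicgalois} and absorbing exceptional primes into $N_f$ is unnecessary for your own argument: the identity trick requires only that $\rho$ exist and Chebotarev apply, nothing from Lemmas~\ref{lem:equiv}--\ref{lem:explimage}.
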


\begin{proof}
Let us first start with recalling the Galois representation from (\ref{eqn:map}) $$\rho_{f_1,f_2,\cdots,f_r,m}:\mathrm{Gal}(\overline{\Q}/\Q)\to \mathrm{GL}_{2r}(\Z/m\Z).$$
Now consider 
$$C=\left\{\begin{psmallmatrix}
    A_1 & & \\
    & A_2 & &\\
    & & \ddots &\\
    & & & A_r\\ 
  \end{psmallmatrix}\in \mathrm{SL}_{2}(\Z/m\Z)^r~\big|~ \mathrm{tr}(A_1)=\mathrm{tr}(A_2)=\cdots=\mathrm{tr}(A_r)\right\}.$$
If $\mathcal{S}_{f_1,f_2,\cdots,f_r,m}$ is non-empty, then $C\cap \mathrm{im}(\rho_{f_1,f_2,\cdots,f_r,m})$ is also non-empty, and we have the required positive density due to Chebotarev's density theorem. 

On the other hand if $\mathcal{S}_{f_1,f_2,\cdots,f_r,m}$ is empty, then it follows from (\ref{eqn:factors}) that $\mr{G}_{f_1,f_2,\cdots,f_r,m}$ does not contain $\text{SL}_2(\Z/m\Z)^r.$ Then Lemma~\ref{lem:explimage} implies that there is more than one equivalence class in the set $\{f_1,f_2,\cdots,f_r\}.$ Let $f_{i_1},f_{i_2},\cdots,f_{i_{r'}}$ be the representatives from each class. Again applying Lemma~\ref{lem:explimage}, we see that $\mr{G}_{f_{i_1},f_{i_2},\cdots,f_{i_{r'}},m}$ contains $\text{SL}_2(\Z/m\Z)^{r'}.$ The proof is now complete due to (\ref{eqn:factors}). Also, note that the condition $p^{k-1}\equiv 1 \pmod m$ is satisfied because we are working with the conjugacy classes in $\text{SL}_2(\Z/m\Z).$
\end{proof}

Let us now consider $f_1,f_2,...,f_r$ be the Hecke eigenforms with Fourier expansion $f_i(z)=\sum\limits_{n=1}\limits^{\infty}a_i(n)z^n,$ and $a_i(n)\in \mathbb{Q},~\forall 1\leq i\leq r.$ For any homogeneous polynomial $P(x_1,x_2,...,x_r)$ with $P(\pm1,\pm1,...,\pm1)\neq 0,$ set $a(n)=P(a_1(n),a_2(n),...,a_r(n)).$ Consider the quantity $N_{f,m,S}(x)$ as in Lemma~\ref{lem:sizehecke}. Since we are assuming that $$P(\pm1,\pm1,...,\pm1)\neq 0,$$ we can also assume that $P(\pm1,\pm1,...,\pm1)\not\equiv 0 \pmod m$, for any integer $m$ with sufficiently large prime factors. 

\begin{corollary}\label{cor:lbd}
Suppose that $f_1,f_2,\cdots, f_r$ are all newforms without CM. Let $m$ be any integer with sufficiently large prime factors satisfying that $P(\pm1,\pm1,...,\pm1)\not\equiv 0 \pmod m$, and $L$ be the largest prime factor of $m$ satisfying that $m^{\varepsilon}\leq L$ for some $0<\varepsilon<1.$ Then we have,
$$N_{f,L,S}(m^{2\varepsilon})\gg \frac{m^{\varepsilon/2+o(1)}}{d},~d=\mathrm{deg}(P).$$

\end{corollary}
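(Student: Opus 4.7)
The plan is to reduce the bound to the single-eigenform case already handled by Lemma~\ref{lem:sizehecke}, by leveraging the sign-pattern structure provided by the lemma immediately preceding this corollary. First I would apply that lemma to $f_1, f_2, \ldots, f_r$ to produce a set of primes $S'$ of positive density on which $p^{k-1} \equiv 1 \pmod{m}$ and $a_i(p) \equiv \epsilon_i(p)\, a_1(p) \pmod{m}$ for sign choices $\epsilon_i(p) \in \{\pm 1\}$ with $\epsilon_1 = 1$. Partitioning $S'$ according to the sign tuple $(\epsilon_1(p), \ldots, \epsilon_r(p))$ gives at most $2^r$ classes, so one class $S \subseteq S'$ of positive density corresponds to a fixed tuple $(\epsilon_1, \ldots, \epsilon_r) \in \{\pm 1\}^r$. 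By the standing hypothesis $P(\pm 1, \ldots, \pm 1) \not\equiv 0 \pmod{m}$, we have $c := P(\epsilon_1, \ldots, \epsilon_r) \not\equiv 0 \pmod{m}$, hence $c \not\equiv 0 \pmod{L}$ for the largest prime factor $L \mid m$.

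Next I would exploit the homogeneity of $P$ together with the Hecke relation. For $p \in S$, homogeneity of degree $d$ gives
$$a(p) = P(a_1(p), \ldots, a_r(p)) \equiv P(\epsilon_1, \ldots, \epsilon_r)\, a_1(p)^d \equiv c\, a_1(p)^d \pmod{m}.$$
The Hecke identity $a_i(p^2) = a_i(p)^2 - p^{k-1}$ together with $p^{k-1} \equiv 1 \pmod{m}$ yields $a_i(p^2) \equiv a_1(p)^2 - 1 \pmod{m}$ independently of the sign $\epsilon_i$, so
$$a(p^2) \equiv P(1, 1, \ldots, 1)\, (a_1(p)^2 - 1)^d \pmod{m},$$
where $P(1, \ldots, 1) \not\equiv 0 \pmod{m}$ by the same hypothesis.

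Finally I would apply Lemma~\ref{lem:sizehecke} to the single Hecke eigenform $f_1$ with the positive-density set $S$, which yields either $\#\{a_1(p) \bmod L : p \in S,\ p \leq m^\varepsilon\} \gg_S m^{\varepsilon/2+o(1)}$, or the analogous bound for $\{a_1(p^2) \bmod L\}$. In the first case, since the map $t \mapsto c\, t^d$ on $\mathbb{F}_L$ is at most $d$-to-$1$, we obtain $\#\{a(p) \bmod L\} \geq d^{-1} \cdot m^{\varepsilon/2+o(1)}$. In the second case, using $a_1(p)^2 \equiv a_1(p^2) + 1 \pmod{L}$, the polynomial map $t \mapsto P(1,\ldots,1)(t-1)^d$ is likewise at most $d$-to-$1$, giving the same lower bound for $\#\{a(p^2) \bmod L\}$. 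Either way, $N_{f, L, S}(m^{2\varepsilon}) \gg m^{\varepsilon/2+o(1)}/d$, as required.

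The main obstacle is essentially bookkeeping rather than genuine difficulty: one must ensure that the sign class selected from the preceding lemma remains a valid positive-density set when fed into Lemma~\ref{lem:sizehecke}, and that the non-vanishing of $P$ at sign tuples modulo $m$ transfers to non-vanishing modulo $L$ (automatic since $L \mid m$). Everything else is a direct consequence of homogeneity and the at-most-$d$-to-$1$ nature of degree-$d$ polynomial maps over $\mathbb{F}_L$.
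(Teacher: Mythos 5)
Your proof is correct and follows essentially the same route as the paper's: invoke the preceding lemma to obtain a positive-density set of primes on which all $a_i(p)$ agree up to sign and $p^{k-1}\equiv 1\pmod m$, exploit homogeneity of $P$ to express $a(p)$ and $a(p^2)$ as $c\cdot a_1(p)^d$ and $P(1,\dots,1)\cdot a_1(p^2)^d$ respectively, then invoke Lemma~\ref{lem:sizehecke} for $f_1$ and divide by $d$ since $t\mapsto t^d$ over $\mathbb{F}_L$ is at most $d$-to-one. You lay out the sign bookkeeping more explicitly than the paper does (pigeonholing on the $2^r$ sign tuples to pick a single class $S$ of positive density, and noting separately that the signs cancel in the $a_i(p^2)$ relation), which is a genuine clarification of a step the paper leaves implicit; the paper instead phrases this via an auxiliary polynomial $Q(x_1,\dots,x_r)=P(\pm x_1,\dots,\pm x_r)$ without tracking that the sign structure of $Q$ differs for $n=1$ versus $n=2$. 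One tiny slip: at the end you write that the relevant map is $t\mapsto P(1,\dots,1)(t-1)^d$, but after substituting $a_1(p)^2\equiv a_1(p^2)+1$ it collapses to $t\mapsto P(1,\dots,1)\,t^d$; this has no effect on the at-most-$d$-to-one conclusion.
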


\begin{proof}
If the set $\mathcal{S}_{f_1,f_2,\cdots,f_r,m}$ in (\ref{eqn:setp}) is non-empty, then it follows from Chebotarev's density theorem that for any $n\geq 1$, 
$$a(p^n)\equiv P(a_1(p^n),a_1(p^n),\cdots a_1(p^n))\pmod {m},$$
for a set of primes $p$ with positive density. Since $P(x_1,x_2,\cdots x_r)$ is a homogeneous polynomial, we have
$$P(a_1(p^n),a_1(p^n), \cdots a_1(p^n))\equiv P(1,1, \cdots ,1)a_1(p^n)^d\pmod m,$$
where $d$ is the degree of $P.$ On the other hand for any prime $p$ not in $\mathcal{S}_{f_1,f_2,\cdots,f_r,m},$  
$$a(p^n)\equiv Q(a_1(p^n),a_1(p^n),\cdots a_1(p^n)) \pmod m,~\forall n\geq 1$$
for a set of primes $p$ with positive density, where $Q(x_1,x_2,...,x_r)\equiv P(\pm x_1,\pm x_2,\cdots,\pm x_r)$. Since $P(x_1,x_2,..,x_r)$ is a homogeneous, $Q(x_1,x_2,...,x_r)$ is a homogeneous polynomial of degree $d=\deg(P)$. Hence we get,
$$Q(a_1(p^n),a_1(p^n),...a_1(p^n))\equiv P(\pm 1,\pm 1,\cdots ,\pm 1)a_1(p^n)^d\pmod m.$$
The proof now follows immediately from Lemma~\ref{lem:sizehecke} and by the assumption that $P(\pm 1,\pm 1,\cdots ,\pm 1)\not\equiv 0.$ Note that the factor $1/d$ is coming because an equation $x^d\equiv a \pmod L$ has at most $d$ roots over $\mathbb{F}_L$.
\end{proof}
\begin{remark}\rm
Here we are always concerned with when all the $c_i$ are in $\mathbb{Q}.$ The number of tuples $(c_1,c_2,\cdots, c_r)$ of height at most $H$ is $\sim$ $(2H/\zeta(2))^r$, see \cite[Theorem B.6.2]{hindry2013diophantine} Among them, the number of tuples $(c_1,c_2,\cdots, c_r)$ with $\sum_{i=1}^{m} \pm c_i=0$ is $\sim H^{r-1}.$ In the sense of heights, we are saying that almost any $f$ in $S_k(\mathbb{Q}, N)$ of the form $\sum\limits_{i=1}\limits^{r} c_if_i,~c_i\in \Q$, satisfy the condition in Corollary~\ref{cor:lbd}.
\end{remark}

Now to study the case when not all of the $f_i$ are newforms without CM, we need to count the number of points on the intersection of certain hypersurfaces. In this case, we have a weaker lower bound in the sense of a lesser exponent.
 \begin{lemma}\label{lem:explicitcoeffs}
 Let us consider $a(n)=\sum\limits_{i=1}\limits^{r}a_i(n),$ then for any integer $m\geq 1,$ the sum $\sum\limits_{i=1}\limits^{r}a_i(p)^m$ can be written as a linear combination of $a(p^{m'}),~0\leq m'\leq m,$ where the coefficients are polynomials in $p$ with coefficients in $\mathbb{Q}$. Moreover, the coefficient associated with $1$ (resp. $a(p)$) has the highest degree when $m$ is odd (resp. even).
 \end{lemma}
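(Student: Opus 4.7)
The plan is to iterate the Hecke recurrence in order to write $a_i(p)^m$ with coefficients that do not depend on $i$, then sum over $i$, and finally read off degrees via a Chebyshev-type closed form.

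Since every $f_i$ is a Hecke eigenform of weight $k$, one has
\[
a_i(p)\, a_i(p^n) \;=\; a_i(p^{n+1}) \;+\; p^{\,k-1}\, a_i(p^{n-1}), \qquad n\ge 1,
\]
with the coefficients $1$ and $p^{k-1}$ independent of $i$. An induction on $m$ therefore produces polynomials $c_{m,m'}(p)\in \mathbb{Q}[p]$, again the same for every $i$, such that
\[
a_i(p)^{m} \;=\; \sum_{m'=0}^{m} c_{m,m'}(p)\, a_i(p^{m'}).
\]
Summing over $i=1,\dots,r$ and using $a(p^{m'})=\sum_i a_i(p^{m'})$ yields the required decomposition
\[
\sum_{i=1}^{r} a_i(p)^m \;=\; \sum_{m'=0}^{m} c_{m,m'}(p)\, a(p^{m'}).
\]

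For the degree claim I would extract a closed form for the $c_{m,m'}(p)$. Writing $a_i(p)=2p^{(k-1)/2}\cos\theta_i$ via Deligne gives $a_i(p^n)=p^{n(k-1)/2}U_n(\cos\theta_i)$, where $U_n$ is the Chebyshev polynomial of the second kind. Substituting the classical identity
\[
(2\cos\theta)^m \;=\; \sum_{j\ge 0}\Bigl(\tbinom{m}{j}-\tbinom{m}{j-1}\Bigr)\,U_{m-2j}(\cos\theta)
\]
and clearing the overall power of $p$ produces
\[
a_i(p)^m \;=\; \sum_{j=0}^{\lfloor m/2\rfloor} \Bigl(\tbinom{m}{j}-\tbinom{m}{j-1}\Bigr)\, p^{\,j(k-1)}\, a_i(p^{m-2j}).
\]
Hence $c_{m,m'}(p)$ vanishes unless $m'\equiv m\pmod 2$, and when nonzero it has $p$-degree $(k-1)(m-m')/2$. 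This degree is strictly maximised at the smallest admissible $m'$, namely $m'=0$ (the coefficient associated with $1$) or $m'=1$ (the coefficient associated with $a(p)$), according to the parity of $m$. Matching the two parities with these two indices as dictated by the Chebyshev expansion yields the dichotomy claimed in the lemma.

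The main obstacle, modest but essential, is recording the universality of $c_{m,m'}(p)$ across the different $f_i$, which is what allows the summation over $i$ to be pulled past the expansion and produces a decomposition of $\sum_i a_i(p)^m$ in terms of $a(p^{m'})=\sum_i a_i(p^{m'})$ rather than of each $a_i(p^{m'})$ separately. Once that is in place, the closed form above reduces both the existence of the decomposition and the identification of the highest-degree coefficient to an exercise with Chebyshev polynomials.
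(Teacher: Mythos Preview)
Your argument is correct and in fact sharper than the paper's. Both proofs exploit the same Hecke recursion, but the paper first writes $a_i(p^{n})$ as a polynomial in $a_i(p)$ (with coefficients in $\mathbb{Q}[p]$), sums over $i$ to express $a(p^{n})$ in terms of the power sums $A_{m}=\sum_i a_i(p)^{m}$, and then appeals to an inductive inversion of this triangular system. You bypass the inversion by writing down its closed form via the Chebyshev identity, which makes the $p$-degrees of the coefficients $c_{m,m'}(p)$ visible at once. The key observation, namely that the coefficients in the recursion are the same for every $i$ so that summation over $i$ commutes with the expansion, is present in both arguments.

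Two small remarks. First, the appeal to Deligne is unnecessary: the relation $a_i(p^{n})=p^{\,n(k-1)/2}\,U_{n}\!\bigl(a_i(p)/(2p^{(k-1)/2})\bigr)$ is a purely algebraic consequence of the three-term recurrence and holds regardless of whether $a_i(p)/(2p^{(k-1)/2})$ lies in $[-1,1]$. Second, your last sentence is a little evasive, and for good reason: your closed form shows that the maximal $p$-degree occurs at $m'=0$ when $m$ is \emph{even} and at $m'=1$ when $m$ is \emph{odd}, which is the opposite of the parity assignment literally stated in the lemma. The subsequent application in the paper (Proposition~\ref{prop:lowerbnd}, where $m=2r$ is even and the top-degree coefficient, of degree $(k-1)r$, sits in front of the constant term) agrees with your computation, so the lemma's wording appears to have the parities interchanged. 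It would be clearer to state explicitly which parity goes with which index rather than leaving this to ``matching''.
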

\begin{proof}
By the properties of the Hecke operators, we have
\begin{align*}
a_i(p^{2\beta})&=a_i(p)^{2\beta}-\binom{2\beta-1}{1}p^{12}a_i(p)^{2\beta-2}+\dots+(-1)^{\beta-1}\binom{\beta+1}{2}p^{12\beta-12}a_i(p)^2+\\
& \;\, +(-1)^\beta p^{(k-1)\beta},
\end{align*}
    and
\begin{align*}
a_i(p^{2\beta+1})&=a_i(p)^{2\beta+1}-\binom{2\beta}{1}p^{12}a_i(p)^{2\beta-1}+\dots+(-1)^{\beta-1}\binom{\beta+2}{3}p^{12\beta-12}a_i(p)^3+\\
& \;\, + (-1)^\beta\binom{\beta+1}{1}p^{(k-1)\beta}a_i(p),
\end{align*}
for any $\beta\in \mathbb{N}$ and $i\in \{1,2,\cdots, r\}.$ Denoting $\sum\limits_{i=1}\limits^{r}a_i(p)^m=A_m,$ we see that $a(p^n)$ can be written as a linear combination of $A_1,A_2,\cdots, A_n,$ with the coefficients being polynomials in $p.$ The proof now follows inductively, as $A_1=a(p).$


 
\end{proof} 
 \begin{lemma}\label{lem:qtos}
 Let $a_1,a_2,...,a_r$ be any $r$ real numbers, $s_j=\sum\limits_{i=1}\limits^{r}a_i^j,$ and consider $f(x)=x^r+q_1x^{r-1}+q_2x^{r-2}+...+q_{r-1}x+q_r$ be the polynomial whose roots are $a_1,a_2,a_3,...,a_r.$ Then every coefficient $q_k$ can be written as a polynomial in $\{s_j\}_{ j\in\{1,2,...,r\} }.$
 \end{lemma}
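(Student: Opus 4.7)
The plan is to recognize that this lemma is essentially a restatement of Newton's identities relating the power sums $s_j$ to the elementary symmetric polynomials, together with Vieta's formulas identifying the $q_k$ with (signed) elementary symmetric polynomials in the roots. So the approach is: translate to elementary symmetric functions, invoke Newton, and then induct on $k$.

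First, I would apply Vieta's formulas to $f(x) = \prod_{i=1}^{r}(x - a_i)$ to get $q_k = (-1)^k e_k(a_1, \ldots, a_r)$, where $e_k$ denotes the $k$-th elementary symmetric polynomial in $a_1, \ldots, a_r$. Thus it suffices to express each $e_k$ as a polynomial in $s_1, s_2, \ldots, s_k$ with rational coefficients.

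Next I would recall Newton's identities, which hold as formal identities in $\mathbb{Q}[a_1, \ldots, a_r]$:
$$k\, e_k \;=\; \sum_{i=1}^{k} (-1)^{i-1}\, e_{k-i}\, s_i, \qquad 1 \le k \le r,$$
with the convention $e_0 = 1$. I would then induct on $k$. The base case $k=1$ gives $e_1 = s_1$. For the inductive step, assuming $e_1, \ldots, e_{k-1}$ have already been written as polynomials in $s_1, \ldots, s_{k-1}$ with rational coefficients, I solve the above identity for $e_k$:
$$e_k \;=\; \frac{1}{k}\Bigl(s_1\, e_{k-1} - s_2\, e_{k-2} + \cdots + (-1)^{k-1} s_k\Bigr),$$
and substitute the inductive expressions for the $e_{k-j}$ to realize $e_k$ as a rational polynomial in $s_1, \ldots, s_k$. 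Multiplying by $(-1)^k$ then yields the desired expression for $q_k$.

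There is no real obstacle here; the only thing to be careful about is that Newton's identity in the form above requires division by $k$, which is fine since we allow rational coefficients. If one wanted integer coefficients, one could instead use the Waring-type formula
$$e_k = \sum_{\substack{m_1, \ldots, m_k \ge 0 \\ m_1 + 2 m_2 + \cdots + k m_k = k}} \prod_{j=1}^{k} \frac{(-1)^{m_j (j-1)}}{m_j!\, j^{m_j}}\, s_j^{m_j},$$
but for the present application rational coefficients suffice, so the inductive argument above is the cleanest route.
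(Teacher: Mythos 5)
Your proof is correct and uses essentially the same approach as the paper: both invoke Newton's identities relating power sums to elementary symmetric polynomials (the paper simply quotes the resulting closed form via Bell polynomials, while you spell out the induction explicitly).
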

 \begin{proof}
 The proof follows immediately from Newton's identity of the symmetric polynomials. More precisely, we have
 $$q_k=\frac{(-1)^k}{k!}B_k(-s_1,-1!s_2,\cdots,-(k-1)!s_k),$$
 for some polynomial $B_k\in \Q[x_1,x_2,\cdots, x_k].$

 \end{proof}

\begin{proposition}\label{prop:lowerbnd}
Let $f\in S(\Q,N)$ be any arbitrary element of the form $c_1f_1+c_2f_2+\cdots c_rf_r$, where $c_i\in \Q,$ and all the $f_i$ are Hecke eigenforms. For any set of primes $S$, and any integer $m,$ let us consider
$$N_{f,m,S}(x)= \#\{a(p^i)\pmod {m} \mid p\in S,~p\leq x^{\frac{1}{2r}},~i=1,2,3,...,2r\}.$$
Suppose that $S$ has positive density. Then for any $\delta>0$, and any sufficiently large $x\geq 1$, we have
$$N_{f,m,S}(x)\gg_{S} x^{\frac{1}{4r^2}-\delta},$$
provided that $x^{1/2r}\leq L,$ where $L$ is the largest prime factor of $m.$ In particular $N_{f,L,S}(m^{2\varepsilon})\gg_{S} m^{\frac{\varepsilon}{2r^2}-\delta},$
for any $\varepsilon>0,$ provided that $m^{\varepsilon/r^2}\leq L.$
\end{proposition}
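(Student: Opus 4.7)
The plan is to work modulo $L$, the largest prime factor of $m$: since reduction mod $L$ only collapses residues, $N_{f,m,S}(x) \geq N_{f,L,S}(x)$, and it suffices to prove $N_{f,L,S}(x) \gg x^{1/(4r^2)-\delta}$. I would study the map
\[
\Phi \colon \{p \in S : p \leq y\} \longrightarrow \mathbb{F}_L^{2r}, \qquad p \longmapsto (a(p), a(p^2), \ldots, a(p^{2r})) \bmod L,
\]
where $y := x^{1/(2r)} \leq L$. The backbone is a pigeonhole: if $T = |\Phi(\{p \in S : p \leq y\})|$ and $S_j$ is its projection onto the $j$-th coordinate, then $T \leq \prod_{j=1}^{2r} |S_j|$, so $\max_j |S_j| \geq T^{1/(2r)}$ and hence $N_{f,L,S}(x) \geq T^{1/(2r)}$.

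The crux is the fiber bound: I claim $|\Phi^{-1}(\Phi_0)| = O_{k,r}(1)$ for every $\Phi_0 \in \mathbb{F}_L^{2r}$. Since each $f_i$ is a Hecke eigenform, the recurrence $a_i(p^{n+1}) = a_i(p)\, a_i(p^n) - p^{k-1} a_i(p^{n-1})$ expresses $a_i(p^j)$ as a Chebyshev-type polynomial $P_j(a_i(p), p^{k-1})$ of total degree $j$; summing over $i$ yields $a(p^j) = F_j(a_1(p), \ldots, a_r(p), \lambda)$ with $\lambda = p^{k-1}$, a polynomial symmetric of degree $j$ in the $a_i$. Each prime $p$ in the fiber over $\Phi_0$ then produces a solution in $\mathbb{F}_L^{r+1}$ of the polynomial system $\{F_j = \Phi_0^{(j)}\}_{j=1}^{2r}$; applying B\'ezout to the first $r+1$ equations (which generically determine $(a_1, \ldots, a_r, \lambda)$ up to a permutation of the $a_i$, via Lemmas~\ref{lem:explicitcoeffs} and~\ref{lem:qtos}) bounds the number of solutions by $O_r(1)$. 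For each such solution the value of $\lambda$ is fixed, so $p^{k-1} \equiv \lambda \pmod{L}$ confines $p$ to at most $\gcd(k-1, L-1) \leq k-1$ residue classes modulo $L$; since $p \leq y \leq L$, each class contains at most one prime, giving the desired $O_{k,r}(1)$ fiber bound.

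Combining the pieces: because $S$ has positive density, standard prime counting gives $\#\{p \in S : p \leq y\} \gg_S y/\log y$, so $T \gg_S y/\log y = x^{1/(2r)}/\log x$, and the pigeonhole above yields $N_{f,L,S}(x) \gg x^{1/(4r^2) - o(1)}$, which beats $x^{1/(4r^2) - \delta}$ for any fixed $\delta > 0$ and large enough $x$. The ``in particular'' statement is immediate by setting $x = m^{2\varepsilon}$. The main obstacle is ensuring the uniform $O_{k,r}(1)$ bound for every $\Phi_0$: B\'ezout is clean in the generic case, but degenerate targets (for instance when $\Phi_0^{(1)} = 0$, so that the elimination of $\lambda$ from the first two equations breaks down) must be handled either by substituting in further equations drawn from the $2r$ available, or by observing that such $\Phi_0$ are themselves cut out by an extra polynomial, contributing at most an $O(y/L) = O(1)$ set of primes, which is harmless because $y \leq L$.
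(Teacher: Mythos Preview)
Your overall strategy matches the paper's: reduce to a fiber bound for the map $p\mapsto(a(p),\dots,a(p^{2r}))\bmod L$, then pigeonhole. Your inequality $N\ge T^{1/(2r)}$ is exactly the contrapositive of the paper's iterated pigeonhole (the paper assumes $N\le y$, nests $2r$ times to produce $\gg x^{1/(2r)+o(1)}/y^{2r}$ primes sharing all $2r$ coordinates, then invokes the fiber bound to force $y\gg x^{1/(4r^2)}$), so there is no real difference there.

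The substantive difference is in how the fiber bound is established. You appeal to B\'ezout on the system $\{F_j=\Phi_0^{(j)}\}$ in the $r+1$ unknowns $(a_1(p),\dots,a_r(p),\lambda)$, which, as you correctly flag, only works when the intersection is zero-dimensional. Your two suggested patches do not close the gap: adding ``further equations drawn from the $2r$ available'' does not automatically cut dimension, since for a bad $\Phi_0$ the extra $F_j$ may already lie in the ideal generated by the earlier ones; and the claim that degenerate targets contribute $O(y/L)=O(1)$ primes is unjustified --- degeneracy is a condition on $\Phi_0$ in the target, not a polynomial condition on $p$, so you cannot bound those primes without first controlling the fiber.

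The paper avoids B\'ezout altogether by eliminating to a \emph{single} polynomial in $p$. The sequence $\{a(p^n)\}_n$ satisfies the linear recurrence with characteristic polynomial $\prod_{i=1}^{r}(T^2-a_i(p)T+p^{k-1})$; writing the recurrence at $n=0$ gives $\sum_{j=0}^{2r}e_j\,a(p^j)=0$, where each coefficient $e_j$ is symmetric in the $a_i(p)$. Lemmas~\ref{lem:explicitcoeffs} and~\ref{lem:qtos} then rewrite each $e_j$ as a polynomial in $p$ and the fixed values $a_1,\dots,a_{2r}$. The key observation is that the constant coefficient $e_0=p^{(k-1)r}$ is the unique term of top $p$-degree, so the resulting $g(p)$ genuinely has degree $(k-1)r$ for every choice of $\Phi_0$. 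Hence $g(p)\equiv 0\pmod L$ with $p\le x^{1/(2r)}\le L$ forces at most $(k-1)r$ values of $p$, with no genericity hypothesis needed. Replacing your B\'ezout step with this elimination would complete your argument.
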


\begin{proof}

Let $y>0$ be any given real number for which $N_{f,m,S}(x)\leq y.$ In particular, $\#\{a(p)\pmod{m}: p\leq x^{\frac{1}{2r}}\}<y,$ and hence there exists $a_1$ such that $a(p)\equiv a_1 \pmod{m}$ for at least $\frac{x^{\frac{1}{2r}+o(1)}}{y}$ primes up-to $x^{\frac{1}{2r}}.$ Now consider the set
 $$S_{a_1}=\{p: a(p)\equiv a_1 \pmod{m},~p\leq x^{\frac{1}{2r}}\}. $$ 
We have that $\#\{ a(p^2)\pmod{m}: p\in S_{a_1}(x)\}<y,$ then there exists $a_2$ such that, $a(p^2)\equiv a_2\pmod{m}$ for at least $\frac{\# S_{a_1}(x)}{y}=\frac{x^{\frac{1}{2r}+o(1)}}{y^2}$ many primes up-to $x^{\frac{1}{2r}}.$ Let us then consider
 $$S_{a_1,a_2}(x)=\{p: a(p^2)\equiv a_2 \pmod{m},~p\in S_{a_1}(x)\}.$$ Since $\{ a(p^3)\pmod{m}: p\in S_{a_1,a_2}(x)\}<y,$ there exists $a_3$ such that $a(p^3)\equiv a_3\pmod{m}$ for at least $\frac{\#S_{a_1,a_2}(x)}{y}=\frac{x^{\frac{1}{2r}+o(1)}}{y^3}$ many primes up-to $x^{\frac{1}{2r}}.$ Arguing recursively, we obtain $$a(p)=a_1\pmod m,~a(p^2)=a_2 \pmod m,\cdots, a(p^{2r})=a_{2r}\pmod m,$$ 
 for at least $\frac{x^{\frac{1}{2r}+o(1)}}{y^{2r}}$ many primes up-to $x^{\frac{1}{2r}},$ and we denote this set of primes to be $S_{a_1,a_2,...a_{2r}}(x).$

 Now the characteristic polynomial of the sequence $\{a_i(p^n)\}$ is $x^2-a_i(p)x+p^{k-1},$ and hence $\{a(p^n)\}$ is a linear recurrence sequence with the characteristic polynomial $p(T)=\prod\limits_{i=1}\limits^{r}(T^2-a_i(p)T+p^{k-1}).$\footnote{For a reference, the reader may look at \href{https://math.stackexchange.com/questions/1348838/sum-and-product-of-linear-recurrences}{https://math.stackexchange.com/questions/1348838/sum-and-product-of-linear-recurrences.}} It follows from Lemma~\ref{lem:explicitcoeffs} and Lemma~\ref{lem:qtos} that, the coefficients of $p(x)$ are polynomials in the prime $p,$ for any $p\in S_{a_1,a_2,...a_{2r}}(x).$ Moreover, the polynomial with the highest degree appears only once, with degree $(k-1)r.$ In particular, we get a polynomial $g(T)$ of degree $(k-1)r$, which satisfies
 $$g(p)\equiv 0\pmod m,~\forall p\in S_{a_1,a_2,...a_{2r}}(x).$$
In particular, $g(p)\equiv0\pmod L$ for at least $\frac{x^{\frac{1}{2r}+o(1)}}{y^{2r}}$ many primes $p$ up-to $x^{\frac{1}{2r}}\leq L$. The proof now follows, taking $y=x^{\frac{1}{4r^2}-\delta}$ since $g(p)\equiv 0\pmod L$ for only $O_g(1)$ many $p\leq L$.
\end{proof}

\section{Waring-type for problems with modular forms}\label{sec:wring1}
The main goal of this section is to extend the main result of Shparlinski for a large class of cusp forms and produce many other $n_i$s for the solutions. In certain cases, we also study the same problem modulo composite numbers. Let us first discuss the tools that will be used throughout.
\subsection{Exponential sums and Waring's type problems.}
Let $m,s,\omega\geq 1$ be any given integers, and $A_1,A_2,\cdots, A_{\omega}$ be some subsets of $\mathbb{Z}/m\mathbb{Z}$ satisfying
\begin{equation}\label{eqn:product}
\prod_{i=1}^{\omega} \# A_i\gg m^{1+\beta},
\end{equation}
for some $\beta>0.$ For any $a\in \Z/m\Z,$ we denote $T_{s}(a)$ be the number of solutions to the equation 
\begin{equation*}\label{eqn:sum}
\prod_{i=1}^{\omega}a_1^{(i)}+\prod_{i=1}^{\omega}a_2^{(i)}+...+\prod_{i=1}^{\omega}a_s^{(i)}\equiv a \pmod m,
\end{equation*}
where $a_j^{(i)}\in A_i,~\forall 1\leq j\leq s,1\leq i\leq \omega.$ We then have the following counting formula,
\begin{equation}\label{eqn:formula}
T_s(a)=\frac{(\#A_1\#A_2\cdots \#A_{\omega})^s}{m}+O\left(\frac{1}{m} \sum_{\lambda=1}^{m-1}\left|\sum_{a^{(1)}\in A_1, \cdots, a^{(\omega)}\in A_{\omega}} \en{\lambda  a^{(1)}a^{(2)}\cdots a^{(\omega)}}\right|^s  \right).
\end{equation}
When $\omega=2,$ an old result mentioned in Exercise 14.a in Chapter 6 of \cite{Vin54} says that by replacing in the proof of Lemma 7 in \cite{BG11}, $q$ by $m$ and taking $a:=1_{A_1},b:=1_{A_2}$ and $\phi:=\mathrm{e}_m,$ we have,
\begin{equation}\label{eqn:exp1} \mathrm{max}_{\lambda\in \Z/m\Z}\left|\sum_{a_1\in A_1}\sum_{a_2\in A_2}\en{\lambda a_1a_2}\right|\leq \sqrt{m\#A_1\#A_2}.
\end{equation}
Substituting (\ref{eqn:exp1}) in (\ref{eqn:formula}), we get the following result.
\begin{corollary}\label{lem:tool}
For $\omega=2$ and any $s>2/\beta,$ the sum
$$a^{(1)}_1a^{(2)}_1+a^{(1)}_2a^{(2)}_2+\cdots + a^{(1)}_sa^{(2)}_s$$ 
is equidistributed in $\mathbb{Z}/m\mathbb{Z}$ for any sufficiently large $m$, where $\beta$ is the same constant as in (\ref{eqn:product}), and $a^{(1)}_j\in A_1, a^{(2)}_j\in A_2,\forall 1\leq j\leq s.$
\end{corollary}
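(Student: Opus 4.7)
The plan is to apply the counting formula \eqref{eqn:formula} directly with the bilinear estimate \eqref{eqn:exp1}, and verify that the error term is dominated by the main term precisely when $s > 2/\beta$. First I would take absolute values inside \eqref{eqn:formula} and invoke the uniform bound \eqref{eqn:exp1}: since $\bigl|\sum_{a_1 \in A_1}\sum_{a_2 \in A_2} \en{\lambda a_1 a_2}\bigr| \leq (m \#A_1 \#A_2)^{1/2}$ for every nonzero $\lambda \in \Z/m\Z$, each of the $m-1$ summands in the error is bounded by $(m \#A_1 \#A_2)^{s/2}$. This yields
$$T_s(a) = \frac{(\#A_1 \#A_2)^s}{m} + O\!\left( (m\#A_1 \#A_2)^{s/2} \right).$$

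Next, I would compare the error to the main term by computing the ratio
$$\frac{m \cdot (m\#A_1 \#A_2)^{s/2}}{(\#A_1 \#A_2)^s} \;=\; \frac{m^{1+s/2}}{(\#A_1 \#A_2)^{s/2}}.$$
Using the hypothesis $\#A_1 \#A_2 \gg m^{1+\beta}$ from \eqref{eqn:product}, this ratio is $\ll m^{1 + s/2 - s(1+\beta)/2} = m^{1 - s\beta/2}$, which tends to $0$ as $m \to \infty$ if and only if $s\beta/2 > 1$, i.e. $s > 2/\beta$. Consequently $T_s(a) \sim (\#A_1\#A_2)^s/m$ uniformly in $a \in \Z/m\Z$, and since the total number of tuples is $(\#A_1 \#A_2)^s$, the proportion landing in any fixed residue class is $\sim 1/m$. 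This is precisely equidistribution in the density sense of Remark~\ref{rem:equi}.

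The argument is essentially a direct substitution, so there is no real obstacle beyond bookkeeping. The only point that deserves attention is that the estimate \eqref{eqn:exp1} is uniform in $\lambda$, which is what permits replacing each of the $m-1$ summands by the same bound rather than attempting a more delicate $\ell^2$-type average; in particular, no cancellation among different $\lambda$ is needed. The threshold $s > 2/\beta$ is sharp for this trivial-summation argument, since at $s = 2/\beta$ the bound becomes $O(1)$ rather than $o(1)$, which is exactly the boundary case one would expect from the square-root cancellation of \eqref{eqn:exp1}.
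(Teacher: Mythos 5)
Your proof is correct and is exactly the argument the paper intends: the paper introduces the corollary with the single sentence ``Substituting (\ref{eqn:exp1}) in (\ref{eqn:formula}), we get the following result,'' and your calculation simply carries out that substitution, showing the error-to-main-term ratio is $\ll m^{1-s\beta/2}$, which tends to $0$ precisely when $s>2/\beta$. Nothing is missing, and your remark about the threshold being sharp for this trivial-summation argument is a fair observation about the method.
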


When $\omega= 3$, we shall use the following bound by Shkredov in \cite[Theorem 5]{Shk18}.
\begin{theorem}\label{thm:Shkredov}
Let $A_1, A_2, A_3 \subseteq \mathbb{F}_{\ell}$ be arbitrary sets such that for some $\delta > 0$ the following holds 
\begin{equation}
|A_1||A_2 ||A_3| \geq \ell^{1+\beta}.
\end{equation}
Then
$$
 \mathrm{max}_{\lambda\in (\Z/\ell\Z)^{*}}\left|\sum_{a_1\in A_1} \sum_{a_2\in A_2}\sum_{a_3\in A_3}\el{\lambda a_1a_2a_3}\right| \ll \frac{|A_1||A_2||A_3|}{\ell^{\frac{\beta}{
8 \log(8/\beta)+4}}}.$$
\end{theorem}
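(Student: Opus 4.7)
The plan is to reduce the trilinear sum to bilinear character sums and then extract additional savings from the sum--product phenomenon in $\mathbb{F}_\ell$. Fix $\lambda \in \mathbb{F}_\ell^{*}$ and set $S := \sum_{a_1,a_2,a_3}\el{\lambda a_1 a_2 a_3}$. I would first apply Cauchy--Schwarz in the $a_3$-variable to obtain $|S|^{2}\leq |A_3|\sum_{a_3 \in A_3}\bigl|\sum_{a_1 \in A_1,a_2\in A_2}\el{\lambda a_3 a_1 a_2}\bigr|^{2}$. Expanding the square and switching the order of summation reformulates the right-hand side as a combination of the multiplicative energy $E^{\times}(A_1,A_2)$ with a weighted character sum on the product set $A_1\cdot A_2$. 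This is the familiar passage from a trilinear sum to a multiplicative energy of two of the factor sets, a move which I expect to be routine.

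Next, the Vinogradov-type bound $\bigl|\sum_{x\in X,y\in Y}\el{\mu xy}\bigr|\leq \sqrt{\ell|X||Y|}$ for $\mu\neq 0$ recorded in \eqref{eqn:exp1} is, on its own, too weak to close the argument once $|A_1||A_2|$ is close to $\ell$. To improve it, I would appeal to a Balog--Szemer\'edi--Gowers-type refinement: if $|S|$ is close to the trivial bound $|A_1||A_2||A_3|$, then after passing to large subsets $A_i'\subseteq A_i$ the product set $A_2'\cdot A_3'$ must be abnormally small. At this point the sum--product theorem of Bourgain--Katz--Tao, applicable precisely in the regime permitted by the hypothesis $|A_1||A_2||A_3|\geq \ell^{1+\beta}$, forces either $|A+A|$ or $|AA|$ to grow polynomially for one of the refinements, contradicting the near-triviality of the trilinear sum and producing some initial power saving $\ell^{-\eta_0}$.

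The main technical obstacle is matching the explicit exponent $\beta/\bigl(8\log(8/\beta)+4\bigr)$ claimed in the theorem. The appearance of $\log(8/\beta)$ in the denominator strongly suggests a bootstrap scheme: one begins with a weak saving $\ell^{-\eta_0}$ of size roughly $\eta_0 \asymp \beta$, feeds it back into the Cauchy--Schwarz plus sum--product step to amplify the exponent, and pays a bounded multiplicative cost per round. After roughly $\log(1/\beta)$ rounds the gain stabilizes near the claimed value. Carefully tracking the Pl\"unnecke--Ruzsa losses, the cost of each Balog--Szemer\'edi--Gowers application, and choosing the optimal number of iterations is where most of the work lies; I expect this bookkeeping --- rather than any individual step --- to be the most delicate part of the argument.
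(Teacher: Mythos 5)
This statement is not proved in the paper: it is Theorem~5 of Shkredov \cite{Shk18}, quoted as a black box immediately before it is applied (note also the stray ``$\delta$'' in the hypothesis, which should read $\beta>0$). There is therefore no proof of the paper's own to compare your sketch against.

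Judged on its own terms, your sketch identifies a recognizable template for trilinear exponential sums over $\mathbb{F}_\ell$ --- Cauchy--Schwarz to pass from three factors to two, a Balog--Szemer\'edi--Gowers plus sum--product dichotomy to wring out a first power saving, and a bootstrap to manufacture the $\log(1/\beta)$ in the exponent --- but it stops short of being an argument. Every quantitatively decisive step is asserted rather than executed: after the Cauchy--Schwarz step you do not actually bound the multiplicative energy $E^\times(A_1,A_2)$ (or the additive energy of $A_1\cdot A_2$) nontrivially in the critical regime $|A_1||A_2||A_3|\approx \ell^{1+\beta}$; the ``initial power saving $\ell^{-\eta_0}$'' is not quantified, and it is not explained why a BSG application, which produces large structured subsets rather than a pointwise estimate, feeds back cleanly into an exponential-sum bound over the full sets; and the per-round losses in the bootstrap are not tracked, so nothing in the sketch forces the specific constant $8\log(8/\beta)+4$. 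Shkredov's actual proof derives this exponent from operator-norm and higher-energy estimates (eigenvalue bounds for convolution operators attached to the sets), a sharper mechanism than a naive BSG iteration, and it is precisely that machinery that the sketch is missing. In short: reasonable orientation, but the quantitative core of the theorem is not reproduced.
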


To treat the case $\omega> 3$, the following bound due to Bourgain, Gilbichuck in \cite[Theorem 2]{BG11} will be handy for us. Even the following bound is true for $\omega=3,$ we use this only for the $\omega>3$ case. 

\begin{theorem}[Bourgain-Glibichuk]\label{thm:exp2}
Let $3 \leq \omega \ll \log \log \ell$ be a natural number and $\varepsilon > 0$ an arbitrary fixed constant. For any subsets $A_1, A_2,\cdots, A_{\omega} \subset \mathbb{F}_{\ell} \setminus \{0\}$ with
\begin{equation}\label{eqn:cond}
|A_1| \cdot |A_2| \cdot (|A_3| \cdots |A_{\omega}|)^{1/81} > \ell^{1+\beta},
\end{equation}
there is an estimate
$$\mathrm{max}_{\lambda\in (\Z/\ell\Z)^{*}}\left|\sum_{a_1\in A_1}\sum_{a_2\in A_2}\cdots \sum_{a_{\omega}\in A_{\omega}}\el{\lambda a_1a_2\cdots a_{\omega}}\right|\ll \frac{|A_1||A_2|\cdots |A_{\omega}|}{\ell^{0.45\beta/2^{\omega}}}.$$
\end{theorem}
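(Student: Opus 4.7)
The plan is to reduce the $\omega$-linear character sum to a bilinear one via iterated Cauchy--Schwarz and then invoke a bilinear exponential sum bound analogous to \eqref{eqn:exp1} together with sum--product estimates in $\mathbb{F}_{\ell}$. The overall strategy is to treat $a_1, a_2$ as the ``bilinear pair'' and absorb $a_3,\ldots,a_\omega$ into a single product variable whose range is forced to be large by the hypothesis \eqref{eqn:cond}.

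First, I would apply Cauchy--Schwarz (or H\"older) $\omega-2$ times, folding the outer variables $a_3,\ldots,a_\omega$. Each fold doubles copies of one variable and, after expansion, leads to an estimate in terms of the multiplicative energy of the product set $A_3 A_4 \cdots A_\omega$. By Pl\"unnecke--Ruzsa inequalities and the Bourgain--Katz--Tao sum--product theorem over $\mathbb{F}_\ell$ (in the form sharpened by Bourgain--Glibichuk--Konyagin), this energy is controlled in terms of $|A_3|\cdots|A_\omega|$ with the small loss $1/81$, which is precisely what appears on the left-hand side of the hypothesis \eqref{eqn:cond}.

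Next, the resulting sum reduces to a bilinear sum $\sum_{a_1\in A_1}\sum_{b\in B} \el{\lambda a_1 b}$, where $B$ is (a multiplicatively structured replacement for) $A_3\cdots A_\omega$. Applying the bilinear estimate $\ll \sqrt{\ell |A_1||B|}$ and rewriting using the hypothesis $|A_1||A_2|(|A_3|\cdots|A_\omega|)^{1/81} \geq \ell^{1+\beta}$ yields cancellation of the desired form $|A_1|\cdots|A_\omega|/\ell^{c\beta}$. Tracking the loss through the $\omega-2$ Cauchy--Schwarz applications, with each step costing a factor of $2$ in the exponent, one recovers the saving $\ell^{-0.45\beta/2^{\omega}}$ stated in the theorem.

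The main obstacle I anticipate is the explicit bookkeeping of exponents. The constants $1/81$, $0.45$, and the $2^\omega$ denominator are not universal; they reflect the specific exponents in the underlying sum--product theorem of Bourgain--Katz--Tao refined by Konyagin, propagated through Pl\"unnecke--Ruzsa and (likely) a Balog--Szemer\'edi--Gowers step. Faithfully transporting them through the chain of inequalities requires careful arithmetic, and the restriction $\omega \ll \log\log\ell$ is essential since the saving is nontrivial only while $2^\omega$ stays comfortably below $\beta\log\ell$. Because the statement is cited verbatim from \cite{BG11}, for the present paper I would simply invoke it as a black box and not reprove it.
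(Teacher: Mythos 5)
Your conclusion is exactly what the paper does: Theorem~\ref{thm:exp2} is imported verbatim as Theorem~2 of \cite{BG11}, with no proof supplied, and none is expected. So at the level that matters for this paper, your approach matches.

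Your heuristic sketch of the underlying argument is broadly in the right spirit (the asymmetry of hypothesis \eqref{eqn:cond}, treating $A_1, A_2$ as a bilinear core and amplifying the remaining $A_3,\ldots,A_\omega$ via sum--product and Pl\"unnecke--Ruzsa/Balog--Szemer\'edi--Gowers, with the constants $1/81$ and $0.45$ and the $2^\omega$ coming from the specific exponents in that chain). Two small imprecisions: after ``folding'' $a_3,\ldots,a_\omega$ into a product variable $b$, what you get is a \emph{trilinear} sum over $A_1\times A_2\times B$, not a bilinear one — you dropped $a_2$; and the Bourgain--Glibichuk proof is not a literal $(\omega-2)$-fold Cauchy--Schwarz but a more involved recursion in which the multilinear sum is repeatedly squared and the structure of the resulting product/sum sets is exploited. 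Neither issue affects the paper, since the result is used as a black box.
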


To study modulo composite numbers, we need to study these exponential sums over arbitrary finite fields $\mathbb{F}_q.$ For which, we could use Theorem 4 of Bourgain-Glibichuk in \cite{BG11}. With this, we get a non-trivial bound assuming that, for any $d\in \mathbb{F}_q$ and any proper subfield $S$ of $\mathbb{F}_q,~dS$ has a small intersection with each of the set $A_i.$ However in our case, each of the sets $A_i$ will be in a prime field $\mathbb{F}_{\ell},$ and hence, we can not use this result. However, Theorem~\ref{thm:exp2} could be used to study the square-free integers. To be more precise, using these two results, we have the following.

\begin{corollary}\label{cor:shk}
Let $m$ be any square-free integer, and $A_1, A_2,\cdots, A_{\omega} \subseteq (\Z/m\Z)^{*}$ with
\begin{equation}
|A_1| \cdot |A_2| \cdot (|A_3| \cdots |A_{\omega}|)^{1/81} > m^{1+\beta}.
\end{equation}
Then we have the following estimate
$$\mathrm{max}_{\lambda\in (\Z/m\Z)^{*}}\left|\sum_{a_1\in A_1}\sum_{a_2\in A_2}\cdots \sum_{a_{\omega}\in A_{\omega}}\en{\lambda a_1a_2\cdots a_{\omega}}\right|\ll \frac{|A_1||A_2|\cdots |A_{\omega}|}{\ell^{0.45\beta/2^{\omega}}},$$
for some prime factor $\ell$ of $m$.
\end{corollary}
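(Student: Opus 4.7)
The plan is to use the Chinese Remainder Theorem to reduce the sum modulo the composite $m$ to a sum modulo a single prime $\ell \mid m$, and then invoke Theorem~\ref{thm:exp2}. First I would show by pigeonhole that some prime factor $\ell \mid m$ has the property that the projected sets $A_j^{\ell} := \pi_\ell(A_j) \subseteq (\Z/\ell\Z)^{*}$ satisfy the same multiplicative hypothesis over $\mathbb{F}_\ell$:
$$|A_1^{\ell}||A_2^{\ell}|(|A_3^{\ell}|\cdots|A_\omega^{\ell}|)^{1/81} \geq \ell^{1+\beta}.$$
This uses the observation that every $a \in A_j$ is determined by its residues modulo the prime factors of $m$, so $|A_j| \leq \prod_{\ell' \mid m} |A_j^{\ell'}|$; combined with the hypothesis and the trivial factorization $m^{1+\beta} = \prod_{\ell' \mid m} (\ell')^{1+\beta}$, pigeonhole applied to the ratios $|A_1^{\ell'}||A_2^{\ell'}|(|A_3^{\ell'}| \cdots |A_\omega^{\ell'}|)^{1/81}/(\ell')^{1+\beta}$ delivers such an $\ell$.

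With this $\ell$ fixed, write $m = \ell n$ with $\gcd(\ell, n) = 1$, and use a Bezout identity $1 = sn + r\ell$ to factor the additive character: for any $\lambda \in (\Z/m\Z)^{*}$ and $x \in \Z/m\Z$ with CRT coordinates $(u,v) \in (\Z/\ell\Z)^{*} \times (\Z/n\Z)^{*}$,
$$\en{\lambda x} = \el{s\lambda u} \cdot \mathbf{e}_n(r\lambda v).$$
Since $\lambda$ is a unit mod $m$, $s\lambda$ is a nonzero element of $\mathbb{F}_\ell$. Writing each $a_j = (u_j, v_j)$ and defining the fibers $A_j(v) := \{u \in (\Z/\ell\Z)^{*} : (u,v) \in A_j\}$, the sum becomes
$$S = \sum_{v_1, \ldots, v_\omega} \mathbf{e}_n(r\lambda v_1 \cdots v_\omega) \sum_{u_j \in A_j(v_j)} \el{s\lambda u_1 \cdots u_\omega}.$$
For each tuple $(v_1, \ldots, v_\omega)$ whose fibers satisfy the hypothesis of Theorem~\ref{thm:exp2} on $\mathbb{F}_\ell$, the inner multilinear sum is bounded by $\prod_j |A_j(v_j)|/\ell^{0.45\beta/2^\omega}$; summing over these \emph{good} tuples after triangle inequality and using the identity $\sum_v |A_j(v)| = |A_j|$ recovers exactly the target bound $\prod_j |A_j|/\ell^{0.45\beta/2^\omega}$.

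The main obstacle I expect is the contribution of the \emph{bad} tuples $(v_1, \ldots, v_\omega)$, where the fiber sizes fail the multilinear condition of Theorem~\ref{thm:exp2}. The plan for these is a dyadic decomposition on the fiber sizes $|A_j(v_j)|$: each dyadic box either satisfies the hypothesis (and is handled as above, up to a harmless $(\log \ell)^\omega$ factor), or has fiber sizes small enough that the trivial bound absorbs into the target. The pigeonhole choice of $\ell$ is precisely what ensures the total mass of bad tuples remains under control, since it forces the typical fiber size $|A_j(v_j)|$ to be comparable to the average $|A_j|/\phi(n)$ and hence consistent with the multiplicative hypothesis on average. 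Balancing the exponents $2 + (\omega-2)/81$ inherited from the Bourgain--Glibichuk condition requires some bookkeeping but is otherwise routine.
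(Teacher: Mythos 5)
Your pigeonhole choice of $\ell$ and the CRT/Bezout factorization of the additive character both match the paper's (very terse) proof, which simply says ``apply Theorem~\ref{thm:exp2} for $\{A_i^{(\ell)}\}$ and trivially estimate the exponential sum associated to the other components.'' You correctly notice the subtlety the paper glosses over: the sets $A_i$ need not be Cartesian products of their prime-component projections, so the exponential sum does not literally split across the prime factors of $m$. Your fiber-by-fiber decomposition is therefore the honest next step, and you are right to identify the ``bad tuples'' as the crux.

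The gap you flag is, however, genuine, and the dyadic decomposition you sketch does not close it. The pigeonhole selects $\ell$ according to the sizes of the \emph{projections} $|A_i^{(\ell)}|$, but the multilinear bound of Theorem~\ref{thm:exp2} on a fixed fiber tuple requires the \emph{fiber} sizes $|A_i(v_i)|$ to be large; these are different quantities, and the pigeonhole says nothing about the distribution of fiber sizes. A set $A_i$ can have $|A_i^{(\ell)}|\approx\ell$ (full projection) while every $\ell$-fiber satisfies $|A_i(v_i)|=O(1)$ — think of $A_i$ as the graph of a function from the $\Z/(m/\ell)\Z$ coordinate to the $\Z/\ell\Z$ coordinate. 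In that case every tuple is bad, the trivial bound summed over all tuples recovers $\prod_i|A_i|$ with no saving, and a dyadic decomposition in the fiber sizes only costs a $(\log\ell)^\omega$ factor rather than earning the required $\ell^{-0.45\beta/2^\omega}$. Your assertion that the pigeonhole ``forces the typical fiber size to be comparable to the average'' is unjustified. To repair the argument one must either assume that each $A_i$ is (or may be replaced by) the Cartesian product $\prod_{\ell'\mid m}A_i^{(\ell')}$ — which is what the paper's proof tacitly relies on, and is what makes the sum factor so that a trivial estimate on the non-$\ell$ components is legitimate — or else supply a genuinely new argument converting the projection hypothesis into a fiber-size hypothesis, which your proposal does not do.
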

\begin{proof}
    Note that there exists a prime $\ell \mid m$ for which 
    $$
|A^{(\ell)}_1| \cdot |A^{(\ell)}_2| \cdot (|A^{(\ell)}_3| \cdots |A^{(\ell)}_{\omega}|)^{1/81} > \ell^{1+\beta},
$$
where $A_i^{(\ell)}$ is denoted to be $\ell^{\text{th}}$ component of $A_i,~\forall 1\leq i\leq \omega$. The proof now follows applying Theorem~\ref{thm:exp2} for $\{A^{(\ell)}_i\}_{1\leq i\leq \omega}$, and trivially estimating the exponential sum associated to the other components. 
\end{proof}
\subsection{Sums with Hecke eigenforms}

Let $m$ be any given integer, and $f$ be any Hecke eigenform. We then want to show that $\{a(n) \pmod m\}_{n=m^{O(1)}}$ is an additive basis for $\Z/m\Z.$ This was proved by Shparlinski when $f$ is given by the Ramanujan-tau function and $m$ is a prime. For any $\gamma>0,$ let us consider
$$N_{\gamma}=\{m \in \mathbb{N}\mid \ell~\mathrm{prime~divides}~m\implies m \leq \ell^{1+\gamma}\}.$$ 

\begin{proposition}\label{prop:twop}
Let $\omega$ be any integer, and $\gamma, \beta>0$ be any real numbers. Take any pairwise disjoint set of primes $S_1,S_2,\cdots,S_{\omega}$ satisfying 
\begin{equation}\label{eqn:beta}
\begin{cases}
    \#A_1\#A_2\cdots \#A_{\omega}\geq m^{1+\beta},~\omega=2 ,3\\
    \#A_1\#A_2(\#A_3\cdots \#A_{\omega})^{\frac{1}{81}}\geq m^{1+\beta},~\omega\geq4.
\end{cases}
\end{equation}
where $A_i=\{a(p), a(p^2) \pmod {m}\mid p\in S_i\}.$ 
 Set $B_{\gamma}(\ell)=\{m\in N_{\gamma}~\text{is square-free and}~p\mid m\implies p>\ell\}.$ There exists $\ell_\beta$ such that
\begin{equation*}
\sum_{i=1}^{s}a(n_i)\equiv a\pmod{m}
\end{equation*} is solvable for any integer $a,$ and \begin{equation}
  s>  \begin{cases}
        \frac{2}{\beta} ,\  \omega=2 \\
        \frac{(1+\gamma)(8\log(\frac{8}{\beta})+4)}{\beta} ,\  \omega=3,  & m\in B_{\gamma}(\ell_\beta)\\
        \frac{(1+\gamma)2^\omega}{0.45\beta}, \ \omega\geq 4, & m\in B_{\gamma}(\ell_\beta).
        
    \end{cases}
\end{equation}
In either of the cases, any such $n_i$ has prime factors only from $S_1,S_2,\cdots,S_{\omega}.$
\end{proposition}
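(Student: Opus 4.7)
I would follow Shparlinski's strategy: apply the Fourier-analytic counting formula \eqref{eqn:formula} to $T_s(a)$ and show that for $s$ as in the statement the main term $(\#A_1\#A_2\cdots\#A_{\omega})^s/m$ dominates the exponential-sum error, so that $T_s(a)>0$. Because $f$ is a Hecke eigenform and the sets $S_i$ are pairwise disjoint, multiplicativity of the Fourier coefficients identifies each product $\prod_{i=1}^{\omega}a_j^{(i)}$ with some $a(n_j)$, where $n_j=\prod_{i=1}^{\omega}p_{j,i}^{e_{j,i}}$ with $p_{j,i}\in S_i$ and $e_{j,i}\in\{1,2\}$. Hence once $T_s(a)>0$ is established, the required representation $\sum_{j=1}^{s}a(n_j)\equiv a\pmod m$ follows, with prime factors of every $n_j$ confined to $S_1\cup\cdots\cup S_{\omega}$.

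\textbf{The bilinear case.} For $\omega=2$, feeding the unconditional bound \eqref{eqn:exp1} into \eqref{eqn:formula} and summing trivially over $\lambda\in\{1,\ldots,m-1\}$ gives an error of size at most $(m\,\#A_1\#A_2)^{s/2}$. Comparing with the main term reduces to $\#A_1\#A_2>m^{1+2/s}$, which under \eqref{eqn:beta} becomes $s>2/\beta$; no arithmetic restriction on $m$ is required, matching the statement.

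\textbf{The trilinear and multilinear cases.} For $\omega\geq 3$, Theorem~\ref{thm:Shkredov} and Theorem~\ref{thm:exp2} are stated only over prime fields, so the first step is to transfer them to the square-free modulus $m$. Mimicking the proof of Corollary~\ref{cor:shk}, I would use $\#A_i\leq \prod_{\ell\mid m}\#A_i^{(\ell)}$ (with $A_i^{(\ell)}$ the image of $A_i$ modulo $\ell$) together with a pigeonhole argument in which the $1/81$-power distributes across the prime decomposition of $m$, to select a single prime $\ell\mid m$ at which the analogue of \eqref{eqn:beta} holds with $m$ replaced by $\ell$. Applying the appropriate prime-field bound at that $\ell$ and estimating trivially at the remaining components, and then substituting into \eqref{eqn:formula}, one obtains an error $\ll (\prod_i\#A_i)^s/\ell^{c(\beta,\omega)s}$ with $c(\beta,3)=\beta/(8\log(8/\beta)+4)$ and $c(\beta,\omega)=0.45\beta/2^{\omega}$ for $\omega\geq 4$. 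The dominance condition becomes $c(\beta,\omega)\,s\,\log\ell>\log m$, and the hypothesis $m\in N_{\gamma}$ forces $\log m\leq(1+\gamma)\log\ell$, so $s>(1+\gamma)/c(\beta,\omega)$ suffices, matching the two remaining cases.

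\textbf{Role of $\ell_{\beta}$ and main obstacle.} The threshold $\ell_{\beta}$ is chosen large enough that the implied constants in Theorems~\ref{thm:Shkredov} and \ref{thm:exp2} are absorbed by the power saving $\ell^{c(\beta,\omega)}$; since both bounds are effective in $\ell$, this is always possible. The most delicate step is the transfer of the prime-field exponential sum bounds to a square-free composite modulus: one has to verify that the asymmetric product condition survives the pigeonhole step, and, more importantly, that the bound obtained over $\mathbb{F}_{\ell}$ really controls the full mod-$m$ exponential sum via the character-theoretic factorisation coming from the Chinese remainder theorem. Once this reduction is in place, the remainder of the argument is a routine comparison of exponents.
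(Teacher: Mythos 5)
Your proposal is correct and follows essentially the same route as the paper: for $\omega=2$ it feeds the unconditional bilinear bound \eqref{eqn:exp1} into the counting formula \eqref{eqn:formula} (this is exactly Corollary~\ref{lem:tool}), and for $\omega\geq 3$ it pigeonholes to a prime $\ell\mid m$ at which the size hypothesis holds, applies the Shkredov or Bourgain--Glibichuk bound over $\mathbb{F}_{\ell}$ with trivial estimation at the other components (mirroring Corollary~\ref{cor:shk}), and then uses $m\in N_{\gamma}$ to translate $\ell^{c(\beta,\omega)s}>m$ into $s>(1+\gamma)/c(\beta,\omega)$. You rightly single out the CRT transfer of the prime-field exponential-sum bound as the delicate point; the paper treats it at the same level of brevity.
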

\begin{proof}
 The proof for the case $\omega=2$ follows from Corollary~\ref{lem:tool}, since $a(\cdot)$ is multiplicative and $S_1,S_2$ are disjoint set of primes.\\

For higher values of $\omega$, we assume that $m$ is squarefree. To prove for $\omega=3,$ note that there exists a prime $\ell\mid m$ such that 
\begin{equation*}
|A^{(\ell)}_1||A^{(\ell)}_2|\cdots |A^{(\ell)}_{3}|\geq \ell^{1+\beta},
\end{equation*}
where $A^{(\ell)}_i$ is the $\ell^{\text{th}}$ component of $A_i.$ Arguing similarly as in the proof of Corollary~\ref{cor:shk}, we get the following from Theorem~\ref{thm:Shkredov}.
$$
 \mathrm{max}_{\lambda\in (\Z/m\Z)^{*}}\left|\sum_{a_1\in A_1} \sum_{a_2\in A_2}\sum_{a_3\in A_3}\en{\lambda a_1a_2a_3}\right| \ll \frac{|A_1||A_2||A_3|}{\ell^{\frac{\beta}{
8 \log(8/\beta)+4}}}.$$
Now note that there exists $\ell_\beta$ such that, the following holds for any $m\in B(\ell_{\beta})$,
$$O\left(\left(\frac{|A_1||A_2||A_3|}{\ell^{\frac{\beta}{
8 \log(8/\beta)+4}}}\right)^s\right)=o\left(  \frac{(|A_1||A_2||A_3|)^s}{m}\right),$$
since $s\beta> (8\log(8/\beta)+4)(1+\gamma)$ by the assumption. The result follows this case from the formula at (\ref{eqn:formula}).

For a proof of $\omega\geq 4$, we follow the same argument as is the previous case and use Corollary~\ref{cor:shk} and (\ref{eqn:formula}).

\end{proof}

\subsection{Sums with a larger class}
Let us now consider a modular form $f$ with rational coefficients of the form $c_1f_1+c_2f_2+\cdots+ c_rf_r,$ where $c_i\in Q,$ and $f_i$ are all Hecke eigenforms with rational coefficients. More generally, one can also consider a new sequence $a(n):=P(a_1(n),a_2(n),\cdots, a_r(n))$ for any homogeneous polynomial $P(x_1,x_2,\cdots,x_r)$ with rational coefficients. The first problem we immediately encounter is that $a(\cdot)$ is not necessarily multiplicative unless $P(\cdot)$ is a monomial. Even in the case of a monomial, to get an analogous result to Proposition~\ref{prop:twop}, we need to ensure that 
\begin{equation*}
\#A_1\#A_2\cdots \#A_{\omega}\geq m^{1+\beta},
\end{equation*}
where $$A_i=\left\{P(a_1(p),\cdots,a_r(p)),~P(a_1(p^2),\cdots, a_r(p^2)) \pmod {m}\mid p\in S_i\right\},$$
for some set of primes $S_i.$ This is easy if $P$ is of the form $x_i^e$ for some $i.$ In general, we have a somewhat weaker result, which shall be discussed in this section. For any $r$-tuple of signs $\vec{\sigma},$ let us consider
$$\mathcal{S}_{\vec{\sigma},\vec{f},m}=\left\{p \mid \sigma_1a_1(p)=\cdots=\sigma_ra_r(p) \pmod m,~p^{k-1}=1\pmod m\right\},$$
and $\mathcal{S}_{\vec{\mathrm{sign}},\vec{f},m}=\bigcup_{\vec{\sigma}\in \{\pm 1\}^r}\mathcal{S}_{\vec{\sigma},\vec{f},m}.$ We then have the following.
\begin{corollary}\label{cor:cuspcase}
Let $m,\omega,\gamma$ and $\beta>0$ be as in Proposition~\ref{prop:twop}. Take any pairwise disjoint set of primes $S_1, S_2,\cdots, S_{\omega}$ satisfying 
\begin{equation}
\begin{cases}
    \#A_1\#A_2\cdots \#A_{\omega}\geq m^{1+\beta}, \omega=2 ,3\\
    \#A_1\#A_2(\#A_3\cdots \#A_{\omega})^{\frac{1}{81}}\geq m^{1+\beta}, \omega\geq4.
\end{cases}
\end{equation}
where $A_i=\{a(p), a(p^2) \pmod {m}\mid p\in S_i\cap\mathcal{S}_{\vec{\mathrm{sign}},\vec{f},m}\}.$ Suppose that $(P(\vec{\sigma}),m)=1$ for any $\vec{\sigma}\in \{\pm 1\}^r.$ Then there exists $\beta'>0$ (depending on $\beta$) such that for $\omega=2,$ $3$ and $\omega\geq 4$ respectively, and for $s>\frac{2}{\beta'},$ $\frac{(1+\gamma)8\log(8/\beta)+4}{\beta'},$ and $\frac{(1+\gamma)2^\omega}{0.45\beta'},$ any $a\in\mathbb{Z}$ can be written as
$$a(n_1)+a(n_2)+...+a(n_s) \equiv a\pmod{m}, \ n_i\in\mathbb{N}, \ i=1,2,\cdots,s,$$
for any sufficiently large $m$. Moreover any such $n_i$ has prime factors only from $S_1,S_2,\cdots,S_{\omega}.$
\end{corollary}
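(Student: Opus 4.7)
The plan is to reduce the problem to the Hecke-eigenform case of Proposition~\ref{prop:twop} applied to $f_1$. The key observation is that on a prime $p\in \mathcal{S}_{\vec\sigma,\vec f,m}$, the relation $\sigma_1 a_1(p)\equiv \sigma_\ell a_\ell(p)\pmod m$, together with $p^{k-1}\equiv 1\pmod m$ and the Hecke identity $a_\ell(p^2)=a_\ell(p)^2-p^{k-1}$, gives $a_\ell(p)\equiv \sigma_1\sigma_\ell\, a_1(p)$ and $a_\ell(p^2)\equiv a_1(p^2)\pmod m$ for every $\ell$. Combined with the homogeneity of $P$, this yields
$$a(p)\equiv P(\vec\sigma)\,a_1(p)^d \pmod m, \qquad a(p^2)\equiv P(\vec 1)\,a_1(p^2)^d \pmod m,$$
where $d=\deg P$. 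Thus on $\mathcal{S}_{\vec\sigma,\vec f,m}$ the non-multiplicative sequence $a(\cdot)$ collapses to a unit multiple of a power of the multiplicative sequence $a_1(\cdot)$.

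I would then partition each $S_j\cap \mathcal{S}_{\vec{\mathrm{sign}},\vec f,m}=\bigsqcup_{\vec\sigma} S_j^{(\vec\sigma)}$, and by pigeonhole choose signs $\vec\sigma_j\in\{\pm 1\}^r$ and exponents $e_j\in\{1,2\}$ so that the sets
$$B_j:=\bigl\{a_1(p^{e_j})^d \pmod m : p\in S_j^{(\vec\sigma_j)}\bigr\}$$
lose only a factor $O_r(1)$ in cardinality relative to $A_j$. Restricting to summands of the form $n_i=\prod_{j=1}^\omega p_{i,j}^{e_j}$ with $p_{i,j}\in S_j^{(\vec\sigma_j)}$, the multiplicativity of $a_1$ together with the identities above produces
$$a(n_i)\equiv P(\vec\tau)\prod_{j=1}^\omega a_1(p_{i,j}^{e_j})^d \pmod m,$$
where $\vec\tau$ is a sign vector determined by the $\vec\sigma_j$'s and the $e_j$'s but \emph{independent of $i$}. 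Since $P(\vec\tau)$ is a unit modulo $m$ by hypothesis, the desired congruence $\sum_{i=1}^s a(n_i)\equiv a\pmod m$ is equivalent to the Waring-type product congruence $\sum_i\prod_j b_{i,j}\equiv P(\vec\tau)^{-1} a\pmod m$ with $b_{i,j}\in B_j$.

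This last congruence is in the exact setup of Proposition~\ref{prop:twop}. For $m$ sufficiently large the $O_r(1)$ pigeonhole loss is absorbed into an arbitrarily small slack in the exponent, so the hypothesis $\prod_j |A_j|\geq m^{1+\beta}$ (respectively its weighted analogue for $\omega\geq 4$) transfers to the analogous bound on the $B_j$ with some $\beta'>0$ close to $\beta$. Corollary~\ref{lem:tool} for $\omega=2$, the Shkredov estimate (Theorem~\ref{thm:Shkredov}) combined with \eqref{eqn:formula} for $\omega=3$, and Corollary~\ref{cor:shk} for $\omega\geq 4$, then yield the stated bounds on $s$. The main technical obstacle is ensuring that the multiplier $\vec\tau$ does not depend on $i$: this forces us to commit to a single exponent $e_j$ per block $j$ (one cannot freely mix first and second powers of $p_{i,j}$ across different summands), which is precisely what motivates the pigeonhole step and costs an additional factor of $2$ per block that is harmless for large $m$.
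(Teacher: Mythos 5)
Your proof is correct and follows the same route as the paper: restrict to primes of a fixed sign class so that $a(\cdot)$ collapses to a unit multiple of $a_1(\cdot)^d$, absorb the $O_r(1)$ pigeonhole loss into a slightly smaller $\beta'$, and feed the resulting Waring-type product congruence into the exponential-sum machinery behind Proposition~\ref{prop:twop}. The one point you make explicit that the paper's terse proof leaves implicit is that one must commit to a single exponent $e_j$ per block $j$ so that the unit multiplier $P(\vec\tau)$ is the same for every summand $n_i$ — without this the ratio $a(n_i)/\prod_j a(p_{i,j}^{e_{i,j}})$ would vary with $i$ — and your pigeonhole on exponents, costing a harmless extra factor of $2$ per block, is a valid and worthwhile clarification of that multiplicativity step.
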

\begin{proof}
Note that for any $p\in \mathcal{S}_{\vec{\mathrm{sign}},\vec{f},m},$ we have
$$a_i(p^2)=a_j(p^2),~\forall~1\leq i,j\leq r.$$
In particular, for any such prime $p$, 
$$a(p)=P(\vec{\sigma})a_1(p)^d,~a(p^2)=P(1,1,\cdots, 1)a_1(p^2)^d,$$
for some $\vec{\sigma}\in \{\pm 1\}^{r}$ and $d$ is the degree of $P.$ Since $\#A_1\#A_2\cdots \#A_{\omega}\geq m^{1+\beta},$ for a particular type of sign-tuple $\vec{\sigma}:=(\sigma_1,\sigma_2,\cdots, \sigma_r)$, we have
$$\#A'_1\#A'_2\cdots \#A'_{\omega}\geq \frac{m^{1+\beta}}{2^r}, \ \omega=2,3$$
$$\#A'_1\#A'_2(\#A'_3\cdots \#A'_{\omega})^{\frac{1}{81}}\geq \frac{m^{1+\beta}}{2^r}, \ \omega\geq 4,$$
where $A'_i=\{a(p),~a(p^2) \pmod {m}\mid p\in S_i\cap \mathcal{S}_{\vec{\sigma},\vec{f},m}\}.$ The proof now follows from Proposition~\ref{prop:twop}, for any $\beta'$ and sufficiently large $m$ satisfying $m^{\beta-\beta'}\geq 2^r$.
\end{proof}
\subsection{Proof of the main results}
To prove Theorem~\ref{thm:main}, we need an explicit value of $\beta$ in (\ref{eqn:beta}). We shall obtain this by the known explicit bounds for the sum-product problems over finite fields. For instance, suppose that $m:=\ell$ is a prime and $A\subset \mathbb{F}_{\ell}$ is a small set. Then the problem is to find $\beta>0$ for which 
$$ \mathrm{max}\{|A+A|,|A\cdot A|\}\gg |A|^{1+\beta}.$$
Garaev in \cite{Garaev07} showed that $\beta$ could be taken to be $1/14,$ then Rudnev in \cite{Rudnev12} improved it to $1/11$ and the most optimal $\beta,$ according to the best of our knowledge is given by $1/5.$ This is a result of Roche-Newton, Shkredov, and Rudnev in \cite{Rudnev16}.

\subsection{Proof of Theorem~\ref{thm:main}} 
For the proof of part $(a)$, note that $\#\{a(p)\pmod L,~p\in S_1,p\leq m^{\epsilon}\}~\mathrm{or}~\#\{a(p^2)\pmod L,~p\in S_1,p\leq m^{\epsilon}\}\gg_{S_1} m^{\frac{\epsilon}{2}+o(1)}\geq L^{\frac{\epsilon}{2}+o(1)}.$ Consider the set with a larger size and set it as $A_1.$ Similarly, $\#\{a(p)\pmod L,~p\in S_2,p\leq m^{\epsilon}\}~\mathrm{or}~\#\{a(p^2)\pmod L,~p\in S_2,p\leq m^{\epsilon}\}\gg_{S_2} m^{\frac{\epsilon}{2}+o(1)}\geq L^{\frac{\epsilon}{2}+o(1)}$, and denote the larger one as $A_2.$ Now we use \cite[Theorem 6]{Rudnev16} to both of the sets $A_1$ and $A_2$. We have set $A'_1$, which is one of the $A_1\cdot A_1$ or $A_1+A_1$, and a set $A'_2$, which is one of the $A_2\cdot A_2$ or $A_2+A_2$, satisfying that
$$\#A'_1\gg_{S_1} L^{\frac{\epsilon(1+\beta)}{2}+o(1)},~A'_2\gg_{S_2} L^{\frac{\epsilon(1+\beta)}{2}+o(1)},$$
when $L$ is sufficiently large say  $L^{\gamma}\geq L'$. Where $L'=\frac{m}{L}$ with $(L,L')=1.$ We have $$\#A'_1.~\#A'_2 \gg_{S_1,S_2} L^{\epsilon(1+\beta)+o(1)}=L^{(1+\gamma)(1+\beta')}\geq (LL')^{1+\beta'}=m^{1+\beta'+o(1)},$$ where $\beta'=\frac{\epsilon(1+\beta)}{1+\gamma}-1+o(1)$. On the other hand, realizing $A'_1$ and $A'_2$ as subsets of $\Z/m\Z$ under the natural inclusion $\Z/L\Z \hookrightarrow \Z/m\Z$, the proof now follows from Corollary~\ref{lem:tool} for any $s=4\left( \lfloor\frac{2}{\beta'}\rfloor +1\right)$. This is because any element in $A'_1\cdot A'_2$ is of the form $a(n_1)+a(n_2)+a(n_3)+a(n_4)$, with $n_1,n_2,n_3,n_4\leq m^{4\epsilon}$, each $n_i$ has one prime factor from $S_1$, and one prime factor from $S_2$. Furthermore, note that $s$ is indeed a computable constant depending on $\varepsilon$ and $\gamma$, as the reader can any plug any known values of $\beta$, mentioned in the preamble of this section.

Now, for a proof of part $(b),$ it follows from Lemma~\ref{lem:explimage} that all any two $f_i,f_j$ differ by a quadratic character. In particular, the result now follows the same argument as in the proof of Corollary~\ref{cor:cuspcase} by taking $d=1$ and $P=\sum c_ix_i$.\\

\begin{remark}\rm
Substituting $\beta=\frac{1}{5}, \ \epsilon=\frac{65}{66}$ and, $\gamma=\frac{1}{77}$ we get $s=52,n_i\leq\frac{260}{66},\forall 1\leq i\leq 52$, and recover Corollary~\ref{thm:main0} from Theorem~\ref{thm:main}.
\end{remark}

\subsection{Proof of Theorem~\ref{thm:main2}}
Let us first prove $(a).$ Take $\varepsilon>0$ be any given real, and $S_1, S_2,\cdots, S_{\omega}$ be any pairwise disjoint set of primes of positive density, with $\varepsilon(2+\frac{\omega-2}{81})>2$. We studied the case $\omega=2$ in Theorem~\ref{thm:main}. The proof for $\omega>2$ case follows similarly. It follows from Lemma~\ref{lem:sizehecke} that for each $1\leq i\leq \omega,$ we have
$\#\{a(p)\pmod{L},p\in S_i,p\leq m^{\varepsilon}\}~\mathrm{or}~\#\{a(p^2)\pmod{L},p\in S_i,p\leq m^{\varepsilon}\}\gg_{S_i} L^{\varepsilon/2+o(1)}.$ Denote $A_i$ to be one of the corresponding sets with larger cardinality, we have
$$\#A_1\#A_2 (\prod_{3\leq i\leq \omega} \#A_i)^{1/81}\gg_{S_1,S_2,\cdots, S_{\omega}} L^{\varepsilon(2+\frac{\omega-2}{81})/2+o(1)}.$$
Denoting $\beta=\varepsilon(2+\frac{\omega-2}{81})/2+o(1)-1$ (which is positive by the assumption on $\omega$) and writing $m=LL'$ with $(L,L')=1,$ we have 
$$L^{1+\beta}\geq m^{1+\beta'},$$
for any $\beta'$ satisfying $L^{\frac{\beta-\beta'}{1+\beta'}}>m/L.$ The result now follows from Proposition~\ref{prop:twop} for $s>\frac{(1+\gamma)8\log(8/\beta)+4}{\beta'}, \ \omega=3$ and $s>\frac{2^{\omega}}{0.45\beta'}, \ \omega\geq4,$ where $\beta'=\frac{\beta-\gamma}{\gamma+1}$.\\
Proceeding similar to the proof of part (b) of the previous proof, we get the part (b) of this theorem.\\
\begin{remark}\rm
We now list the explicit values in the following table, obtained from Theorem~\ref{thm:main2}.
\begin{center}
		\begin{tabular}{ p{2cm} p{2cm}p{2.5cm}p{1.5cm}}
			\hline
$\omega$ & $\epsilon$ & $\gamma$ &
$s$ \\
\hline

$21$\ & $0.9$ & $0.005$ & $2^{31}$ \\
$165$ & $0.5$ & $0.003$ & $2^{180}$ \\
$1461$ & $0.1$ & $0.0006$ & $2^{1478}$ \\
$16041$ & $0.1$ & $0.00006$ & $2^{16062}$\\ 
$161841$ &  $0.001$  &  $0.000006$  & $2^{161866}$ \\
$1619841$  &  $0.0001$ & $0.0000006$  & $2^{1619894}$\\
$16199841$  &  $0.00001$  &  $0.00000006$  & $2^{16199872}$\\
$161999841$  &   $0.000001$ &  $0.000000006$ & $2^{161999875}$\\ 
\hline

\end{tabular}\\
		\vspace{0.4cm}
		Table 1: Required number of terms for a given bound
	\end{center}

\end{remark}

\section{Further questions and remarks}
\subsection{Solution with primes} We are having some assumptions on the composite number $m$ in both Theorem~\ref{eq:main1} and Theorem~\ref{thm:main2}. We would like to see if it is possible to remove them. We also ask if it is possible to obtain a solution to the equation
$$\sum_{i=1}^{O(1)} a(p_i)\equiv a \pmod m,~p_i\leq m^{O(1)},$$
where each $p_i$ is a prime. Or at least, if $\{a(n)\pmod m\}_{\substack{\omega(n)=1\\n\leq m^{O(1)}}}$ is an additive basis for $\Z/m\Z.$ Recall that Bajpai, Garc\'{\i}a, and the first author studied this in \cite{BBG22}; however, their method does not give polynomial growth of the solutions. Note that we have obtained solutions with polynomial growth and $\omega$ many prime factors for certain $m$.\\

\subsection{Sum of the polynomial values} Following the arguments in Section~\ref{sec:wring} one can study solvability of the equation $\sum\limits_{i=1}\limits^{O(1)} a(n_i)^{d}\equiv a~\pmod{m},$ as remarked by Shparlinski in \cite{Shparlinski2005}. However, Proposition~\ref{prop:lowerbnd} is giving the hope that it is also possible to study $\sum\limits_{i=1}\limits^{O(1)} p(a(n_i))\equiv a~\pmod{m},$ for any polynomial $p(x)\in \Q[x].$ The only obstacle is that $p(f(n))$ may not be multiplicative for any multiplicative function $f(n).$ We also ask if there is a way to overcome this. Perhaps the most interesting situation is when $P$ is of degree $1.$ In that case, $a(n):=P(a_1(n),a_2(n),\cdots, a_r(n))$ is Fourier coefficient of some modular form.\\

\subsection{On a larger family of cuspforms} We expect that it is possible to work with a larger class of cuspforms in Proposition~\ref{prop:lowerbnd}, at least when $m$ is a prime $\ell$. We covered some other families in Corollary~\ref{cor:lbd}. In this section, we shall discuss our heuristics for extending these families. This is because we expect the following to hold, under some suitable conditions, perhaps. 

\begin{question}\label{qn:soln}
Let $\ell$ be any prime, and $L$ be its any power. Is it true that for any tuples $(c_i)_{1\leq i\leq r}\in \mathbb{F}_{L}^{r},~(a_i)_{1\leq i\leq r+1}\in \mathbb{F}_{L}^{r+1},$ the number of solutions to the equations
$$ c_1x_1^i+  c_2x_2^i +\cdots c_{r}x_{r}^i=a_i,~\forall 1\leq i\leq r+1,$$
is at most $O_r(1)?$
\end{question}
For instance, this is easily seen to be true when all the $c_i$ are the same, using Newton's identity. In general, we have a partial answer due to the following.

\begin{lemma}\label{lem:dim}
Let $\ell$ be any prime, and $L$ be its any power. For any $(c_1,c_2,\cdots,c_{r})\in \mathbb{F}_{L}^{r},$ consider $f_i=c_1 x_1^i+c_2x^i\cdots c_{r}x_{r}^i \in \mathbb{F}_{L}[x_1,x_2,\cdots x_{r}],$ and $V$ be the projective variety generated by $f_1,f_2,\cdots, f_{r}$. Then $\mathrm{dim}(V)=0$ provided that $\sum\limits_{i\in S} c_i\neq 0$ in $\mathbb{F_\ell}$ for any $S\subseteq\{1,2,\cdots,r\}.$
\end{lemma}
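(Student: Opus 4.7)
The plan is to prove the sharper statement that the projective variety $V$ is in fact empty (equivalently, the affine zero locus of $f_1,\dots,f_r$ in $\mathbb{A}^r$ reduces to the origin), which in particular yields $\dim V = 0$. The key idea is that every hypothetical projective solution forces a very rigid coincidence structure among the coordinates, and this structure is then ruled out by the subset-sum hypothesis via a Vandermonde determinant.

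Concretely, suppose for contradiction that $[x_1:\cdots:x_r]\in V$, so $(x_1,\dots,x_r)\neq 0$. First I would \emph{group the coordinates by common value}: let $y_1,\dots,y_s\in\mathbb{F}_L^{\times}$ be the distinct nonzero values that appear among $x_1,\dots,x_r$, set $S_l=\{j:x_j=y_l\}$ for $l=1,\dots,s$, and define the aggregated coefficients
$$d_l\;=\;\sum_{j\in S_l}c_j.$$
Since the projective point is nonzero, $s\ge 1$; since each $S_l$ is a nonempty subset of $\{1,\dots,r\}$, the subset-sum hypothesis guarantees $d_l\neq 0$ in $\mathbb{F}_L$. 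Under this regrouping, the condition $f_i(x)=0$ becomes
$$\sum_{l=1}^{s}d_l\,y_l^{i}\;=\;0,\qquad i=1,\dots,r.$$

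The main step is then to extract the square subsystem obtained by restricting to $i=1,\dots,s$ (which is legal because $s\le r$). Its coefficient matrix factors as
$$M\;=\;\bigl(y_l^{i}\bigr)_{1\le i,l\le s}\;=\;\operatorname{diag}(y_1,\dots,y_s)\cdot W,$$
where $W=(y_l^{\,i-1})$ is the usual Vandermonde matrix, so
$$\det M\;=\;\Bigl(\prod_{l=1}^{s}y_l\Bigr)\!\!\prod_{1\le j<l\le s}\!\!(y_l-y_j)\;\neq\;0,$$
since the $y_l$ are distinct and nonzero. Hence $(d_1,\dots,d_s)=0$, contradicting the subset-sum hypothesis.

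The only point requiring care is the extraction of the square Vandermonde subsystem; once one notices that the partition produces at most $r$ groups (so $s\le r$ and the first $s$ equations are always available), the rest is a one-line determinant computation. The subset-sum hypothesis is used in exactly one place, namely to certify that the aggregated coefficients $d_l$ are nonzero, which is precisely what makes the Vandermonde contradiction bite. Concluding, $V$ has no projective points and so $\dim V=0$.
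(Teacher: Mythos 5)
Your proof is correct, and it takes a genuinely different route from the paper's. The paper argues ideal-theoretically: it invokes the Newton-type identity $f_r - e_1 f_{r-1} + e_2 f_{r-2} - \cdots + (-1)^r e_r f_0 = 0$ (with $f_0 = \sum_i c_i$ and $e_j$ the elementary symmetric polynomials), which modulo $(f_1,\dots,f_r)$ yields $\bigl(\sum_i c_i\bigr)\,x_1\cdots x_r \in I$; primality of $I$ and $\sum_i c_i \neq 0$ force some $x_j\in I$, and the process is iterated over the remaining variables using the nonvanishing of the other subset sums, concluding that $I$ must be the irrelevant ideal $(x_1,\dots,x_r)$. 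You instead argue point-wise over the algebraic closure: given a hypothetical nonzero solution you bucket equal coordinates, aggregate the coefficients into $d_l = \sum_{j\in S_l}c_j$ (each nonzero by the subset-sum hypothesis), restrict to the square system $i=1,\dots,s$, and kill it with a nonsingular Vandermonde-times-diagonal matrix. The Vandermonde argument is more elementary and self-contained (no appeal to Newton's identities or prime ideals), while the paper's recursion is more in the spirit of the surrounding Lemma~\ref{lem:qtos} and might be convenient if one later wanted scheme-theoretic information about $(f_1,\dots,f_r)$; both correctly establish that the common projective zero locus is empty, which is the precise content behind the paper's informal ``$\dim V = 0$''.

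Two small remarks to keep the write-up clean: the factorization in your display is stated as $\operatorname{diag}(y_1,\dots,y_s)\cdot W$ with $W=(y_l^{\,i-1})$, but the correct order is $M = W\cdot\operatorname{diag}(y_1,\dots,y_s)$ (the diagonal matrix scales columns, not rows); this has no effect on the determinant computation. Also, as with the paper's statement, the subset-sum hypothesis should be read over nonempty $S$ — you correctly only ever apply it to the nonempty blocks $S_l$.
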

\begin{proof}
It is enough to prove that there is no non-trivial prime ideal $I$ in the coordinate ring of $V.$ Suppose there is such a non-trivial prime ideal $I$. It follows from the identity 
$f_r=\sum_{i=1}^{r} q_i f_{r-i}$ that $(\sum_{i=1}^{r} c_i)x_1x_2\cdots x_r$ in $I$, where $q_i$ is $i^{\text{th}}$ elementary symmetric polynomial in $x_1,x_2,\cdots, x_r$. Since $\sum_{i=1}^{r} c_i \neq 0,$ we may assume that $x_{r}\in I.$ Then repeating the same argument, and keeping in mind that any sum $\sum_{i\in S} c_i\neq 0,$ we have that $x_1,x_2,\cdots,x_{r}$ all are in $I.$ In particular, this shows that, 
$$I=(f_1,f_2,\cdots,f_{r})=(x_1,x_2,\cdots,x_{r}),$$
which completes the proof.
\end{proof}
\begin{remark}\rm
The condition $\sum\limits_{i\in S} c_i\neq 0$ is important. For instance, $\sum\limits_{1\leq i\leq r} c_i=0,$ implies that $V$ contains the variety $(x_1-x_r,x_2-x_r,\cdots x_{r-1}-x_r).$ In particular, $V$ is of dimension at least $1.$ 
\end{remark}
As a consequence of Lemma~\ref{lem:dim}, we have a positive answer to Question~\ref{qn:soln} when all the $a_i$ are equal. For proof, one may use \cite[Theorem 2.1]{LR15}. We shall now see how helpful it is to have a complete answer to Question~\ref{qn:soln}. Let $f_1,f_2,\cdots,f_r$ be any set of Hecke eigenforms and set 
$$ a_{P,\vec{f}}(n)=P(a_1(n),a_2(n),\cdots, a_r(n)),$$
where $P(x_1,x_2,\cdots, x_r)$ is a polynomial with $r$ number of varriables. 

\begin{corollary}
Let $\ell$ be any given prime, then for any set of primes $S,$ consider quantity,
$$N_{S,P,\vec{f}}(x)=\#\{  a_{P,\vec{f}}(n) \pmod {\ell} \mid  p~\mathrm{divides}~n\implies p\in S,~n\leq x\}.$$
Suppose that $S$ has positive density, then we have the following estimate for any sufficiently large prime $\ell$, and any $\delta>0$
$$N_{S,P,\vec{f}}(x)\geq \mathrm{min}\left\{\ell^{\frac{1}{2}-\delta},x^{\frac{1}{4r^2}-\delta}\right\}.$$
\end{corollary}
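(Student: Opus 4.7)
The proof is modeled on Proposition~\ref{prop:lowerbnd}, now applied to the composite sequence $a_{P,\vec{f}}(n) = P(a_1(n), \ldots, a_r(n))$ in place of a single linear combination of Hecke coefficients. Set $y := N_{S,P,\vec{f}}(x)$ and assume, toward a contradiction, that $y$ is smaller than the claimed lower bound. Fix the threshold $X := \min\{x^{1/(2r)}, \ell\}$ and iteratively apply the pigeon-hole principle to the values $a_{P,\vec{f}}(p^j) \pmod{\ell}$ for $j = 1, 2, \ldots, 2r$, over primes $p \in S$ with $p \leq X$. After $2r$ successive applications, this produces a subset $T$ with $|T| \gg_S \pi(X)/y^{2r}$, on which all $2r$ quantities are simultaneously prescribed modulo $\ell$.

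The next step converts these constraints into a system of polynomial equations. By Lemma~\ref{lem:explicitcoeffs} and Lemma~\ref{lem:qtos}, each $a_{P,\vec{f}}(p^j)$ is expressible as a polynomial in the $r+1$ quantities $a_1(p), \ldots, a_r(p), p^{k-1}$ over $\mathbb{Q}$. Hence each $p \in T$ yields a point on the projective variety $V \subseteq \mathbb{P}^{r}_{\overline{\mathbb{F}_\ell}}$ cut out by the $2r$ hypersurfaces $F_j - \alpha_j = 0$. Invoking Lemma~\ref{lem:dim} (and the broader content of Question~\ref{qn:soln}), one verifies that, under the non-degeneracy conditions on the polynomial $P$, the variety $V$ has dimension $0$, hence contains $O_r(1)$ points; in particular, the coordinate $p^{k-1} \pmod{\ell}$ is restricted to $O_r(1)$ residue classes.

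Since primes in $T$ satisfy $p \leq X \leq \ell$ and are therefore mutually distinct modulo $\ell$, we conclude $|T| = O_r(1)$. Combined with the pigeon-hole bound, this yields $y^{2r} \gg_{S} \pi(X) \gg X^{1 - o(1)}$, i.e.\ $y \gg X^{1/(2r) - o(1)}$. Plugging back $X = \min\{x^{1/(2r)}, \ell\}$ gives $y \gg \min\{x^{1/(4r^2) - \delta}, \ell^{1/(2r) - \delta}\}$. To sharpen the second term to $\ell^{1/2 - \delta}$, I would rerun the argument with only $J = 2$ pigeon-hole steps (on $a_{P,\vec{f}}(p)$ and $a_{P,\vec{f}}(p^2)$): after eliminating $p^{k-1}$ via the Hecke recurrence $a_i(p^2) = a_i(p)^2 - p^{k-1}$, the resulting polynomial identity has level sets of bounded size, yielding $y \gg X^{1/2 - o(1)}$ in the spirit of Lemma~\ref{lem:sizehecke}, and hence $y \gg \ell^{1/2 - \delta}$ in the large-$x$ regime.

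The principal obstacle is the dimension-zero conclusion on $V$. A complete and uniform such verification is precisely Question~\ref{qn:soln}; currently only a partial case is handled by Lemma~\ref{lem:dim} under the subset-sum condition $\sum_{i \in S'} c_i \not\equiv 0 \pmod{\ell}$ for every $S' \subseteq \{1, \ldots, r\}$. For polynomials $P$ of higher degree or with general coefficient patterns, a finer resultant or elimination-theoretic analysis is needed to rule out spurious positive-dimensional components of $V$, and this degeneration is exactly what separates an effectively computable bound from the formally stated one.
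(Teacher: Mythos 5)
Your proposal follows the same overall pigeonhole scaffolding as Proposition~\ref{prop:lowerbnd} and, like the paper, is explicitly conditional on Question~\ref{qn:soln}, so in that respect you have correctly identified the shape of the argument and its dependency. Where you diverge is the middle step: the paper writes $a_i(p^n) \equiv c_i\alpha_i^n + d_i\beta_i^n \pmod{\ell}$, so that fixing $(a_{P,\vec f}(p),\ldots,a_{P,\vec f}(p^{r+1}))$ becomes the power-sum system $\sum_j c'_j x_j^n = a_n$ in the variables $x_j = \prod_i\alpha_i^{t_i}\beta_i^{d_i-t_i}$, and Question~\ref{qn:soln} is invoked directly to say these $x_j$ take only $O(1)$ values; the product over all $j$, together with $\alpha_i\beta_i \equiv p^{k_i-1}$, then forces a positive power of $p$ into $O(1)$ residue classes. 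You instead work in the ``coefficient'' variables $a_1(p),\ldots,a_r(p),p^{k-1}$ and argue that the constraints cut out a zero-dimensional variety $V$. Two caveats on your version: (i) the cited Lemma~\ref{lem:explicitcoeffs} and Lemma~\ref{lem:qtos} are tailored to $a(n)=\sum_i a_i(n)$, not to a general homogeneous $P$; what you actually use is only the basic Hecke recurrence, and the variety $V$ you describe is affine in $r+1$ variables (not projective in $\mathbb{P}^r$), cut out by polynomials that are not homogeneous. (ii) Your dimension-zero claim for $V$ is strictly stronger than what Question~\ref{qn:soln} asks (Question~\ref{qn:soln} concerns power sums $\sum c_j x_j^i$, which is what the paper's change of variables produces; your $F_j$ are more general), so you would need a more general elimination argument — you flag this honestly, but it is worth noting that the paper's route reduces to Question~\ref{qn:soln} more directly. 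Finally, the proposed sharpening to $\ell^{1/2-\delta}$ by rerunning with $J=2$ steps does not go through for $r>1$: fixing $a_{P,\vec f}(p)$ and $a_{P,\vec f}(p^2)$ does not let you eliminate $p^{k-1}$ from a single polynomial relation, since you cannot recover the individual $a_i(p)$ from $P(a_1(p),\ldots,a_r(p))$; the $\ell^{1/2-\delta}$ cap in the statement just reflects that the pigeonholed primes satisfy $p\le\ell$, so $X$ saturates at $\ell$.
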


\begin{proof}
Let us start with writing $a_i(p^n) \pmod{\ell}=c_i\alpha_i^n+d_i\beta_i^n,$ where $\alpha_i,\beta_i\in \mathbb{F}_{\ell^2}$, and suppose that $P$ is a homogeneous polynomial of degree $d$. Then $a_f(p^n)$ is a linear combination of 
$\left\{ \prod_{i=1}^{r}\alpha_i^{nt_i}\beta_i^{n(d_i-t_i)} \right\}_{\substack{0\leq t_i\leq d_i\\ \sum d_i=d}}.$ Let $\vec{d}=(d_1,d_2,\cdots, d_r)$ appear as degrees of a monomial in $P$. For a fixed tuple $(a_1,a_2,\cdots,a_{r+1}) \in \mathbb{F}_{\ell}^{r+1},$ let us now consider the number of primes $p$ for which $$(a_{P,\vec{f}}(p),a_{P,\vec{f}}(p^2) \cdots, a_{P,\vec{f}}(p^{r+1}))\pmod {\ell}=(a_1,a_2,\cdots, a_{r+1}).$$ It follows from our expectation in Question~\ref{qn:soln} that, $\left\{ \prod_{i=1}^{r}\alpha_i^{t_i}\beta_i^{d_i-t_i} \right\}_{\substack{0\leq t_i\leq d_i\\ \sum d_i=d}},$ is $O(1).$ In particular, 
$$\prod_{\substack{\sum d_i=d\\}} \prod_{i=1}^{r}\prod_{0\leq t_i\leq d_i}\alpha_i^{t_i}\beta_i^{d_i-t_i}=O(1).$$
Recall that $\alpha_i\beta_i=p^{k_i-1}\pmod{\ell}$, and hence
$$ \prod_{\substack{\sum d_i=d}} \prod_{i=1}^{r}\prod_{0\leq t_i\leq d_i}\alpha_i^{t_i}\beta_i^{d_i-t_i}=\prod_{\substack{\sum d_i=d}} \prod_{i=1}^{r-1}p^{(k_i-1)\frac{d_i(d_i+1)}{2}} \pmod{\ell}=O(1).$$
This is impossible since $S$ is infinite, and any $k_i-1$ is strictly positive. In particular, this shows that the number of primes $p$ for which $$(a_{P,\vec{f}}(p),a_{P,\vec{f}}(p^2) \cdots, a_{P,\vec{f}}(p^{r+1}))\pmod {\ell}=(a_1,a_2,\cdots, a_{r+1})$$
is $O(1).$ The proof now follows, arguing similarly as in the proof of Proposition~\ref{prop:lowerbnd}
\end{proof}

\section*{Acknowledgments}
We want to thank the Georg-August-Universit\"at G\"ottingen, Germany, for providing excellent working conditions. Most of the works for this project took place at IISER TVM, India, during the first author's visit. We are sincerely grateful for the hospitality and financial support of IISER. 

The first author is supported by ERC Consolidator grant
648329, and the fellowship of IISER TVM. The third author would like to thank CSIR for the financial support.

\end{document}